\newcommand{\iu}{{i\mkern1mu}}
\providecommand{\Div}{\operatorname{div}}          
\providecommand{\Det}{\operatorname{det}}                    
\newcommand{\Va}{{\mathbf{a}}}
\newcommand{\Ve}{{\mathbf{e}}}
\newcommand{\Vf}{{\mathbf{f}}}
\newcommand{\Vg}{{\mathbf{g}}}
\newcommand{\Vh}{{\mathbf{h}}}
\newcommand{\Vk}{{\mathbf{k}}}
\newcommand{\Vp}{{\mathbf{p}}}
\newcommand{\Vu}{{\mathbf{u}}}
\newcommand{\Vv}{{\mathbf{v}}}
\newcommand{\Vx}{{\mathbf{x}}}
\newcommand{\Vy}{{\mathbf{y}}}
\newcommand{\Vz}{{\mathbf{z}}}
\newcommand{\VD}{{\mathbf{D}}}
\newcommand{\VE}{{\mathbf{E}}}
\newcommand{\VI}{{\mathbf{I}}}
\newcommand{\VR}{{\mathbf{R}}}
\newcommand{\VS}{{\mathbf{S}}}
\newcommand{\VT}{{\mathbf{T}}}
\newcommand{\VV}{{\mathbf{V}}}
\newcommand{\VX}{{\mathbf{X}}}
\newcommand{\nubf}{\boldsymbol{\nu}}
\newcommand{\phibf}{\boldsymbol{\phi}}
\newcommand{\varphibf}{\boldsymbol{\varphi}}
\newcommand{\Ksf}{\mathsf{K}}
\newcommand{\ksf}{\mathsf{k}}
\providecommand{\Ca}{{\cal A}}
\providecommand{\Cb}{{\cal B}}
\providecommand{\Ce}{{\cal E}}
\providecommand{\Ch}{{\cal H}}
\providecommand{\Ci}{{\cal I}}
\providecommand{\Cp}{{\cal P}}
\providecommand{\Cx}{{\cal X}}
\newcommand{\overbar}[1]{\mkern 1.5mu\overline{\mkern-1.5mu#1\mkern-1.5mu}\mkern 1.5mu}
\DeclareMathOperator{\id}{\mathbf{id} }
\newcommand{\beqn}{\begin{eqnarray*}}
\newcommand{\eeqn}{\end{eqnarray*}}
\newcommand{\ben}{\begin{equation}}
\newcommand{\een}{\end{equation}}
\newcommand{\beq}{\begin{eqnarray}}
\newcommand{\eeq}{\end{eqnarray}}
\newcommand{\benn}{\begin{equation*}}
\newcommand{\eenn}{\end{equation*}}
\newcommand{\defeq}{\vcentcolon=}
\newcommand{\weakto}{\rightharpoonup}
\newcommand{\Dr}[1]{\ensuremath{\operatorname{D}\!{#1}}}
\newcommand{\dr}[1]{\ensuremath{\operatorname{d}\mathbf{\!{#1}}}}
\newcommand{\sym}{\mathrm{sym}}
\newcommand{\asym}{\mathrm{asym}}
\DeclareMathOperator{\tr}{{tr}}
\newcommand{\CR}{\mathrm{CR}}
\newtheorem{remark}[theorem]{Remark}
\newtheorem{problem}[theorem]{Problem}
\numberwithin{theorem}{section}
\newcommand{\TheTitle}{Shape optimisation with nearly conformal transformations}
\newcommand{\TheShortTitle}{Shape optimisation with nearly conformal transformations}
\newcommand{\TheAuthors}{K.~Sturm, F.~Wechsung and J.~A.~Iglesias}
\headers{\TheShortTitle}{\TheAuthors}
\title{{\TheTitle}\thanks{Submitted to the editors \today.
\funding{This publication is based on work partially supported by the EPSRC Centre For Doctoral Training in Industrially Focused Mathematical Modelling (EP/L015803/1) as well as the EPSRC Grant EP/K030930/1.}
}}
\author{Jos\'e A. Iglesias\thanks{Radon Institute for Computational and Applied Mathematics, Austrian Academy of Sciences, Altenberger Strasse 69, 4040 Linz, Austria, \email{jose.iglesias@ricam.oeaw.ac.at}}
  \and
  Kevin Sturm
 \thanks{Radon Institute for Computational and Applied Mathematics, Austrian Academy of Sciences, Altenberger Strasse 69, 4040 Linz, Austria, \email{kevin.sturm@oeaw.ac.at}}
 \and Florian Wechsung
   \thanks{Mathematical Institute, University of Oxford, Andrew Wiles Building, Radcliffe Observatory Quarter, Woodstock Road, Oxford, OX2 6GG. \email{wechsung@maths.ac.ox.uk}}
}
\begin{document}

\maketitle

\begin{abstract}
    In shape optimisation it is desirable to obtain deformations of a given mesh without negative impact on the mesh quality.
We propose a new algorithm using least square formulations of the Cauchy-Riemann equations. Our method allows to deform meshes in a nearly conformal way and thus approximately preserves the angles of triangles during the optimisation process.  The performance of our methodology is shown by applying our method to some unconstrained shape functions and a constrained  Stokes shape optimisation problem. 
\end{abstract}

\begin{keywords}
PDE constrained shape optimisation, grid deformation, Cauchy-Riemann equations, reproducing kernel Hilbert spaces, conformal mappings
\end{keywords}

\begin{AMS}
49Q10, 93B40, 65Dxx, 46E22
\end{AMS}

\section{Introduction}\label{sec:1}
Shape optimisation is concerned with the minimisation of \emph{shape functions} $J$ defined on subsets $\Ca \subset \wp(\VR^d)$  of the power set of $\VR^d$. The set $\Ca$ is called the \emph{admissible set}  and its elements are referred to as \emph{shapes}. 

In many engineering applications shape optimisation techniques can be applied to enhance the behaviour of a physical system.  For instance it can be used to optimise physical quantities such as the drag of an airfoil \cite{ESSI2007,SCSCILGA2011,PhDSchmidtShape} or car \cite{KAetal2016}. 
Most shape functions originating from engineering applications are constrained by at least one partial differential equation (PDE) and therefore this type of shape optimisation problem is referred to as \emph{PDE constrained shape optimisation} problem. A generic PDE constrained shape optimisation problem is of the form
\begin{equation}
    \min_{\Omega\in \Ca} J(\Omega,u) \quad \text{ subject to }  \Ce_\Omega(u)=0,
\end{equation}
where $\Ce_\Omega:\VE(\Omega) \to (\VE(\Omega))^\ast$ represents the PDE constraint defined on a Banach space $\VE(\Omega)$ with values in its dual $(\VE(\Omega))^\ast$ \cite{SOZO92,DEZO11}. 

Shapes can be represented in different ways, e.g. by describing its boundary by level sets, polynomials, NURBS, radial basis functions, polygons and others \cite[Chapter 2]{DEZO11}.
We employ the \emph{method of mappings}.
Given an initial domain $\Omega_0$ we search for deformations $\VT(\Omega)$, i.e. we optimise over 
\begin{equation}
        \{\VT:\Omega_0 \to\VR^d : \VT = \id +\VX \text{ for } \VX\in\VV,\; \VT \text{ a homeomorphism onto its image}\}
\end{equation}
where $\VV$ is a certain vector space of functions.

Often the PDE constraint $\Ce_\Omega $ is discretised using the finite element method.
In such cases the domain $\Omega$ is replaced by an approximation $\Omega^h$ that consists of a regular triangulation.
This gives rise to a natural parametrisation of the shape by a finite element mesh/grid.
Then the optimisation process successively deforms the initial domain $\Omega^h$  by  applying transformations to the grid points.

In order to obtain an accurate solution to the PDE constraint, the geometry of the mesh is crucial; in many cases a mesh consisting of triangles / tetrahedra that are as close to unilateral as possible is desirable.
However, in general there is no guarantee that the deformed mesh $(\id + \VX)(\Omega_0^h) = \{\Vx+\VX(\Vx): \Vx\in \Omega_0^h\}$ is still regular, i.e. that cells have not become highly stretched or even overlap.
Therefore if remeshing is impractical, not possible or expensive it is important to find 'tame' deformation fields $\VX$ that deform the mesh 'as well as possible' while decreasing the cost function $J$ sufficiently well.

In \cite{schmidt2014twostage} a two stage approach is introduced: a 'good' deformation is calculated on the boundary first and then extended into the volume by solving a convection-diffusion equation.
In several papers the good properties of the linear elasticity equations have been observed for mesh deformation \cite{dwight2009robust, schulz2016computational}.
Here the shape derivative is essentially viewed as a force acting on an elastic medium.

The approach presented in this work is based on conformal mappings; this is the family of mappings that preserve angles.
This is a desirable property as this implies that mesh properties like isolateral elements remain constant under a conformal mapping.
We consider the two dimensional case as then conformality can be achieved by enforcing that the deformation satisfies the Cauchy-Riemann equations.
However, as the Cauchy-Riemann equations do not admit solutions for any given boundary data we consider various least square solutions instead. 
We briefly outline a possible extension to three dimensions at the end of the paper.

It is worth mentioning that Grid/Mesh deformations itself can be used for mesh generation; \cite{ST93}. 
One way of doing this is by solving an appropriate boundary value problem with prescribed Dirichlet conditions \cite{HE03} and \cite[pp.~117]{ST93}. 
Conformal variational grid generation is considered for instance in \cite{ZOHO89,SMER87}.

\subsection*{Structure of the paper}
In Section~\ref{sec:2} we gather basics on the Cauchy-Riemann equations and associated boundary value problems.
We recall the Riemannian mapping theorem and discuss when conformal mappings between shapes exist.
 
In Section~\ref{sec:3}, we introduce least square formulations of the Cauchy-Riemann equations and associated minimisation problems. 
We will prove uniqueness and existence of minimisers and we discuss their behaviour as conformality is enforced more strongly.

In Section~\ref{sec:4}, we discuss least square formulations in more regular spaces, namely reproducing kernel Hilbert spaces.
 
In Section~\ref{sec:5}, we describe the application of the developed methods to shape optimisation problems.
We do this by formulating shape optimisation problems on a linear space on which classical steepest descent or quasi Newton methods can be employed.
 
In Section~\ref{sec:6}, we discuss three numerical examples: a levelset example for which a conformal mapping between the initial and the final shape exists; a second example in which it is known that no such mapping exists and lastly we present the classical example of energy minimisation in Stokes flow.

\section{Conformal mappings}\label{sec:2}
\subsection{Classical Cauchy-Riemann equation}
Let $\Omega\subset \VR^2$ be a bounded domain. 
\begin{definition}
    We call a vector field $\VX = (X_1,X_2)\in [C^1(\Omega)]^2$ holomorphic if it satisfies the Cauchy-Riemann equations on $\Omega$,  
	\ben\label{eq:cauchy_riemann}
	\begin{split}
		\partial_x X_1 & =\partial_y X_2,\\
		\partial_y X_1 & =-\partial_x X_2.
	\end{split}
	\een
\end{definition}
Introducing the operator
\ben
\Cb := \begin{pmatrix}
	-\partial_x & \partial_y\\
	\partial_y & \partial_x
\end{pmatrix}, \;[C^1(\Omega)]^2 \to [C^0(\Omega)]^2,
\een
we can write \eqref{eq:cauchy_riemann} in the compact form $\Cb\VX =0$.  An injective and holomorphic mapping $F:\Omega \to \VR^2$ is called \emph{conformal}.
\begin{remark}
A vector field $\VX$ is holomorphic if and only if  $\hat\VX := X_1 + \iu X_2$ is complex differentiable on $\Omega$, where $\iu$ denotes the 
imaginary number defined by $\iu^2=-1$.
\end{remark}

\begin{definition}
We call the operator $\Cb$  the Cauchy-Riemann (CR) operator. 
\end{definition}
\begin{remark}\label{rem:holo_harmonic}
    Observe that the twofold application of the Cauchy-Riemann operator yields the vectorial Laplace operator $\Cb^2 := \Cb \circ \Cb = \VI_2\Delta$, where $\VI_2\in \VR^{2\times2}$ denotes the identity matrix in $\VR^2$.
    Therefore every holomorphic mappings is also harmonic, that is, the Laplacian of the function vanishes.  
\end{remark}
\begin{definition}[CR Dirichlet BVP]
    Given $\Vh\in [C(\partial\Omega)]^2$ the Cauchy-Riemann Dirichlet boundary value problem reads: find $\VX\in [C^1(\Omega)]^2\cap [C(\overline\Omega)]^2$ such that
	\begin{subequations}
		\label{eq:BVP_CR}
		\begin{align}
		\Cb \VX &  = 0 \quad \text{ in } \Omega, \\
		\VX& = \Vh \quad \text{ on } \partial\Omega.
		\end{align}
	\end{subequations}
\end{definition}

\begin{definition}[CR Neumann BVP]
    Given  $\Vg\in [C(\partial\Omega)]^2$ the Cauchy-Riemann Neumann boundary value problem reads: find $\VX\in [C^1(\overline\Omega)]^2$ such that
	\begin{subequations}
		\label{eq:BVP_CR_Neumann}
		\begin{align}
		\Cb \VX &  = 0 \quad \text{ in } \Omega, \\
		\partial\VX\nubf& = \Vg \quad \text{ on } \partial\Omega,
		\end{align}
		where $\nubf$ denotes the outward-pointing normal vector field along $\partial \Omega$. 
	\end{subequations}
\end{definition}

The boundary value problems \eqref{eq:BVP_CR} and \eqref{eq:BVP_CR_Neumann} do not have solutions in general. However, when the data $\Vh$ in the Dirichlet case and $\Vg$ in the Neumann case satisfy certain compatibility conditions then \eqref{eq:BVP_CR} and \eqref{eq:BVP_CR_Neumann} admit solutions, respectively.  In \cite{BE05} a closed formula for the solution of \eqref{eq:BVP_CR} and \eqref{eq:BVP_CR_Neumann} on the unit disc is given and necessary and sufficient conditions are provided.

\subsection{Bi-holomorphic transformations}
In this subsection we briefly discuss existence and non-existence of 
holomorphic mappings between two simply and doubly connected domains of $\VR^2$, respectively. 
\begin{definition}
Let $\Omega\subset \VR^2$ be open.  We call $\Vf:\Omega\to \Vf(\Omega)$ a bi-holomorphic mapping if $\Vf$ is holomorphic and injective with holomorphic inverse. 
\end{definition}

The set of bi-holomorphic functions may seem to be small, however, the Riemannian mapping theorem allows us to map every simply connected domain $\Omega\subset \VR^2$ to the unit disc
 $\VD\subset \VR^2$ via a bi-holomorphic map. The precise statement of the Riemann mapping theorem is as follows; see \cite[Thm.~4.2, p.~160]{CO78}. 

 \begin{theorem}
 Let $\Omega\subset \VR^2$ be a simply connected domain that is not the whole plane and let $\Va\in \Omega$. Then there exists a bi-holomorphic map $\Vf$ from $\Omega$ onto the unit disc $\VD\subset \VR^2$, such that $\Vf(\Va)=0$ and $\Vf'(\Va)>0$. 
\end{theorem}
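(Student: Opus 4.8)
The plan is to follow the classical normal-families (extremal) proof, producing $\Vf$ as the solution of a maximisation problem. Throughout I identify $\VR^2$ with $\comp$ and, via the Remark relating holomorphic vector fields to complex-differentiable functions, work with holomorphic functions of one complex variable. Let $\mathcal{F}$ denote the family of injective holomorphic maps $g\colon\Omega\to\VD$ with $g(\Va)=0$ and $g'(\Va)>0$. The first substantive step is to check $\mathcal{F}\neq\emptyset$: since $\Omega$ is not the whole plane, fix $\Vb\notin\Omega$; because $\Omega$ is simply connected and $z\mapsto z-\Vb$ is nonvanishing there, it admits a holomorphic square root $h$. One verifies that $h$ is injective and that $h(\Omega)\cap\bigl(-h(\Omega)\bigr)=\emptyset$, so $h(\Omega)$ omits an open disc; composing $h$ with a Möbius transformation carrying the complement of that disc into $\VD$, then with a disc automorphism and a rotation to normalise the value and derivative at $\Va$, yields a member of $\mathcal{F}$.

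Next I would solve the extremal problem $M:=\sup_{g\in\mathcal{F}}g'(\Va)$. Applying the Cauchy estimates on a fixed closed disc centred at $\Va$ and contained in $\Omega$ shows $M<\infty$, and by Montel's theorem $\mathcal{F}$ is a normal family, so a maximising sequence $g_n$ has a subsequence converging locally uniformly to some $\Vf$. Then $\Vf(\Va)=0$ and $\Vf'(\Va)=M>0$, so $\Vf$ is nonconstant; by Hurwitz's theorem $\Vf$ is injective, and by the open mapping theorem together with the maximum principle $\Vf(\Omega)\subset\VD$. Hence $\Vf\in\mathcal{F}$ and attains the supremum.

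The heart of the argument — and the step I expect to be the main obstacle — is proving that $\Vf$ is onto $\VD$. Arguing by contradiction, suppose $\Vf$ omits some $w_0\in\VD$. Then $\varphi=(\Vf-w_0)/(1-\overline{w_0}\Vf)$ maps $\Omega$ into $\VD$ without vanishing, and since $\Vf(\Omega)$ is simply connected (as the injective holomorphic image of a simply connected domain), $\varphi$ has a holomorphic square root $\psi\colon\Omega\to\VD$. Post-composing $\psi$ with the disc automorphism sending $\psi(\Va)$ to $0$ and a rotation making the derivative at $\Va$ positive produces $\tilde g\in\mathcal{F}$; a direct computation — in essence the strict form of the Schwarz lemma applied to the squaring map $\VD\to\VD$, which is not an automorphism — gives $\tilde g'(\Va)>M$, contradicting maximality. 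Therefore $\Vf(\Omega)=\VD$, and since an injective holomorphic map has holomorphic inverse (open mapping theorem / holomorphic inverse function theorem), $\Vf$ is the desired biholomorphism.

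Finally, the normalisation pins $\Vf$ down uniquely: if $\Vf_1,\Vf_2$ both satisfy the conclusion, then $\Vf_1\circ\Vf_2^{-1}$ is an automorphism of $\VD$ fixing $0$ with positive derivative at $0$, which by the Schwarz lemma must be the identity, so $\Vf_1=\Vf_2$. The delicate points to be careful about are the use of simple connectivity to extract holomorphic square roots (first in the nonemptiness step, then again in the surjectivity step) and the sign of the derivative estimate that yields the contradiction; everything else is standard complex analysis.
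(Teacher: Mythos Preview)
Your proof is correct and follows the classical extremal/normal-families argument. Note, however, that the paper does not actually supply a proof of this theorem at all: it is stated as background and attributed to \cite[Thm.~4.2, p.~160]{CO78}, so there is no ``paper's own proof'' to compare against. Your argument is precisely the standard one found in that reference, so in effect you have reproduced the cited proof.
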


%
%
%
%

\section{Nearly holomorphic shape gradients}\label{sec:3}

\subsection{Nearly holomorphic shape gradients}\label{sec:NCSG}
 Given a Lipschitz vector field $\VX\in [C^{0,1}(\overbar\Omega)]^2$, we denote by $\VT_t$ the perturbation of the identity $\VT_t :=\id + t\VX:\Omega \to \VT_t(\Omega) $. Using the Lipschitz property of $\VX$ we see that for all $\Vx,\Vy\in \Omega$, we have
 \ben
 |\Vx-\Vy| \le t\mathrm{Lip}(\VX)|\Vx-\Vy| +  |\VT_t(\Vx)-\VT_t(\Vy)|
 \een
 and therefore $T_t$ is an injective, Lipschitz continuous function with Lipschitz continuous inverse for all $|t| < \tau_\VX := 1/(\mathrm{Lip}(\VX))$.

\begin{definition}\label{def1}
    The shape derivative of $J$ at an admissible shape $\Omega$ in direction $\VX\in  [C^{0,1}(\overbar\Omega)]^d$ is defined by
    \begin{equation}
        \Dr J(\Omega)(\VX):= \lim_{t \searrow 0}\frac{J(\VT_t(\Omega))-J(\Omega)}{t}.
    \end{equation}
\end{definition}
Throughout we consider a shape function $J$ whose shape derivative exists for all $\VX\in [C^1(\Omega)]^2$ and is of the typical tensor form (cf. \cite{LaSt16})
\begin{equation}
\label{eq:shape_tensor_form}
    \Dr J(\Omega)(\VX) = \int_{\Omega} \VS_1:\partial \VX + \VS_0\cdot \VX \dr x,
\end{equation}
where $\VS_1\in [L^2(\Omega)]^{2\times2}$ and $\VS_0\in [L^2(\Omega)]^2$ are given functions. It follows from \eqref{eq:shape_tensor_form} that 
$\Dr J(\Omega)$ is a well-defined and continuous functional on $[H^1(\Omega)]^2$.

Let $(H,(\cdot,\cdot)_H)$ be a Hilbert space that is continuously embedded into $[H^1(\Omega)]^2$.
The \textit{shape gradient} of $J$ at $\Omega$ with respect to $(H,(\cdot,\cdot)_H)$ is then defined as the unique element $\nabla J (\Omega)\in H$ that satisfies 
\begin{equation}
    (\nabla J(\Omega), \Vv)_H = \Dr J(\Omega)(\Vv) \quad \text{ for all }\Vv \in H.
\end{equation}
There is a close relation between the shape gradient and steepest descent directions. Indeed it can be shown (see \cite{EigelSturm17}) that the negative normalized shape gradient $-\nabla J(\Omega)/\|\nabla J(\Omega)\|_{H}$ solves the minimisation problem
\begin{equation}
    \min_{\substack{\Vu\in H,\\ \|\Vu\|_{H}=1}} \Dr J(\Omega)(\Vu)
\end{equation}
and hence the negative shape gradient is actually the steepest descent direction. 
In addition let us notice that the shape gradient itself 
is the minimiser of 
\begin{equation}\label{eq:min_H_P}
    \min_{\Vu\in H} \frac12\|\Vu\|_H^2 - \Dr J(\Omega)(\Vu).
\end{equation}
In order to retain mesh quality, we propose to use shape gradients that satisfy the Cauchy-Riemann equations as well as possible; one way to achieve this is by 
replacing \eqref{eq:min_H_P} by the following optimisation problem 
\begin{equation} \label{eq:grad-optim-problem-with-strict-cr}
    \min_{\substack{\Vu\in H,\\ \Cb\Vu = 0}} \frac12\|\Vu\|_H^2 - \Dr J(\Omega)(\Vu), 
\end{equation}
where $\Cb$ corresponds to the Cauchy-Riemann equations using weak partial derivatives.
We will see later that the minimisation problem \eqref{eq:grad-optim-problem-with-strict-cr} is related to the Cauchy-Riemann Neumann  boundary value problem. But we already know that the Neumann boundary value problem does not always have solutions because the boundary data has to satisfy compatibility conditions.
Therefore it might happen that the solution of \eqref{eq:grad-optim-problem-with-strict-cr} does not yield a descent direction, even when $\Omega$ is not a stationary point.

In order to enlarge the shape deformation space, but still retain some conformality we propose to enforce the conformality constraint weakly by adding a penalty term. We will see later that in certain cases this approach yields strictly conformal mappings. 

Thus we consider the following regularised version of \eqref{eq:grad-optim-problem-with-strict-cr}, 
\begin{equation}\label{eq:min_abstract}
    \min_{ \Vu\in H} \frac12\left(\frac{1}{\alpha} \|\Cb \Vu\|_{\Cp}^2 + \|\Vu\|_H^2\right) - \Dr J(\Omega)(\Vu), \quad \alpha >0,
\end{equation}
where $\|\cdot\|_\Cp:\Cp\to\VR$ is a norm on a Hilbert space $\Cp$ that satisfies $\Cb(H)\subset \Cp$. Assuming that $\Cb:H\to \Cp$ is a continuous operator it is straightforward to show that \eqref{eq:min_abstract} has a unique minimiser. In addition 
 this minimiser is then the shape gradient of $J$ at $\Omega$  with respect to $H$ equipped with the inner-product 
\begin{equation}
    (\Vu, \Vv)_{\CR(\alpha)+H} \defeq \frac1\alpha(\Cb\Vu,\Cb\Vv)_{\Cp} + (\Vu, \Vv)_H.
\end{equation}
Subsequently we study the properties of the minimisers of \eqref{eq:min_abstract} for special choices of $H$ and $\Cp$. In particular we examine their behaviour for $\alpha\to0$.
\subsection{Nearly holomorphic shape gradients with $H=H^1$ and $\Cp=L^2$ }
A space of deformations that is frequently used is the space $H:=[\mathring{H}^1(\Omega)]^2$  consisting of all functions $\Vu\in [H^1(\Omega)]^2$ with mean zero, that is, $\int_\Omega \Vu\dr x = 0$.
This space becomes a Hilbert space when equipped with the inner product 
\begin{equation}
    (\Vu, \Vv)_{\mathring{H}^1} = (\partial \Vu, \partial\Vv)_{[L^2]^2}.
\end{equation}
In the setting of the previous section we let $\Cp = [L^2(\Omega)]^2$, define the norm $\|\Vv\|_{\Cp}:=\|\Vv\|_{[L^2]^2}$ and set $\Cb:[H^1(\Omega)]^2\to[L^2(\Omega)]^2$ to be the Cauchy-Riemann operator using weak derivatives.
Assuming $\VS_1$ from \eqref{eq:shape_tensor_form} is in $[H^1(\Omega)]^{2\times2}$ it can be shown (see \cite{LaSt16}) that the shape derivative \eqref{eq:shape_tensor_form} can be written as a boundary integral  
\begin{equation}
\Dr J(\Omega)(\Vv) = \int_{\partial\Omega} \Vg \cdot \Vv \dr s \quad \text{ for all } \Vv\in [\mathring{H}^1(\Omega)]^2,
\end{equation}
where $\Vg := (\VS_1\nubf\cdot \nubf)\cdot \nubf \in [H^{1/2}(\partial\Omega)]^2 \subset [L^2(\partial\Omega)]^2$.
The results below hold as well for other inner products and under the assumption that $\Dr J(\Omega)$ is merely an element of the dual space of $H$, i.e., they are compatible with using the volume formula of the shape derivative. 
\begin{problem}
For $\Vg\in [L^2(\partial\Omega)]^2$, 
    we study the following relaxation of \eqref{eq:grad-optim-problem-with-strict-cr}:
    find a minimiser $\Vu_\alpha\in [\mathring{H}^1(\Omega)]^2$ of 
    \begin{equation}\label{eq:min_L2_least_square}
        \min_{\Vu\in [\mathring{H}^1(\Omega)]^2} \frac12 \left(\|\Cb\Vu\|_{[L^2]^2}^2   + \alpha \|\partial\Vu\|_{[L^2]^2}^2\right)  - \alpha \int_{\partial \Omega} \Vg\cdot \Vu\; \dr s, \quad \alpha >0.
    \end{equation}
\end{problem}

\begin{lemma}\label{lem:existence_min_neumann}
    There exists a unique minimiser $\Vu_\alpha\in \mathring{H^1}(\Omega,\VR^2)$ of \eqref{eq:min_L2_least_square} and $\Vu_\alpha$ satisfies 
\begin{equation}
    (\Cb \Vu_\alpha,  \Cb \Vv)_{[L^2]^2} + \alpha (\partial \Vu_\alpha,\partial\Vv)_{[L^2]^{2\times2}} = \int_{\partial\Omega} \Vv\cdot\Vg\dr s
\end{equation}
for all $\Vv\in \mathring{H^1}(\Omega;\VR^2)$.  
If $\Vu_\alpha\in [C^2(\Omega)]^2\cap [C^1(\overbar \Omega)]^2$ then $\Vu_\alpha$ satisfies the corresponding Euler-Lagrange equations:
\begin{subequations}\label{eq:euler_lagrange_strong}
	\begin{align}
\Delta \Vu_\alpha & = 0 \quad \text{ in } \Omega,\\
 \partial \Vu_\alpha\nubf + \hat{\partial}\Vu_\alpha\nubf + \alpha \partial \Vu_\alpha\nubf & = \alpha \Vg \quad \text{ on } \partial \Omega, 
\end{align}
\end{subequations}
where
\ben
\hat{\partial}\Vu = \begin{pmatrix}
	-\partial_y u_2 &  \partial_x u_2 \\
				 \partial_y u_1	& - \partial_x u_1
\end{pmatrix}.
\een	
\end{lemma}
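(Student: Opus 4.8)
The plan is to establish existence and uniqueness by the direct method / Lax–Milgram, and then derive the Euler–Lagrange system by the usual variational argument together with integration by parts, using Remark~\ref{rem:holo_harmonic} to identify the interior equation.

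First I would set up the bilinear form
\[
a(\Vu,\Vv) \defeq (\Cb\Vu,\Cb\Vv)_{[L^2]^2} + \alpha(\partial\Vu,\partial\Vv)_{[L^2]^{2\times2}}
\]
on $H\defeq [\mathring H^1(\Omega)]^2$ and the linear functional $\ell(\Vv)\defeq \int_{\partial\Omega}\Vv\cdot\Vg\,\dr s$. Continuity of $a$ is immediate since $\Cb$ is a first-order constant-coefficient operator, hence bounded $[H^1]^2\to[L^2]^2$, and the second term is bounded by $\alpha\|\partial\Vu\|\|\partial\Vv\|$. Continuity of $\ell$ on $H$ follows from the trace theorem $[H^1(\Omega)]^2\to[H^{1/2}(\partial\Omega)]^2\hookrightarrow[L^2(\partial\Omega)]^2$ together with $\Vg\in[L^2(\partial\Omega)]^2$. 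Coercivity is the one genuinely substantive point: since $\|\Cb\Vu\|^2\ge 0$, one has $a(\Vu,\Vu)\ge\alpha\|\partial\Vu\|_{[L^2]^{2\times2}}^2$, and on $[\mathring H^1(\Omega)]^2$ the seminorm $\|\partial\,\cdot\,\|_{[L^2]^{2\times2}}$ is an equivalent norm by the Poincaré–Wirtinger inequality (zero mean). Lax–Milgram then gives a unique $\Vu_\alpha\in H$ with $a(\Vu_\alpha,\Vv)=\ell(\Vv)$ for all $\Vv\in H$, which is exactly the stated weak formulation; since $a$ is symmetric and positive definite, $\Vu_\alpha$ is simultaneously the unique minimiser of the quadratic functional $\tfrac12 a(\Vu,\Vu)-\alpha\ell(\Vu)$ appearing in \eqref{eq:min_L2_least_square} (the factor $\alpha$ in front of the boundary term is harmless; rescaling the functional by $1/\alpha$ turns it into $\tfrac12(\tfrac1\alpha\|\Cb\Vu\|^2+\|\partial\Vu\|^2)-\ell(\Vu)$, matching the abstract form \eqref{eq:min_abstract}).

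Next, for the strong form, I would assume the additional regularity $\Vu_\alpha\in[C^2(\Omega)]^2\cap[C^1(\overbar\Omega)]^2$ and integrate by parts in the weak identity. For the $\alpha$-term, $(\partial\Vu_\alpha,\partial\Vv)_{[L^2]^{2\times2}}=-\int_\Omega\Delta\Vu_\alpha\cdot\Vv\,\dr x+\int_{\partial\Omega}(\partial\Vu_\alpha\nubf)\cdot\Vv\,\dr s$. For the Cauchy–Riemann term, I would write $\Cb\Vu_\alpha$ componentwise and integrate each term by parts; the interior contribution assembles into $\Cb^2\Vu_\alpha=\VI_2\Delta\Vu_\alpha$ by Remark~\ref{rem:holo_harmonic}, so the two interior pieces combine to $-(1+\alpha)\int_\Omega\Delta\Vu_\alpha\cdot\Vv\,\dr x$; choosing $\Vv\in[C_c^\infty(\Omega)]^2$ (mean zero after subtracting its average, or directly since compactly supported test functions are dense enough here) forces $\Delta\Vu_\alpha=0$ in $\Omega$. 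The remaining boundary terms from the Cauchy–Riemann integration by parts must be collected carefully: the term $\int_\Omega\partial_x(X_1)\partial_x(v_1)+\ldots$ expansion produces, besides $-\Delta$, a boundary integrand which I expect to be precisely $(\partial\Vu_\alpha\nubf)\cdot\Vv+(\hat\partial\Vu_\alpha\nubf)\cdot\Vv$ with $\hat\partial$ as defined in the statement. Equating the total boundary integral to $\alpha\int_{\partial\Omega}\Vg\cdot\Vv\,\dr s$ for all admissible traces $\Vv$ yields the Robin-type condition $\partial\Vu_\alpha\nubf+\hat\partial\Vu_\alpha\nubf+\alpha\,\partial\Vu_\alpha\nubf=\alpha\Vg$ on $\partial\Omega$.

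The main obstacle I anticipate is the bookkeeping in the second step: correctly splitting $\|\Cb\Vu\|^2$ into the four scalar integrals $\int_\Omega|\partial_x X_1-\partial_y X_2|^2+|\partial_y X_1+\partial_x X_2|^2$, integrating by parts term by term, and verifying that the interior contributions telescope to $-(1+\alpha)\Delta\Vu_\alpha$ while the boundary contributions regroup exactly into the matrix $\hat\partial\Vu_\alpha$ acting on $\nubf$ — the sign pattern there (the off-diagonal $\pm\partial_x,\pm\partial_y$ entries) is where an error is easiest to make. One mild subtlety is that test functions in $[\mathring H^1(\Omega)]^2$ have the zero-mean constraint; since $\Dr J(\Omega)(\text{constant})=\int_{\partial\Omega}\Vg\cdot c\,\dr s$ need not vanish, the Euler–Lagrange equations hold only up to this compatibility, but this does not affect the pointwise PDE statements claimed (which are tested against compactly supported, resp. arbitrary boundary, variations), so I would simply note that the constant modes are handled by the mean-zero normalisation and do not enter \eqref{eq:euler_lagrange_strong}.
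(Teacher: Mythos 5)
Your proposal is correct and follows essentially the same route as the paper: the paper establishes existence and uniqueness by the direct method using the same key ingredient (Poincar\'e's inequality making $\Vu\mapsto\|\partial\Vu\|_{[L^2]^{2\times 2}}$ an equivalent norm on the mean-zero space), which for this symmetric coercive quadratic functional is interchangeable with your Lax--Milgram argument, and it likewise obtains \eqref{eq:euler_lagrange_strong} by integration by parts and the fundamental lemma of the calculus of variations. Your side remarks (the factor $\alpha$ on the boundary term, the mean-zero constraint on test functions) correctly flag small bookkeeping points that the paper's own proof passes over silently.
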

\begin{proof}
    It follows from Poincar\'e's inequality that $\Vu\mapsto \|\partial \Vu\|_{[L^2]^2}$ is a norm on $\mathring{H^1}(\Omega;\VR^2)$ that is equivalent to the norm $\Vu\mapsto \|\Vu\|_{H^1} := \sqrt{\|\Vu\|_{[L^2]^2} + \|\partial \Vu\|_{[L^2]^{2\times2}}}$. 
    This implies that the functional \eqref{eq:min_L2_least_square} is strictly convex.
    Furthermore, it is lower semi-continuous (lsc) and hence by convexity also weakly-lsc.
    Existence of a unique minimiser thus follows from the direct method of calculus of variations.

    The first order necessary and sufficient optimality condition of \eqref{eq:min_L2_least_square} reads: find $\Vu_\alpha\in [\mathring{H}^1(\Omega)]^2$ such that
 \ben\label{eq:euler_lagrange}
 \int_{\Omega} \Cb\Vu_\alpha\cdot \Cb\Vv + \alpha \partial \Vu_\alpha\colon \partial \Vv\dr x = \alpha\int_{\partial \Omega} \Vg\cdot \Vv \dr s\quad \text{ for all } \Vv \in [\mathring{H}^1(\Omega)]^2.
\een
 Partial integrating in \eqref{eq:euler_lagrange} and then using the fundamental theorem of calculus of variations leads to \eqref{eq:euler_lagrange_strong}. 
\end{proof}

In the next corollary we show that if $\Vg$ is compatible, that is, if the CR Neumann problem \eqref{eq:BVP_CR_Neumann} admits a solution, then this solution is also a minimiser of \eqref{eq:min_L2_least_square}.

\begin{corollary}\label{cor:l2-min-gives-conformal-when-exists}
    Let $\Vg\in [C(\partial \Omega)]^2$ be given. Suppose that \eqref{eq:BVP_CR_Neumann} admits a solution $\Vu \in [C^2(\Omega)]^2\cap [C^1(\overbar\Omega)]^2$. Then $\Vu$ (up to a constant) is the minimiser of \eqref{eq:min_L2_least_square}
and satisfies the Neumann problem
	\begin{subequations}
		\begin{align}
		\Delta \Vu & = 0 \quad \text{ in } \Omega,\\
		\partial \Vu\nubf & = \Vg \quad \text{ on } \partial \Omega.
		\end{align}
    \end{subequations}
\end{corollary}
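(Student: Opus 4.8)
The plan is to reduce the statement to the variational characterisation of the minimiser already obtained in Lemma~\ref{lem:existence_min_neumann}. Since the admissible class $[\mathring{H}^1(\Omega)]^2$ consists of mean-zero fields, I would first replace the given $\Vu$ by its normalisation $\tilde\Vu \defeq \Vu - |\Omega|^{-1}\int_\Omega \Vu \dr x$, which lies in $[\mathring{H}^1(\Omega)]^2$ since $\Vu\in[C^1(\overbar\Omega)]^2$ and $\Omega$ is bounded. Constant fields are annihilated by both $\Cb$ and $\partial$, hence $\Cb\tilde\Vu=\Cb\Vu=0$ and $\partial\tilde\Vu=\partial\Vu$; in particular $\tilde\Vu$ is again holomorphic, so harmonic by Remark~\ref{rem:holo_harmonic}, which already yields $\Delta\Vu=\Delta\tilde\Vu=0$ in $\Omega$. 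Together with the hypothesis $\partial\Vu\,\nubf=\Vg$ on $\partial\Omega$, this already establishes the Neumann system in the statement, and it remains to prove the first claim, that $\tilde\Vu$ is the minimiser of \eqref{eq:min_L2_least_square}.

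For this I would check that $\tilde\Vu$ solves the optimality equation~\eqref{eq:euler_lagrange}, which by Lemma~\ref{lem:existence_min_neumann} has $\Vu_\alpha$ as its unique solution, i.e.
\begin{equation*}
    \int_\Omega \Cb\tilde\Vu\cdot\Cb\Vv + \alpha\,\partial\tilde\Vu\colon\partial\Vv \dr x = \alpha\int_{\partial\Omega}\Vg\cdot\Vv \dr s \qquad \text{for all } \Vv\in[\mathring{H}^1(\Omega)]^2.
\end{equation*}
Since $\Cb\tilde\Vu=0$ the first term vanishes, so it suffices to establish $\int_\Omega \partial\Vu\colon\partial\Vv \dr x = \int_{\partial\Omega}\Vg\cdot\Vv \dr s$. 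This is Green's first identity applied componentwise: integrating by parts gives
\begin{equation*}
    \int_\Omega\partial\Vu\colon\partial\Vv\dr x = \int_{\partial\Omega}(\partial\Vu\,\nubf)\cdot\Vv \dr s - \int_\Omega\Delta\Vu\cdot\Vv\dr x,
\end{equation*}
where the boundary term equals $\int_{\partial\Omega}\Vg\cdot\Vv\dr s$ by the Neumann condition and the volume term vanishes by harmonicity. Hence $\tilde\Vu$ satisfies the system that uniquely characterises $\Vu_\alpha$, so $\Vu_\alpha=\tilde\Vu$, which is $\Vu$ up to a constant; note that this identification does not depend on $\alpha$.

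The one point requiring care is the integration by parts, since $\Vu$ is only $C^1$ up to the boundary whereas the test field $\Vv$ is merely $H^1$. I would first prove the identity for $\Vv\in[C^\infty(\overbar\Omega)]^2$, where the divergence theorem applies classically (assuming $\Omega$ is regular enough, e.g. with Lipschitz boundary, so that traces and the surface integral are meaningful), and then extend it to all $\Vv\in[\mathring{H}^1(\Omega)]^2$ by density: the left-hand side depends continuously on $\Vv$ in the $H^1(\Omega)$-norm because $\partial\Vu\in L^2(\Omega)$, and the right-hand side because the trace operator $H^1(\Omega)\to L^2(\partial\Omega)$ is bounded and $\Vg\in[L^2(\partial\Omega)]^2$. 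Beyond this the argument is a direct substitution into the optimality equation, so I do not expect any genuine obstacle.
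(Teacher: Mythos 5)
Your proof is correct and follows essentially the same route as the paper: both verify that the holomorphic solution of the CR Neumann problem satisfies the first-order optimality condition of \eqref{eq:min_L2_least_square}, using that $\Cb\Vu=0$ kills the Cauchy--Riemann term and that holomorphic implies harmonic. If anything, yours is slightly more careful, since you check the weak optimality condition \eqref{eq:euler_lagrange} directly (which is what the uniqueness statement of Lemma~\ref{lem:existence_min_neumann} actually applies to), justify the integration by parts by density, and handle the mean-zero normalisation explicitly, whereas the paper verifies the strong Euler--Lagrange system \eqref{eq:euler_lagrange_strong} and leaves these points implicit.
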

\begin{proof}
    If $ \Vu$ is a solution of \eqref{eq:BVP_CR_Neumann} then $\Delta  \Vu=0$ in $\Omega$ and also $\partial \Vu = -\hat{\partial }\Vu$ on $\partial \Omega$. Therefore $\Vu$ solves 	
    \begin{align}
        \Delta  \Vu & = 0 \quad \text{ in } \Omega,\\
        (\partial\Vu - \partial \Vu + \alpha \hat\partial \Vu)\nubf & = \alpha \Vg \quad \text{ on } \partial \Omega, 
    \end{align}
    and thus owing to Lemma~\ref{lem:existence_min_neumann} $\Vu$ is the unique minimiser (up to a constant) of \eqref{eq:min_L2_least_square}.
\end{proof}
The previous lemma only gives a characterisation of the minimiser of \eqref{eq:min_L2_least_square} in the case that the Cauchy-Riemann equations admit a
solution. In general the Cauchy-Riemann equations do not have solutions and as a result the minimisers cannot be holomorphic. However, we can show that
they are nearly holomorphic by splitting the space $[\mathring{H}^1(\Omega)]^2$ into 
a holomorphic and a non-holomorphic part.
In the following proposition we think of $H=[\mathring{H}^1(\Omega)]^2$ but $H$ could be any Hilbert space continuously embedded into $[H^1(\Omega)]^2$.
\begin{proposition}\label{prop:solution-decomposition}
Consider the orthogonal decomposition $H = V \oplus V^\perp $ for $V=\{\Vu : \Cb \Vu=0\}$.
    Furthermore let $\varphibf_{\Vg} $ be the unique solution to 
    \begin{equation}\label{eq:varphi_g}
        (\varphibf_{\Vg}, \Vv)_H = \int_{\partial\Omega} \Vg\cdot\Vv\dr s.
    \end{equation}
    Denote $\varphibf_{\Vg}= \varphibf_{\Vg}^{(1)}+ \varphibf_{\Vg}^{(2)}$ for $\varphibf_{\Vg}^{(1)}\in V$ and $\varphibf_{\Vg}^{(2)}\in V^\perp$ and decompose the solution  $\Vu_{\alpha} = \Vu_{\alpha}^{(1)} + \Vu_{\alpha}^{(2)}$ of \eqref{eq:min_L2_least_square} in the same way.  
    Then 
    \begin{itemize}
        \item[(i)] $\Vu_{\alpha}^{(1)} = \varphibf_{\Vg}^{(1)}$ for all $\alpha>0$.
        \item[(ii)] There exists a constant $C>0$ independent of $\alpha$ such that 
            \begin{equation}
                \|\Vu_{\alpha}^{(2)}\|_{H} \le \| \varphibf_{\Vg}^{(2)}\|_{H} \le C \| \Vg\|_{[L^2(\partial \Omega)]^2}.
            \end{equation}
        \item[(iii)] $\Vu_{\alpha}^{(2)}$ satisfies
            \begin{equation} \label{eq:u2-characterisation}
                (\Cb \Vu_{\alpha}^{(2)}, \Cb \Vv^{(2)} )_{[L^2]^2} + \alpha ( \Vu_{\alpha}^{(2)}, \Vv^{(2)})_{H} = \alpha \int_{\partial \Omega} \Vg\cdot \Vv^{(2)} \dr s
            \end{equation}
         for all  $\Vv^{(2)} \in V^\perp$.
         \item[(iv)] We have
         \begin{align}
         \Vu_{\alpha} & \to \varphibf_{\Vg}^{(1)}  && \text{ strongly in }\quad  H \text{ as } \alpha \to 0, \\
         \frac{1}{\sqrt \alpha}\Cb(\Vu_{\alpha}) &\to 0 && \text{ strongly  in } \quad [L^2(\Omega)]^2 \text{ as } \alpha \to 0.
         \end{align}
     \item[(v)] Lastly, 
         \begin{align}
         \Vu_\alpha  \to \varphibf_{\Vg}  && \text{ strongly in }\quad H \text{ as } \alpha \to \infty.
         \end{align}
    \end{itemize}
\end{proposition}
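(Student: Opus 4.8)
The plan is to work throughout from the weak Euler--Lagrange equation of Lemma~\ref{lem:existence_min_neumann}. Dividing it by $\alpha$, using that $(\partial\Vu_\alpha,\partial\Vv)_{[L^2]^{2\times2}}=(\Vu_\alpha,\Vv)_H$ and the defining relation \eqref{eq:varphi_g}, it becomes
\[
\tfrac1\alpha(\Cb\Vu_\alpha,\Cb\Vv)_{[L^2]^2}+(\Vu_\alpha,\Vv)_H=(\varphibf_{\Vg},\Vv)_H\qquad\text{for all }\Vv\in H.
\]
Testing with $\Vv=\Vw\in V$ kills the first term since $\Cb\Vw=0$, giving $(\Vu_\alpha,\Vw)_H=(\varphibf_{\Vg},\Vw)_H$ for all $\Vw\in V$, i.e. the $V$-components of $\Vu_\alpha$ and $\varphibf_{\Vg}$ coincide, which is (i). For (iii) I would insert an arbitrary $\Vv=\Vv^{(1)}+\Vv^{(2)}$, use $\Cb\Vu_\alpha=\Cb\Vu_\alpha^{(2)}$ and $\Cb\Vv=\Cb\Vv^{(2)}$ (because $\Cb$ annihilates $V$), expand the $H$-inner products along $V\oplus V^\perp$, and cancel the $V$-parts using (i); multiplying by $\alpha$ this is exactly \eqref{eq:u2-characterisation}, where one also uses $(\varphibf_{\Vg}^{(2)},\Vv^{(2)})_H=(\varphibf_{\Vg},\Vv^{(2)})_H=\int_{\partial\Omega}\Vg\cdot\Vv^{(2)}\dr s$.

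For (ii): test \eqref{eq:u2-characterisation} (equivalently its $1/\alpha$ form) with $\Vv^{(2)}=\Vu_\alpha^{(2)}$, which yields the identity $\tfrac1\alpha\|\Cb\Vu_\alpha^{(2)}\|_{[L^2]^2}^2+\|\Vu_\alpha^{(2)}\|_H^2=(\varphibf_{\Vg}^{(2)},\Vu_\alpha^{(2)})_H$; dropping the nonnegative first term and applying Cauchy--Schwarz gives $\|\Vu_\alpha^{(2)}\|_H\le\|\varphibf_{\Vg}^{(2)}\|_H$. The middle inequality is that the orthogonal projection onto $V^\perp$ is norm-nonincreasing, and the last one follows from $\|\varphibf_{\Vg}\|_H^2=\int_{\partial\Omega}\Vg\cdot\varphibf_{\Vg}\dr s\le\|\Vg\|_{[L^2(\partial\Omega)]^2}\|\varphibf_{\Vg}\|_{[L^2(\partial\Omega)]^2}$ together with the continuity of the trace $H^1(\Omega)\to L^2(\partial\Omega)$ and the embedding $H\hookrightarrow[H^1(\Omega)]^2$.

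For (iv), note that by (i) we have $\Vu_\alpha-\varphibf_{\Vg}^{(1)}=\Vu_\alpha^{(2)}$ and $\Cb\Vu_\alpha=\Cb\Vu_\alpha^{(2)}$, so it suffices to show $\Vu_\alpha^{(2)}\to0$ in $H$ and $\alpha^{-1/2}\Cb\Vu_\alpha^{(2)}\to0$ in $[L^2]^2$ as $\alpha\to0$. The energy identity from the previous step gives $\alpha^{-1}\|\Cb\Vu_\alpha^{(2)}\|_{[L^2]^2}^2\le(\varphibf_{\Vg}^{(2)},\Vu_\alpha^{(2)})_H\le\|\varphibf_{\Vg}^{(2)}\|_H^2$, which already proves the second claim and also yields $\|\Cb\Vu_\alpha^{(2)}\|_{[L^2]^2}\le\sqrt\alpha\,\|\varphibf_{\Vg}^{(2)}\|_H\to0$. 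Since $\{\Vu_\alpha^{(2)}\}$ is bounded in $H$ by (ii), any sequence $\alpha_k\to0$ has a subsequence along which $\Vu_{\alpha_k}^{(2)}\weakto\Vw$ in $H$; weak continuity of the bounded operator $\Cb:H\to[L^2]^2$ combined with $\Cb\Vu_{\alpha_k}^{(2)}\to0$ forces $\Cb\Vw=0$, i.e. $\Vw\in V$, while $\Vw\in V^\perp$ since $V^\perp$ is weakly closed, hence $\Vw=0$ because $V\cap V^\perp=\{0\}$. Therefore $\Vu_\alpha^{(2)}\weakto0$ in $H$, and then the energy identity upgrades this to strong convergence via $\|\Vu_\alpha^{(2)}\|_H^2\le(\varphibf_{\Vg}^{(2)},\Vu_\alpha^{(2)})_H\to0$. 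For (v), subtract $(\varphibf_{\Vg}^{(2)},\Vv^{(2)})_H$ from the $1/\alpha$ form of \eqref{eq:u2-characterisation} to obtain $(\Vu_\alpha^{(2)}-\varphibf_{\Vg}^{(2)},\Vv^{(2)})_H=-\tfrac1\alpha(\Cb\Vu_\alpha^{(2)},\Cb\Vv^{(2)})_{[L^2]^2}$ for all $\Vv^{(2)}\in V^\perp$; testing with $\Vv^{(2)}=\Vu_\alpha^{(2)}-\varphibf_{\Vg}^{(2)}\in V^\perp$ and bounding both $\Cb$-factors by $\|\Cb\|_{H\to[L^2]^2}$ times the $\alpha$-uniform $H$-norms from (ii) gives $\|\Vu_\alpha^{(2)}-\varphibf_{\Vg}^{(2)}\|_H^2\le C/\alpha\to0$, whence $\Vu_\alpha=\varphibf_{\Vg}^{(1)}+\Vu_\alpha^{(2)}\to\varphibf_{\Vg}^{(1)}+\varphibf_{\Vg}^{(2)}=\varphibf_{\Vg}$ in $H$.

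The only genuinely non-routine point is the $\alpha\to0$ limit in (iv): the uniform bounds do not directly give strong $H$-convergence, and the argument must first extract a weak subsequential limit, identify it as an element of $V\cap V^\perp=\{0\}$ using the weak closedness of $V^\perp$ and the weak continuity of $\Cb$, and only then invoke the energy identity to convert weak convergence into norm convergence. Everything else is bookkeeping with the orthogonal splitting $H=V\oplus V^\perp$.
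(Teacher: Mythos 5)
Your argument follows the paper's proof in all essentials: the same orthogonal splitting $H=V\oplus V^\perp$, the same restriction of the Euler--Lagrange equation to the two components to get (i) and (iii), the same energy identity obtained by testing with $\Vu_\alpha^{(2)}$, and the same weak-subsequential-limit argument for $\alpha\to0$. Two remarks. First, in (iv) you claim that the bound $\alpha^{-1}\|\Cb\Vu_\alpha^{(2)}\|_{[L^2]^2}^2\le(\varphibf_{\Vg}^{(2)},\Vu_\alpha^{(2)})_H\le\|\varphibf_{\Vg}^{(2)}\|_H^2$ ``already proves'' that $\alpha^{-1/2}\Cb\Vu_\alpha\to0$; as written it only proves boundedness of that quantity. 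The convergence to zero requires $(\varphibf_{\Vg}^{(2)},\Vu_\alpha^{(2)})_H\to0$, which you do obtain later in the same paragraph from $\Vu_\alpha^{(2)}\weakto 0$, so the conclusion is correct but must be drawn \emph{after} the weak-limit identification (this is also the order the paper uses); the decay $\|\Cb\Vu_\alpha^{(2)}\|_{[L^2]^2}\le\sqrt{\alpha}\,\|\varphibf_{\Vg}^{(2)}\|_H$ that you extract from the same bound is, by contrast, immediate and is what feeds your identification of the weak limit. Second, your treatment of (v) is a small genuine improvement: instead of extracting a weakly convergent subsequence and identifying its limit by uniqueness as the paper does, you subtract the two variational equations on $V^\perp$ and test with the difference, which together with the boundedness of $\Cb:H\to[L^2(\Omega)]^2$ and part (ii) yields the quantitative estimate $\|\Vu_\alpha^{(2)}-\varphibf_{\Vg}^{(2)}\|_H^2\le C/\alpha$ directly. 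Likewise your part (ii) actually establishes the intermediate inequality $\|\Vu_\alpha^{(2)}\|_H\le\|\varphibf_{\Vg}^{(2)}\|_H$ appearing in the statement, which the paper's proof bypasses in favour of the direct bound by $C\|\Vg\|_{[L^2(\partial\Omega)]^2}$ via the trace inequality.
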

\begin{proof}
    (i) First we observe that using the weak form of \eqref{eq:varphi_g} the optimality condition \eqref{eq:euler_lagrange} is equivalent to: find $\Vu_{\alpha}^{(1)}\in V$ and $\Vu_{\alpha}^{(2)}\in V^\perp$ such that 
    \begin{equation}\label{eq:opt_split}
        \begin{aligned}
            (\Cb \Vu_{\alpha}^{(2)}, \Cb \Vv^{(2)})_{[L^2]^2} &+ \alpha (\Vu_{\alpha}^{(1)}, \Vv^{(1)})_{H} + \alpha (\Vu_{\alpha}^{(2)},  \Vv^{(2)})_{H} \\
            & \qquad\qquad = \alpha (\varphibf_\Vg^{(1)}, \Vv^{(1)})_{H} + \alpha (\varphibf_\Vg^{(2)}, \Vv^{(2)})_{H}
        \end{aligned}
    \end{equation}
    for all $\Vv^{(1)}\in V$ and $\Vv^{(2)}\in V^\perp$. By testing \eqref{eq:opt_split} with $\Vv^{(2)}=0$ we obtain
    \begin{equation}\label{eq:opt_split2}
    \begin{aligned}
        (\Vu_{\alpha}^{(1)}-\varphibf_\Vg^{(1)}, \Vv^{(1)})_{H} 
   = 0
    \end{aligned}
    \end{equation}
    for all $\Vv^{(1)}\in V$. This means that $\Vu_{\alpha}^{(1)}-\varphibf_\Vg^{(1)}\in V^\bot$, but since by definition $\Vu_{\alpha}^{(1)}-\varphibf_\Vg^{(1)}\in V$ we conclude that $\Vu_{\alpha}^{(1)}-\varphibf_\Vg^{(1)}=0$ or $\Vu_{\alpha}^{(1)} = \varphibf_{\Vg}^{(1)}$. 
    
    (ii)-(iii) Using the weak form of \eqref{eq:varphi_g} and (i) we see that \eqref{eq:u2-characterisation} follows from \eqref{eq:opt_split}. 
    Testing \eqref{eq:u2-characterisation}  with $\Vv^{(2)}=\Vu_{\alpha}^{(2)}$ we obtain that 
    \begin{equation}
        \|\Vu_\alpha^{(2)}\|_{H}^2 \le  \|\Vg \|_{[L^2(\partial\Omega)]^2}\|\Vu_\alpha^{(2)}\|_{[L^2(\partial \Omega)]^2}
    \end{equation}
    and hence using Poincar\'e's inequality we see that there exists $C$ independent of $\alpha$, such that $\|\Vu_\alpha^{(2)}\|_{H} \le C \|\Vg\|_{[L^2(\partial \Omega)]^2}$ for all $\alpha$. 

    (iv) By (ii) we have that for every null-sequence $(\alpha_n)$, there exists $\Vu_0^{(2)}\in V^\perp$ and a subsequence $(\alpha_{n_k})$ of $(\alpha_n)$ such that $\Vu_{\alpha_{n_k}}^{(2)} \weakto \Vu_0^{(2)}$ (since $V^\perp$ is closed and convex and thus weakly closed).
    Therefore using \eqref{eq:u2-characterisation} we find
    \begin{equation}
        \begin{aligned}
            (\Cb \Vu_0^{(2)}, \Cb \Vu_0^{(2)})_{[L^2]^2} &= \lim_{n \to \infty} (\Cb \Vu_{{\alpha_{n_k}}}^{(2)}, \Cb \Vu_0^{(2)})_{[L^2]^2}\\
            & \stackrel{\eqref{eq:u2-characterisation}}{=} \lim_{n \to \infty}  - \alpha_{n_k} (\Vu_{{\alpha_{n_k}}}^{(2)}, \Vu_0^{(2)})_{H} + \alpha_{n_k} \int_{\partial \Omega} \Vg \cdot \Vu_0^{(2)}\dr s = 0.
            \end{aligned}
    \end{equation}
    Thus $\Cb \Vu_0^{(2)} = 0 $, which means $\Vu_0^{(2)}\in V$ and hence $\Vu_0^{(2)}=0$. Since the null-sequence $(\alpha_n)$ was arbitrary and since the limit is unique we obtain that $\Vu^{(2)}_{\alpha} \rightharpoonup 0$ in $H$ as $\alpha \to 0$.

    Testing \eqref{eq:u2-characterisation} with $\Vv_\alpha^{(2)} = \Vu_\alpha^{(2)}$, we obtain (using the weak-weak continuity of the trace operator)
    \begin{equation} 
        \lim_{\alpha\to 0} \|\Cb \Vu_{\alpha}^{(2)}\|_{[L^2]^2}^2 =  - \lim_{\alpha\to 0} \alpha \| \Vu_{\alpha}^{(2)}\|_{H}^2  +  \lim_{\alpha\to 0} \alpha \int_{\partial \Omega} \Vg\cdot \Vu^{(2)}_\alpha \dr s  =0.
    \end{equation}
This shows $\Cb\Vu_{\alpha}^{(2)} \to 0$ in $[L^2(\Omega)]^2$ as $\alpha \to 0$.
To show that $\Vu_{\alpha}^{(2)} \to 0$ strongly in $H$ as $\alpha \to 0$, we show that its norm converges to $0$. 
We test \eqref{eq:u2-characterisation} again with $\Vv_\alpha^{(2)} = \Vu_\alpha^{(2)}$ and estimate the left-hand side
\begin{equation}
    \label{eq:estimate_u_alpha}
    \begin{split}
        \alpha \|\Cb \Vu_{\alpha}^{(2)}\|_{[L^2]^2}^2 + \alpha \| \Vu_{\alpha}^{(2)}\|_{H}^2 & \le  \|\Cb \Vu_{\alpha}^{(2)}\|_{[L^2]^2}^2 + \alpha \| \Vu_{\alpha}^{(2)}\|_{H}^2 \\
                                                                                                                    & = \alpha \int_{\partial \Omega} \Vg\cdot \Vu_{\alpha}^{(2)} \dr s
    \end{split}
\end{equation}
  and thus it follows 
  \begin{align}
      \lim_{\alpha \to 0}  \|\Vu_{\alpha}^{(2)}\|_{H}^2 \le - \lim_{\alpha \to 0} \|\Cb \Vu_{\alpha}^{(2)}\|_{[L^2]^2}^2 + \lim_{\alpha \to 0} \int_{\partial \Omega} \Vg\cdot \Vu_{\alpha}^{(2)} \dr s = 0.
  \end{align}
  Using this strong convergence we obtain $\Cb(\frac{\Vu_{\alpha}^{(2)}}{\sqrt \alpha}) \to 0$ in $[L^2(\Omega)]^2$ as $\alpha \to 0$ from  \eqref{eq:estimate_u_alpha}. 

(v) Recall that $\Vu_\alpha^{(2)}$ is bounded in $V^\bot$ uniformly in $\alpha$ and hence for every sequence $(\alpha_n)$ with $\alpha_n\to \infty$,  we find $\overbar{\Vu}^{(2)}\in V^\bot$ and a subsequence $(\alpha_{n_k})$ of $(\alpha_n)$ such that $\Vu_{\alpha_{n_k}}^{(2)}\to \overbar{\Vu}$. 
Hence choosing $\alpha=\alpha_{n_k}$ in \eqref{eq:u2-characterisation} and then passing to the limit $k\to \infty$ we obtain
\begin{equation}
    \label{eq:1alpha_B2}
    ( \overbar{\Vu}, \Vv^{(2)})_{H} =  \int_{\partial \Omega} \Vg\cdot \Vv^{(2)} \dr s
\end{equation}
for all $\Vv^{(2)}\in V^\bot$. Since \eqref{eq:1alpha_B2} admits a unique solution it follows that $\overbar{\Vu} = \varphibf_{\Vg}^{(2)}$. Finally the strong convergence follows again by testing \eqref{eq:u2-characterisation} with $\Vv_\alpha^{(2)} = \Vu_\alpha^{(2)}$ and passing to the limit $\alpha\to \infty$. 
\end{proof}
\subsubsection*{Symmetrized shape gradient in $H^1$}
For any $\Vu \in [H^1(\Omega)]^2$ it holds that $\|\partial \Vu\|_{[L^2]^{2\times2}}^2 = \|\sym(\partial\Vu)\|_{[L^2]^{2\times2}}^2 + \|\asym(\partial \Vu)\|_{[L^2]^{2\times2}}^2$.
Now observe that $\|\asym(\partial \Vu)\|^2_{[L^2]^{2\times2}} = \|\partial_x u_2 - \partial_y u_1\|^2_{L^2}$ and hence minimising this term works against the satisfaction of the Cauchy-Riemann equations, as they require $ \partial_x u_2 + \partial_y u_1=0$.

In order for the symmetric part of the gradient to be a norm on $[H^1(\Omega)]^2$, we have to remove the null-space of the asymmetric part.
We define 
\begin{equation}
    H(\sym,\Omega) \defeq \{ \Vu \in [L^2(\Omega)]^2 : \sym(\partial\Vu) \in [L^2(\Omega)]^{2\times2}\}.
\end{equation}
By Korn's inequality in its various forms \cite[Thm.~2 and Thm.~4]{KonOle88} we know that $H(\sym,\Omega) = [H^1(\Omega)]^2$, and that 
\begin{equation}
    \mathring H(\sym,\Omega) \defeq \{\Vu \in H(\sym,\Omega) : \int_\Omega \Vu\dr x =0 \text{ and }  \int_{\Omega} \partial_y u_1 + \partial_x u_2 \dr x = 0\}
\end{equation}
is a Hilbert space when equipped with the inner-product given by 
\begin{equation}
    (\Vu, \Vv)_{\mathring{H}(\sym)} \defeq (\sym(\partial \Vu), \sym(\partial \Vv))_{[L^2]^{2\times2}}.
\end{equation}
We then consider the following minimisation problem: given $\Vg\in [L^2(\partial\Omega)]^2$,
we study the following least square relaxation of \eqref{eq:grad-optim-problem-with-strict-cr}: find a minimiser $\Vu\in \mathring{H}(\sym,\Omega)$ of 
\begin{equation}
    \label{eq:min_L2_least_square_sym}
    \min_{\Vu\in \mathring{H}(\sym,\Omega)} \frac12 \left(\|\Cb\Vu\|_{[L^2]^2}^2   + \alpha \|\sym(\partial\Vu)\|_{[L^2]^{2\times2}}^2\right)  - \alpha \int_{\partial \Omega} \Vg\cdot \Vu\; \dr s, \quad \alpha >0.
\end{equation}
The existence and uniqueness of a minimiser follows as before, and the decomposition statement as stated in Proposition~\ref{prop:solution-decomposition} holds.
%
The good properties of mesh deformations obtained by using the symmetric part of the gradient have been observed before e.g. by \cite{schulz2016computational}, where an inner product based on the linear elasticity equations has been used.

\subsection{Nearly holomorphic shape gradients with mixed boundary conditions}
Often a part of the boundary $\partial \Omega$ is clamped somewhere, meaning that only a part of the shape can freely move. The next lemma shows that the space of clamped holomorphic mappings is essentially zero. 

%
Let $\Gamma_D\subset \partial \Omega$ be a relatively closed, measurable part of the boundary with positive surface measure $|\Gamma_D|>0$ that consists of finitely many connected components and has at least one interior point. We define $\Gamma := \Omega \setminus \Gamma_D$ and let $[H^1_{\Gamma}(\Omega)]^2 \defeq \{\Vu\in [H^1(\Omega)]^2 : \Vu\vert_{\Gamma_D}=0\}$. 
Notice that in case $\Omega=\partial \Omega$ this definition reduces to the usual one if $0$ is identified with with the empty set. 

\begin{lemma}\label{lem:non-existence-clamped-cr}
    Let $\Omega\subset\VR^2$ be an open set with a boundary $\partial \Omega$ that can be locally represented by the graph of a Lipschitz function.
    Let $\Gamma_D$ be as above, then  $\Vu\in [H^1_{\Gamma}(\Omega)]^2$ satisfies $\Cb \Vu=0$ a.e. on $\Omega$ if and only if $\Vu=0$ a.e. on $\Omega$.
\end{lemma}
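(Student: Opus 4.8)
The plan is to encode the equation $\mathcal{B}\mathbf{u}=0$ as the statement that the complex function $\hat{\mathbf{u}}:=u_1+\iu u_2$ is holomorphic on $\Omega$ (cf.\ the remark following \eqref{eq:cauchy_riemann}), to extend $\hat{\mathbf{u}}$ by zero across the portion of $\partial\Omega$ on which it is clamped, and then to conclude with interior elliptic regularity together with the identity theorem for holomorphic functions. The reverse implication is trivial, so only ``$\Rightarrow$'' needs an argument, and since everything below is local we may assume $\Omega$ is connected (otherwise we argue on each connected component separately). I would first note that $\mathbf{u}\in[H^1(\Omega)]^2$ with $\mathcal{B}\mathbf{u}=0$ weakly makes each $u_i$ weakly harmonic, because $\mathcal{B}^2=\mathbf{I}_2\Delta$ (Remark~\ref{rem:holo_harmonic}); Weyl's lemma then yields $u_1,u_2\in C^\infty(\Omega)$, so $\hat{\mathbf{u}}$ is a genuine holomorphic function on $\Omega$.

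Next I would localise at a point $z_0$ that is an interior point of $\Gamma_D$ relative to $\partial\Omega$. Choosing $r>0$ small enough, $B_r(z_0)\cap\partial\Omega$ is the graph of a Lipschitz function, so $B_r(z_0)$ splits into the nonempty open sets $D:=\Omega\cap B_r(z_0)$ and $E:=B_r(z_0)\setminus\overline\Omega$ with common interface $\gamma:=\partial\Omega\cap B_r(z_0)$; shrinking $r$ further, and using that $z_0$ is a relative interior point of $\Gamma_D$, we may also ensure $\gamma\subset\Gamma_D$. Since $\hat{\mathbf{u}}\in H^1(D;\comp)$ and its trace on $\gamma$ vanishes, the extension $v$ of $\hat{\mathbf{u}}|_D$ by zero to $B_r(z_0)$ lies in $H^1(B_r(z_0);\comp)$, with weak gradient agreeing with $\nabla\hat{\mathbf{u}}$ on $D$ and vanishing on $E$. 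Consequently $\mathcal{B}v=0$ a.e.\ on $B_r(z_0)$: it vanishes on $D$ by hypothesis, on $E$ because $v\equiv 0$ there, and $\gamma$ is a null set.

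To finish, I would apply the regularity argument a second time: $v$ is weakly holomorphic on the disc $B_r(z_0)$, hence (again via $\mathcal{B}^2=\mathbf{I}_2\Delta$ and Weyl's lemma) holomorphic there in the classical sense, and it vanishes identically on the nonempty open set $E$, so the identity theorem forces $v\equiv 0$ on $B_r(z_0)$. In particular $\hat{\mathbf{u}}$ vanishes on the nonempty open set $D\subset\Omega$, and a final application of the identity theorem to the holomorphic function $\hat{\mathbf{u}}$ on the connected domain $\Omega$ gives $\hat{\mathbf{u}}\equiv 0$, i.e.\ $\mathbf{u}=0$ a.e.\ on $\Omega$.

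The one genuinely non-formal ingredient, and the step I expect to be the main obstacle to make fully rigorous, is the gluing assertion: that the zero extension of an $H^1(D)$ function whose trace vanishes on the interface $\gamma=\partial D\cap B_r(z_0)$ is again $H^1$ on $B_r(z_0)$ with the expected weak derivative. This is a standard Sobolev gluing lemma, and it is precisely here (together with the splitting of $B_r(z_0)$ into $D$, $E$ and $\gamma$) that the local Lipschitz regularity of $\partial\Omega$ near $z_0$ enters. It is worth observing that this argument uses only the existence of a single relative interior point of $\Gamma_D$; the remaining hypotheses on $\Gamma_D$ (positive measure, finitely many components) play no role in this proof.
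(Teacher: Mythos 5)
Your proof is correct and follows essentially the same route as the paper's: extend $\Vu$ by zero across a ball centred at an interior point of $\Gamma_D$, verify that the extension is $H^1$ and satisfies the Cauchy--Riemann equations a.e.\ (using the vanishing trace and the Lipschitz boundary for the gluing), upgrade to genuine holomorphy, and conclude with the identity theorem. The only cosmetic differences are that the paper cites a theorem of Gray and Morris for the step ``weak CR in $H^1$ implies holomorphic'' where you use Weyl's lemma via $\Cb^2=\VI_2\Delta$, and that it extends to $\Omega\cup B_r(\Vz)$ and applies the uniqueness principle once rather than working on $B_r(z_0)$ and then applying the identity theorem a second time on $\Omega$.
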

\begin{proof}
    Since $\Gamma_D$ contains an interior point, there exists $\Vz\in \Gamma_D$ and a ball $B_r(\Vz)$ centered at $\Vz$, of radius $r>0$ such that $B_r(\Vz)\cap \partial\Omega = B_r(\Vz)\cap\Gamma_D$.
    Now let $U\defeq \Omega\cup B_r(\Vz)$ and denote the extension of $\Vu$ by $0$ to $U$ by $\tilde\Vu$.

    \textit{Claim: $\tilde\Vu$ is holomorphic on $U$}. Thanks to \cite[Thm.~1]{GraMor78} the function $\tilde \Vu$ is holomorphic on $U$ if $\tilde \Vu$ belongs to $[H^1(U)]^2$ and the weak derivatives satisfy the Cauchy-Riemann equations almost everywhere in $U$.  To see this, we let $\psi \in C^\infty_0(U)$ and use Green's theorem in $H^1(\Omega)$ to get 
    \begin{equation}
        \int_U \partial_{x_i}\psi \,\tilde u_j\,\dr x = \int_\Omega \partial_{x_i}\psi \, u_j\,\dr x = \int_{\partial \Omega} \psi u_j \, \nu_i\, \dr s - \int_{\Omega} \psi \, \partial_{x_i} u_j \, \dr x,
    \end{equation}
   where $\nu_i$ denotes the $i$th component of the outward pointing unit normal field along $\partial \Omega$.  Notice that first term of the right hand side vanishes since the trace of $\Vu$ is $0$ on $\Gamma_D \supseteq U \cap \partial \Omega$ and $\psi = 0$ in $\Gamma = \partial \Omega \setminus \Gamma_D$. This shows that $\Cb\tilde \Vu =0$ a.e. on $U$ and hence $\tilde \Vu$ coincides almost everywhere with a 
   holomorphic function.  

    The claim now follows from the uniqueness principle (also called principle of permanence) \cite[p.~156]{gamelin2003complex}, as $\tilde\Vu$ is zero on $U\cap\Gamma_D$ which contains the accumulation point $\Vz$. 
\end{proof}

This means that a decomposition into a holomorphic and a non-holomorphic part as in Proposition~\ref{prop:solution-decomposition} is not possible for this case and that the solution will always be non-holomorphic.
\begin{problem}
Given $\Vg\in [L^2(\partial\Omega)]^2$, we study the following minimisation problem
\begin{equation} \label{eq:min_L2_least_square_dirichlet}
    \min_{\Vu\in [H^1_{\Gamma}(\Omega)]^2} \frac{1}{2} \bigg(\|\Cb \Vu\|_{[L^2]^2}^2 + \alpha \|\partial \Vu\|_{[L^2]^{2\times 2}}^2\bigg) - \alpha \int_{\Gamma} \Vg \cdot \Vu \dr s, \quad \alpha > 0. 
\end{equation}
\end{problem}

\begin{lemma}
Problem \eqref{eq:min_L2_least_square_dirichlet} admits a unique solution $\Vu$ that satisfies
\begin{equation}
    (\Cb \Vu, \Cb \Vv)_{[L^2]^2} + \alpha (\partial \Vu,\partial\Vv)_{[L^2]^{2\times2}} = \int_{\Gamma} \Vg \cdot \Vv\dr s
\end{equation}
for all $\Vv\in [H^1_{\Gamma}(\Omega)]^2$.
\end{lemma}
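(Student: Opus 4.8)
The plan is to argue exactly as in the proof of Lemma~\ref{lem:existence_min_neumann}, the only structural change being that the Poincar\'e inequality used there on $[\mathring H^1(\Omega)]^2$ is replaced by a Friedrichs-type inequality on $[H^1_{\Gamma}(\Omega)]^2$. First I would record that, because $\Gamma_D$ is relatively closed with $|\Gamma_D|>0$ (and under the stated Lipschitz regularity of $\partial\Omega$), there is a constant $c>0$ with $\|\Vu\|_{[L^2(\Omega)]^2}\le c\,\|\partial\Vu\|_{[L^2(\Omega)]^{2\times2}}$ for every $\Vu\in[H^1_{\Gamma}(\Omega)]^2$. Consequently $\Vu\mapsto\|\partial\Vu\|_{[L^2]^{2\times2}}$ is a norm on $[H^1_{\Gamma}(\Omega)]^2$ equivalent to the full $H^1$-norm, so $[H^1_{\Gamma}(\Omega)]^2$ is a closed subspace of $[H^1(\Omega)]^2$ and the functional in \eqref{eq:min_L2_least_square_dirichlet} is well defined on it.

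Next I would verify the hypotheses of the direct method for $F(\Vu):=\tfrac12\big(\|\Cb\Vu\|_{[L^2]^2}^2+\alpha\|\partial\Vu\|_{[L^2]^{2\times2}}^2\big)-\alpha\int_{\Gamma}\Vg\cdot\Vu\,\dr s$. Convexity is immediate since $\Vu\mapsto\|\Cb\Vu\|_{[L^2]^2}^2$ and $\Vu\mapsto\|\partial\Vu\|_{[L^2]^{2\times2}}^2$ are quadratic forms of bounded linear operators and the remaining term is linear; strict convexity follows because $\|\partial{\cdot}\|_{[L^2]^{2\times2}}$ is a norm on $[H^1_{\Gamma}(\Omega)]^2$. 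Continuity of $F$ with respect to the $H^1$-norm follows from boundedness of $\Cb:[H^1(\Omega)]^2\to[L^2(\Omega)]^2$ and of the trace map $[H^1(\Omega)]^2\to[L^2(\partial\Omega)]^2$, and together with convexity it gives weak lower semicontinuity. For coercivity I would bound the linear term by the trace theorem and the Friedrichs inequality, $|\alpha\int_{\Gamma}\Vg\cdot\Vu\,\dr s|\le\alpha C\|\Vg\|_{[L^2(\partial\Omega)]^2}\|\partial\Vu\|_{[L^2]^{2\times2}}$, and absorb it into $\tfrac\alpha2\|\partial\Vu\|_{[L^2]^{2\times2}}^2$ via Young's inequality, so that $F(\Vu)\to\infty$ as $\|\partial\Vu\|_{[L^2]^{2\times2}}\to\infty$. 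A minimising sequence is then bounded, hence has a weakly convergent subsequence whose limit minimises $F$ by weak lower semicontinuity, and uniqueness follows from strict convexity.

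Finally I would compute the Euler--Lagrange equation: for the minimiser $\Vu$ and any $\Vv\in[H^1_{\Gamma}(\Omega)]^2$ the function $t\mapsto F(\Vu+t\Vv)$ is differentiable at $t=0$ with derivative $(\Cb\Vu,\Cb\Vv)_{[L^2]^2}+\alpha(\partial\Vu,\partial\Vv)_{[L^2]^{2\times2}}-\alpha\int_{\Gamma}\Vg\cdot\Vv\,\dr s$, which vanishes, yielding the asserted variational identity (with the same convention on the overall factor $\alpha$ as in \eqref{eq:euler_lagrange}). I do not anticipate any real obstacle; the one point needing care is the Friedrichs inequality on $[H^1_{\Gamma}(\Omega)]^2$, which is precisely where the hypotheses on $\Gamma_D$ enter and which supplies the coercivity that replaced the Poincar\'e step in Lemma~\ref{lem:existence_min_neumann}.
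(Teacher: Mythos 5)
Your proposal is correct and follows essentially the same route as the paper, which simply states that the proof is identical to that of Lemma~\ref{lem:existence_min_neumann} with the kernel of constants now eliminated by the Dirichlet condition; your explicit substitution of a Friedrichs-type inequality on $[H^1_{\Gamma}(\Omega)]^2$ for the Poincar\'e inequality is precisely the detail the paper leaves implicit. Your remark about the factor $\alpha$ on the right-hand side correctly identifies the same notational convention (or inconsistency) already present in Lemma~\ref{lem:existence_min_neumann} and its equation \eqref{eq:euler_lagrange}.
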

\begin{proof}
The proof is identical with the one of Lemma~\ref{lem:existence_min_neumann}; note that we do not need to remove the kernel of constants due to the Dirichlet boundary condition. 
\end{proof}

\subsection{Weighted holomorphicity}
\label{sec:weighted-ls-cr-fem}
In order to control where the mapping is close to holomorphic and where it is allowed to be non-holomorphic, we introduce a weighting function $\mu\in L_\infty(\Omega)$. Then the modified version of \eqref{eq:min_L2_least_square_dirichlet}  reads:
\begin{equation} \label{eq:min_L2_least_square_dirichlet_weighted}
    \min_{\Vu\in [H^1_{\Gamma}(\Omega)]^2} \frac{1}{2} \bigg(\|\mu\Cb \Vu\|_{[L^2]^2}^2 + \alpha \|\partial \Vu\|_{[L^2]^{2\times 2}}^2\bigg) - \alpha \int_{\Gamma} \Vg \cdot \Vu \dr s, \quad \alpha > 0. 
\end{equation}
Although the weight function $\mu$ is arbitrary, a choice 
yielding good numerical results on which we report later, is given by
\begin{equation}\label{eqn:weighting-function-distance}
    \mu(\Vx) := \left(\frac{\epsilon}{d_{\partial \Omega}(\Vx)+\epsilon}\right)^{1/2},
\end{equation}
where $d_{\partial \Omega}(\Vx) := \inf_{\Vy\in \partial \Omega} |\Vx-\Vy|$ denotes
the distance function associated with $\partial \Omega$ and $\epsilon>0$ is a small 
parameter. Intuitively that choice guarantees more holomorphicity near the boundary of $\Omega$ which is justified 
by the fact that grid points near the boundary are subject to larger 
deformations.
The corresponding Euler-Lagrange equation to \eqref{eq:min_L2_least_square_dirichlet_weighted} reads: find $\Vu\in [H^1_{\Gamma}(\Omega)]^2$ such that 
\begin{equation}
    (\mu \Cb \Vu, \mu \Cb \Vv)_{[L^2]^2} + \alpha (\partial \Vu,\partial\Vv)_{[L^2]^{2\times2}} = \int_{\Gamma} \Vg \cdot \Vv\dr s
\end{equation}
for all $\Vv\in [H^1_{\Gamma}(\Omega)]^2$.
\section{Nearly holomorphic shape gradients in RKHS}\label{sec:4}
In some instances it can be advantageous to work with 
smoother deformations than provided by the Hilbert space $[H^1(\Omega)]^2$. 
However, this is difficult to realise when using finite elements to discretise the deformations, as most finite element software does not support $C^k$, $k\ge 1$, conforming finite elements.
For this purpose we propose to use
reproducing kernel Hilbert spaces (RKHS) $\Ch(\Omega)$ which by construction contain continuous functions and can be used to discretise deformations of arbitrarily high regularity.

\subsection{Reproducing kernel Hilbert spaces}
We begin with the definition of RKHS. 
\begin{definition}
We call a Hilbert space $\Ch(\Omega)$ of functions $\Omega\to \VR$ a
reproducing kernel Hilbert space if for every $\Vx\in \Omega$ the mapping
\ben
\varphibf \mapsto \varphibf(\Vx),\; \Ch(\Omega) \to \VR
\een
is a continuous functional.
\end{definition}

It can be shown (see \cite[Thm.~10.2]{Wendlandbook}) that if $\Ch(\Omega)$ is a RKHS then 
there exists a kernel function $\ksf:\Omega\times\Omega\to \VR$ (called reproducing kernel) such that  
\begin{enumerate}[$(a)$]
		\item 
		$\ksf(\Vx,\cdot)\in \Ch(\Omega)$ for every $\Vx\in \Omega$,
		\item
		$(\ksf(\Vx,\cdot), f)_{\Ch} = f(\Vx)	$ for every $\Vx\in \Omega$ and every $f\in \Ch(\Omega)$.
\end{enumerate}

The kernel $\ksf$ is always symmetric, i.e.,  $\ksf(\Vx, \Vy) = \ksf(\Vy, \Vx)$ for all $\Vx,\Vy\in \Omega$ and it is also positive semi-definite in the following sense.

\begin{definition}
	A kernel $\ksf:\Omega\times \Omega\to \VR$ is said to be positive (semi-)definite on $\Omega$ if,
	for every finite subset of pairwise distinct points $\{\Vx_i\}_{i=1}^N\subset\Omega$,
	the matrix 	$(\ksf(\Vx_i,\Vx_j))_{i,j=1}^{N}$ is positive (semi-)definite.
\end{definition}

The notion of reproducing kernels readily carries over to the vector valued case.  For instance the Cartesian product $[\Ch(\Omega)]^2\coloneqq \Ch(\Omega)\times\Ch(\Omega)$ of a RKHS
$\Ch(\Omega)$ (with reproducing kernel $\ksf$) is itself a RKHS. Its (matrix valued) reproducing kernel is $\Ksf(\cdot,\cdot)=\ksf(\cdot,\cdot) \VI_2$, where $\VI_2\in\VR^{2\times2}$ denotes the identity matrix. The reproducing kernel property (b) reads: for all $\Va\in \VR^2$, $\Vx\in \Omega$,  and all $\Vf\in [\Ch(\Omega)]^2$ we have
\ben
(\Ksf(\Vx,\cdot)\Va,\Vf)_{\Ch} = \Va\cdot \Vf(\Vx).
\een 
In view of Sobolev's embedding theorem in 2D \cite[Thm.~4.57]{DemDem12} we have $H^s(\Omega)\subset C(\overbar\Omega)$, $s>1$, and thus Sobolev spaces are important examples of a RKHS. 
The space $H^{2.5}(\VR^2)$ for instance can be obtained by the positive-definite kernels with compact support \cite[Thm.~10.35 and Table 9.1]{Wendlandbook}, $\ksf^\sigma_4(\Vx,\Vy) = \varphi(\|\Vx-\Vy\|/\sigma)$, where 
\begin{equation}\label{eqn:wendland-kernel-definition}
	\varphi(r)\coloneqq \left(1-r\right)_+^4\left(4r +1\right),
	\end{equation}
	and $(x)_+\coloneqq\max(0, x)$ denotes the positive part function. It is important to notice that the norm induced by the kernel is equivalent (but not equal) to the standard
    norm on $H^{2.5}(\Omega)$; \cite[Cor.~10.13]{Wendlandbook}. This kernel is the lowest order Wendland
	kernel with compact support that is
	positive-definite on $\VR^2$ and is of class $C^2$. Therefore RKHS are a natural extension of $[H^1(\Omega)]^2$ towards smoother deformations.

    We henceforth assume that $\ksf\in C^2(\VR^2\times \VR^2)$ is a positive definite symmetric kernel. The restriction of the kernel $\ksf$ to $\Omega\subset \VR^2$ generates a RKHS which we will denote by $\Ch(\Omega)$. Similarly we denote by $[\Ch(\Omega)]^2$ the RKHS generated by 
    $\Ksf(\Vx,\Vy) := \ksf(\Vx,\Vy) \VI_2$. By \cite[Thm.~10.44, p.~167]{Wendlandbook} it follows that $\partial_{x_i}\ksf(\cdot, \Vx)\in \Ch(\Omega)$ for all $\Vx\in \Omega$.   Notice that there is a continuous trace operator $T:\Ch(\Omega) \to \Ch(\partial\Omega)$; we refer to the appendix for details. 

Finally we will need the following result:
\begin{lemma}\label{lem:weak_point_wise}
Let $(\Vv_n)$ be a sequence in $[\Ch(\Omega)]^2$ that converges weakly to $\Vv\in [\Ch(\Omega)]^2$. Then for all $\beta = (\beta_1,\beta_2)$, $\beta_1+\beta_2\le 1$, $\beta_1,\beta_2\ge 0$,  we have  $\partial^\beta \Vv_n(\Vx) \to \partial^\beta \Vv(\Vx)$ for all $\Vx\in \Omega$ as $n\to \infty$. 
\end{lemma}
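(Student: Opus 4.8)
The plan is to use the reproducing kernel property to turn weak convergence in $[\Ch(\Omega)]^2$ into pointwise convergence of the functions and their first-order partial derivatives. The crucial observation is that for each fixed $\Vx\in\Omega$ and each multi-index $\beta$ with $|\beta|\le 1$, evaluation of $\partial^\beta$ at $\Vx$ is a \emph{bounded} linear functional on $\Ch(\Omega)$. Indeed, for $\beta=0$ this is just the defining property of a RKHS, and for $|\beta|=1$ it is the statement (cited from \cite[Thm.~10.44]{Wendlandbook}) that $\partial_{x_i}\ksf(\cdot,\Vx)\in\Ch(\Omega)$, so that the generalized reproducing identity
\begin{equation}
    (\partial_{x_i}\ksf(\cdot,\Vx), f)_{\Ch} = \partial^{\beta} f(\Vx)
\end{equation}
holds for every $f\in\Ch(\Omega)$ (with $\beta$ the multi-index corresponding to $x_i$). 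In either case there is a fixed element $g_{\Vx,\beta}\in\Ch(\Omega)$ — namely $\ksf(\cdot,\Vx)$ or $\partial_{x_i}\ksf(\cdot,\Vx)$ — which represents the functional $f\mapsto\partial^\beta f(\Vx)$.

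Then I would argue componentwise. Fix $\Vx\in\Omega$ and $\beta$ as in the statement, and let $v_{n,j}$, $v_j$ denote the $j$th scalar components of $\Vv_n$, $\Vv$, $j\in\{1,2\}$. Weak convergence $\Vv_n\rightharpoonup\Vv$ in $[\Ch(\Omega)]^2$ implies weak convergence $v_{n,j}\rightharpoonup v_j$ in $\Ch(\Omega)$ for each $j$ (since the coordinate projections are bounded linear maps, or equivalently by testing against $\Ksf(\Vx,\cdot)\Ve_j$). Applying the bounded functional above to $v_{n,j}$ and passing to the weak limit gives
\begin{equation}
    \partial^\beta v_{n,j}(\Vx) = (g_{\Vx,\beta}, v_{n,j})_{\Ch} \;\longrightarrow\; (g_{\Vx,\beta}, v_j)_{\Ch} = \partial^\beta v_j(\Vx)
\end{equation}
as $n\to\infty$. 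Since this holds for both components, $\partial^\beta\Vv_n(\Vx)\to\partial^\beta\Vv(\Vx)$, and since $\Vx\in\Omega$ was arbitrary the claim follows. (One may equivalently package the two components together by testing the weak convergence in $[\Ch(\Omega)]^2$ against $\partial^\beta_{\Vx}\Ksf(\cdot,\Vx)\Va$ for arbitrary $\Va\in\VR^2$.)

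There is no serious obstacle here; the only point requiring care is the justification that first-order derivative point evaluations are continuous on $\Ch(\Omega)$, which is exactly what the smoothness assumption $\ksf\in C^2(\VR^2\times\VR^2)$ buys us via \cite[Thm.~10.44]{Wendlandbook}, together with the differentiated reproducing formula. The restriction $\beta_1+\beta_2\le 1$ in the statement is precisely the order of derivatives for which this continuity is available from a $C^2$ kernel. Everything else is a routine application of the definition of weak convergence against the (fixed) representing elements.
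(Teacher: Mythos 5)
Your proof is correct and follows essentially the same route as the paper: both arguments rest on the fact (from Wendland, Thm.~10.44) that $\partial^\beta f(\Vx) = (\partial^\beta\Ksf(\Vx,\cdot)\Va, f)_{[\Ch]^2}$ for $|\beta|\le 1$, so that derivative point evaluation is a bounded functional represented by a fixed element against which weak convergence can be tested. If anything, your write-up is cleaner than the paper's, whose displayed chain of equalities begins with a spurious ``$0=\lim_{n\to\infty}\dots$'' and ends with a non sequitur apparently carried over from the closedness argument for the subspace $V$.
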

\begin{proof}
	Let $(\Vv_n)$ be a sequence in $[\Ch(\Omega)]^2$ that converges weakly in $[\Ch(\Omega)]^2$ to $\Vv\in [\Ch(\Omega)]^2$. By \cite[Thm.~10.44, p.~167]{Wendlandbook} we have for all $\Vv\in [\Ch(\Omega)]^2$ and all $\Vx\in \Omega$ and $\Va\in \VR^2$, 
	\ben
	\partial^\beta \Vv(\Vx)\cdot \Va = (\partial^\beta \Ksf(\Vx,\cdot)\Va,\Vv)_{[\Ch]^2} 
	\een
	for all multi-indices $\beta =(\beta_1,\beta_2)$ with $|\beta_1|+|\beta_2| \le 1$. As a result for all $\Va\in \VR^2$ and all multi-indices $\beta =(\beta_1,\beta_2)$ with $|\beta_1|+|\beta_2| \le 1$, we have
	\ben
	\begin{split}
		0=\lim_{n\to \infty}\partial^\beta \Vv_n(\Vp_\ell)\cdot \Va &  = \lim_{n\to \infty} (\partial^\beta \Ksf(\Vp_\ell,\cdot)\Va,\Vv_n)_{[\Ch]^2}\\
		& = (\partial^\beta \Ksf(\Vp_\ell,\cdot)\Va,\Vv)_{[\Ch]^2} \\
		& = \partial^\beta \Vv(\Vp_\ell)\cdot \Va
	\end{split}
	\een
	for $\ell = 1,\ldots, n$, which shows $\Vv\in [\Ch(\Omega)]^2$ and thus finishes the proof. 
\end{proof}

\subsection{Nearly holomorphic shape gradients in $\Ch$}
We now consider the minimisation problem \eqref{eq:min_abstract} in the 
Hilbert space $H=[\Ch(\Omega)]^2$.  We require $\Vg\in [\Ch(\partial\Omega)]^2$ and 
for a finite set of points
$P_n := \{\Vp_1,\ldots, \Vp_n\}\subset \Omega$, 
define the space 
\begin{equation}
    \Cp \defeq \VR^{n \times 2}, \quad \|\Vv\|_{\Cp} \defeq \frac{1}{n} \sum_{\ell=1}^n (v_{i,1}^2+v_{i,2}^2)
\end{equation}
and let $\Cb:H\to\Cp$ be the point evaluation of the Cauchy-Riemann operator in the points $\Vp_1,\ldots, \Vp_n$.
With this choice \eqref{eq:min_abstract} reads:
\begin{equation}
    \label{eq:min_CR_relaxed_pw}
    \min_{\Vu\in [\Ch(\Omega)]^2}  \frac12\left(\frac1n \sum_{\ell=1}^n|\Cb\Vu(\Vp_\ell)|^2 + \alpha \|\Vu\|_{[\Ch]^2}^2\right) - \int_{\partial \Omega}\alpha \Vg\cdot \Vu\dr s, \quad \alpha >0.
\end{equation}
The canonical choice for the points $\Vp_\ell$ are the nodes of the mesh that is being deformed, as this is where conformality has to be enforced.

By Lemma~\ref{lem:weak_point_wise} weak convergence in $[\Ch(\Omega)]^2$ implies pointwise convergence of the partial derivatives. Therefore it can be readily checked by the direct method of calculus of variations that the minimisation problem \eqref{eq:min_CR_relaxed_pw} admits a unique solution; see \cite{Wendlandbook}.

We can now prove an analogous result to Proposition~\ref{prop:solution-decomposition}. The main difference to the $L^2(\Omega)$-case is that now we are able to show uniform convergence  of the Cauchy-Riemann operator at the points $\Vp_\ell$ instead of $L^2(\Omega)$-convergence. In this way  more localised holomorphicity can be achieved.

\begin{lemma}
    The linear space $V=\{\Vu : \Cb \Vu(\Vp_\ell)=0, \; \ell=1,\ldots, n\}$ is closed in $[\Ch(\Omega)]^2$. 
\end{lemma}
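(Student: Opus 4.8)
The plan is to show that $V=\{\Vu\in[\Ch(\Omega)]^2 : \Cb\Vu(\Vp_\ell)=0,\ \ell=1,\dots,n\}$ is the kernel of a continuous linear map into a finite-dimensional space, hence closed. First I would define, for each $\ell$, the evaluation functional $E_\ell:[\Ch(\Omega)]^2\to\VR^2$ given by $E_\ell(\Vu):=\Cb\Vu(\Vp_\ell)$, which makes sense because $\Cb\Vu$ involves only first-order partial derivatives of the components of $\Vu$, and by \cite[Thm.~10.44]{Wendlandbook} (as recalled in the proof of Lemma~\ref{lem:weak_point_wise}) point evaluations of those first derivatives are continuous linear functionals on $[\Ch(\Omega)]^2$. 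Concretely, each component of $E_\ell(\Vu)$ is a fixed linear combination of terms of the form $\partial^\beta u_j(\Vp_\ell)$ with $|\beta|\le 1$, and each such term equals $(\partial^\beta\Ksf(\Vp_\ell,\cdot)\Va,\Vu)_{[\Ch]^2}$ for a suitable $\Va$, so $E_\ell$ is bounded.

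Next I would assemble these into a single continuous linear map $E:=(E_1,\dots,E_n):[\Ch(\Omega)]^2\to\VR^{n\times 2}=\Cp$, which is exactly the operator $\Cb:H\to\Cp$ in the notation of \eqref{eq:min_CR_relaxed_pw}. Then $V=\Kern{E}$, and since $E$ is continuous and $\{0\}$ is closed in the finite-dimensional space $\Cp$, the preimage $V=E^{-1}(\{0\})$ is closed in $[\Ch(\Omega)]^2$. Alternatively, one can argue sequentially: if $\Vu_k\to\Vu$ in $[\Ch(\Omega)]^2$, then by the reproducing property of the derivative kernels $\Cb\Vu_k(\Vp_\ell)\to\Cb\Vu(\Vp_\ell)$ for each $\ell$, so $\Cb\Vu_k(\Vp_\ell)=0$ for all $k$ forces $\Cb\Vu(\Vp_\ell)=0$; this is just Lemma~\ref{lem:weak_point_wise} specialised to strong convergence and $|\beta|=1$.

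I do not expect any real obstacle here: the only point requiring care is justifying that $\Vu\mapsto\partial^\beta u_j(\Vp_\ell)$ is continuous on $[\Ch(\Omega)]^2$ for $|\beta|\le 1$, and this is precisely the content of \cite[Thm.~10.44]{Wendlandbook} under the standing assumption $\ksf\in C^2(\VR^2\times\VR^2)$, already invoked earlier in the excerpt. Everything else is the elementary fact that a finite intersection of kernels of continuous functionals is a closed subspace. I would therefore keep the proof to one or two sentences, citing Lemma~\ref{lem:weak_point_wise} (or \cite[Thm.~10.44]{Wendlandbook}) for the continuity of the derivative evaluations and concluding that $V$, being the kernel of the continuous map $\Cb:[\Ch(\Omega)]^2\to\Cp$, is closed.
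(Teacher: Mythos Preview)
Your proposal is correct and matches the paper's approach: the paper's proof is the single sentence ``This is a direct consequence of Lemma~\ref{lem:weak_point_wise},'' which is exactly your sequential argument that (weak, hence strong) convergence in $[\Ch(\Omega)]^2$ implies pointwise convergence of $\Cb\Vu$ at each $\Vp_\ell$. Your framing of $V$ as the kernel of the continuous map $\Cb:[\Ch(\Omega)]^2\to\Cp$ is just a repackaging of the same fact.
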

\begin{proof}
This is a direct consequence of Lemma~\ref{lem:weak_point_wise}. 
\end{proof}
As a result of the previous lemma we may consider the 
orthogonal decomposition $[\Ch(\Omega)]^2 = V \oplus V^\perp $ for $V=\{\Vu : \Cb \Vu(\Vp_\ell)=0, \; \ell=1,\ldots, n\}.$
\begin{proposition}\label{prop:solution-decomposition-RKHS}
	Let $\varphibf_{\Vg} \in [\Ch(\partial\Omega)]^2 $ be the solution to 
	\begin{equation}\label{eq:grad_RKHS}
        (\varphibf_{\Vg},\varphibf)_{[\Ch]^2}  =  \int_{\partial\Omega} \Vg\cdot\varphibf\; \dr s \quad \text{ for all } \varphibf\in [\Ch(\Omega)]^2.
	\end{equation}
	Denote $\varphibf_{\Vg}=   \varphibf_{\Vg}^{(1)}+ \varphibf_{\Vg}^{(2)}$ for $\varphibf_{\Vg}^{(1)}\in V$ and $\varphibf_{\Vg}^{(2)}\in V^\perp$ and decompose the solution  $\Vu_{\alpha} = \Vu_{\alpha}^{(1)} + \Vu_{\alpha}^{(2)}$ of \eqref{eq:min_CR_relaxed_pw} in the same way.
	
	Then 
	\begin{itemize}
		\item[(i)] $\Vu_{\alpha}^{(1)} = \varphibf_{\Vg}^{(1)}$ for all $\alpha>0$,
		\item[(ii)] there exists a constant $C>0$ independent of $\alpha$ and $n$ such that 
            \begin{equation}\label{eq:estimate_kernel}
            \| \Vu_{\alpha}^{(2)}\|_{[\Ch]^2} \le C \| \Vg\|_{[\Ch(\partial\Omega)]^2},
		\end{equation}
		\item[(iii)] $\Vu_{\alpha}^{(2)}$ satisfies
		\begin{equation} \label{eq:u2-characterisation_H_RKHS}
            \frac1n\sum_{\ell=1}^n \Cb\Vu_\alpha^{(2)}(\Vp_\ell)\cdot \Cb\Vv^{(2)}(\Vp_\ell) + \alpha ( \Vu_{\alpha}^{(2)}, \Vv^{(2)})_{[\Ch]^2} = \alpha \int_{\partial \Omega} \Vg\cdot \Vv^{(2)} \dr s
		\end{equation}
		for all  $\Vv^{(2)} \in V^\perp$, 
		\item[(iv)] We have
		\begin{align}
        \Vu_{\alpha} \to \varphibf_{\Vg}^{(1)}  &  \quad \text{ strongly in  } \quad [\Ch(\Omega)]^2 \quad \alpha \to 0, \\
             \frac{1}{\sqrt \alpha} \max_{\Vp\in \Cp_n} |\Cb(\Vu_{\alpha})(\Vp)| &\to 0 \quad  \text{ as } \alpha \to 0.
		\end{align}
		\item[(v)] Lastly, 
		\begin{equation}
            \Vu_\alpha \to \varphibf_{\Vg}    \quad \text{ strongly in  } \quad [\Ch(\Omega)]^2\quad \alpha \to \infty.
		\end{equation}
	\end{itemize}
\end{proposition}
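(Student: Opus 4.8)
The plan is to mimic the proof of Proposition~\ref{prop:solution-decomposition}, with Lemma~\ref{lem:weak_point_wise} taking over the role that weak $L^2(\Omega)$-convergence of the Cauchy--Riemann residual played there: since weak convergence in $[\Ch(\Omega)]^2$ forces pointwise convergence of the first derivatives, the maps $\Vu\mapsto\Cb\Vu(\Vp_\ell)$ are weakly continuous, and being also continuous the associated nodal residual term is weakly lower semicontinuous and bounded on bounded sets. I would start from the first-order optimality condition for \eqref{eq:min_CR_relaxed_pw},
\begin{equation*}
\frac1n\sum_{\ell=1}^n\Cb\Vu_\alpha(\Vp_\ell)\cdot\Cb\Vv(\Vp_\ell)+\alpha(\Vu_\alpha,\Vv)_{[\Ch]^2}=\alpha\int_{\partial\Omega}\Vg\cdot\Vv\,\dr s\qquad\text{for all }\Vv\in[\Ch(\Omega)]^2,
\end{equation*}
and use \eqref{eq:grad_RKHS} to rewrite the right-hand side as $\alpha(\varphibf_\Vg,\Vv)_{[\Ch]^2}$. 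Decomposing $\Vu_\alpha=\Vu_\alpha^{(1)}+\Vu_\alpha^{(2)}$ and $\Vv=\Vv^{(1)}+\Vv^{(2)}$ along $V\oplus V^\perp$ and using $\Cb\Vw(\Vp_\ell)=0$ for $\Vw\in V$, testing with $\Vv^{(2)}=0$ gives $(\Vu_\alpha^{(1)}-\varphibf_\Vg^{(1)},\Vv^{(1)})_{[\Ch]^2}=0$ for all $\Vv^{(1)}\in V$, hence (i); testing with $\Vv^{(1)}=0$ and noting $(\varphibf_\Vg^{(2)},\Vv^{(2)})_{[\Ch]^2}=(\varphibf_\Vg,\Vv^{(2)})_{[\Ch]^2}=\int_{\partial\Omega}\Vg\cdot\Vv^{(2)}\,\dr s$ yields (iii).

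For (ii) I would test \eqref{eq:u2-characterisation_H_RKHS} with $\Vv^{(2)}=\Vu_\alpha^{(2)}$, drop the nonnegative discrete term, and apply Cauchy--Schwarz to get $\|\Vu_\alpha^{(2)}\|_{[\Ch]^2}\le\|\varphibf_\Vg^{(2)}\|_{[\Ch]^2}\le\|\varphibf_\Vg\|_{[\Ch]^2}$; the bound $\|\varphibf_\Vg\|_{[\Ch]^2}\le C\|\Vg\|_{[\Ch(\partial\Omega)]^2}$ then follows from \eqref{eq:grad_RKHS} together with the continuity of the trace operator $T:[\Ch(\Omega)]^2\to[\Ch(\partial\Omega)]^2$ (appendix) and the continuous embedding $[\Ch(\partial\Omega)]^2\hookrightarrow[L^2(\partial\Omega)]^2$. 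The constant $C$ depends on neither $\alpha$ nor $n$, since the only $n$-dependent object — the nodal residual — is discarded rather than estimated.

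For (iv) and (v) I would exploit the uniform bound from (ii): $\{\Vu_\alpha^{(2)}\}$ lies in the closed, hence weakly closed, subspace $V^\perp$, so along any sequence of parameters a weak limit exists. As $\alpha\to0$, passing to the limit in \eqref{eq:u2-characterisation_H_RKHS} tested with the weak limit — using Lemma~\ref{lem:weak_point_wise} for the discrete term and boundedness of the boundary functional on $[\Ch(\Omega)]^2$ for the right-hand side — gives $\sum_\ell|\Cb\Vu_0^{(2)}(\Vp_\ell)|^2=0$, so $\Vu_0^{(2)}\in V\cap V^\perp=\{0\}$ and thus $\Vu_\alpha^{(2)}\weakto0$; testing \eqref{eq:u2-characterisation_H_RKHS} with $\Vu_\alpha^{(2)}$ and discarding nonnegative terms gives $\|\Vu_\alpha^{(2)}\|_{[\Ch]^2}^2\le\int_{\partial\Omega}\Vg\cdot\Vu_\alpha^{(2)}\,\dr s\to0$, i.e. strong convergence, while the same identity yields $\frac1\alpha\sum_\ell|\Cb\Vu_\alpha(\Vp_\ell)|^2\le n\int_{\partial\Omega}\Vg\cdot\Vu_\alpha^{(2)}\,\dr s\to0$, whence $\frac1{\sqrt\alpha}\max_\ell|\Cb\Vu_\alpha(\Vp_\ell)|\to0$ after using $\max_\ell\le\sum_\ell$. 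For $\alpha\to\infty$, dividing \eqref{eq:u2-characterisation_H_RKHS} by $\alpha$ and passing to the limit (the discrete term vanishes since the nodal residuals stay bounded by Lemma~\ref{lem:weak_point_wise}) identifies the weak limit as the unique solution of $(\bar\Vu,\Vv^{(2)})_{[\Ch]^2}=\int_{\partial\Omega}\Vg\cdot\Vv^{(2)}\,\dr s$ on $V^\perp$, namely $\varphibf_\Vg^{(2)}$; testing once more with $\Vu_\alpha^{(2)}$ gives $\limsup_{\alpha\to\infty}\|\Vu_\alpha^{(2)}\|_{[\Ch]^2}\le\|\varphibf_\Vg^{(2)}\|_{[\Ch]^2}$, and weak convergence together with weak lower semicontinuity of the norm upgrades this to strong convergence.

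The only genuinely new ingredient compared with Proposition~\ref{prop:solution-decomposition} is the systematic use of Lemma~\ref{lem:weak_point_wise} to pass to the limit in the nodal Cauchy--Riemann term in place of weak $L^2$-compactness; I expect the one point deserving care to be the $n$-independence of the constant in (ii), which is precisely what makes the bound \eqref{eq:estimate_kernel} meaningful for arbitrarily fine node sets.
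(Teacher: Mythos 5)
Your proposal is correct and follows essentially the same route as the paper's appendix proof: the same orthogonal splitting of the optimality condition, the same test functions for (i)--(iii), and the same weak-compactness-plus-energy-identity arguments for (iv) and (v), with Lemma~\ref{lem:weak_point_wise} doing the work of passing to the limit in the nodal Cauchy--Riemann term. Your treatment is in places slightly more explicit than the paper's (the $n$-independence of the constant in (ii), and the upgrade from weak to strong convergence in (v) via $\limsup\|\Vu_\alpha^{(2)}\|\le\|\varphibf_\Vg^{(2)}\|$ combined with weak lower semicontinuity of the norm), but these are refinements of the same argument rather than a different approach.
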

\begin{proof}
    The proof is very similar to the one of Proposition~\ref{prop:solution-decomposition} and deferred to the appendix. 
\end{proof}

\subsection{Approximation}
Our aim is now to introduce an approximation of \eqref{eq:min_CR_relaxed_pw}.  For this purpose let $ \Cx_h := \{\Vx_1, \ldots, \Vx_{N}\}\subset \overbar\Omega$  be a given set of points and define
\ben
V(\Cx_h) := \text{span}\{\ksf(\Vx,\cdot):\; \Vx \in \Cx_h   \},
\een
which is a finite dimensional subspace of $\Ch(\Omega)$.
  Subsequently we denote by $\underline \varphibf \in \VR^{2N}$ the vector corresponding to the expansion of the 
function $\varphibf$ in the basis $\{\ksf(\cdot,\Vx_i)\Ve_k:\; i=1,\ldots, N, \; k=1,2\}$ of $[V(\Cx_h)]^2$. 
The approximation of \eqref{eq:min_CR_relaxed_pw} on $[V(\Cx_h)]^2$ is then given by the minimiser of 
\begin{equation}
    \label{eq:min_CR_relaxed_pw_discrete}
    \min_{ \Vu\in [V(\Cx_h)]^2}   \frac12 \left( \frac1n\sum_{\ell=1}^n|\Cb\Vu(\Vp_\ell)|^2 + \alpha \|\Vu\|_{[\Ch]^2}^2\right) - \int_{\partial\Omega} \Vg \cdot \Vu\dr s.
\end{equation}

\begin{lemma}\label{lem:discrete_pw}
    There exists a unique solution $\Vu_h \in [V(\Cx_h)]^2$ of \eqref{eq:min_CR_relaxed_pw_discrete} that satisfies
	\ben\label{eq:CR_saddle_point_pw}
    \frac{1}{n} \sum_{\ell=1}^n  \Cb\Vu_h(\Vp_\ell)\cdot \Cb\varphibf(\Vp_\ell) +    \alpha ( \Vu_h,\varphibf)_{[\Ch]^2}    = \int_{\partial \Omega} \Vg\cdot \varphibf\; \dr s \quad  \text{ for all } \varphibf\in [V(\Cx_h)]^2,
     \een
    \end{lemma}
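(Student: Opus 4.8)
The plan is to recognise that \eqref{eq:min_CR_relaxed_pw_discrete} is the minimisation of a strictly convex, coercive quadratic functional over the finite-dimensional space $[V(\Cx_h)]^2$, so that existence and uniqueness of a minimiser are immediate and the identity \eqref{eq:CR_saddle_point_pw} is precisely its Euler--Lagrange equation. The argument runs exactly parallel to that of Lemma~\ref{lem:existence_min_neumann}, only simpler because everything now happens in finite dimensions.

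First I would fix the basis $\{\ksf(\cdot,\Vx_i)\Ve_k : i=1,\ldots,N,\ k=1,2\}$ of $[V(\Cx_h)]^2$ and expand $\Vu=\sum_{i,k}\underline u_{i,k}\,\ksf(\cdot,\Vx_i)\Ve_k$. Since $\ksf\in C^2(\VR^2\times\VR^2)$ is positive definite, these basis functions are linearly independent, hence their Gram matrix $\Gmat_h$ with respect to $(\cdot,\cdot)_{[\Ch]^2}$ is symmetric positive definite; in particular $\Vu\mapsto\|\Vu\|_{[\Ch]^2}$ restricts to a genuine norm on $[V(\Cx_h)]^2$. Next, because $\partial_{x_m}\ksf(\cdot,\Vx)\in\Ch(\Omega)$ for all $\Vx$ (equivalently by Lemma~\ref{lem:weak_point_wise}), each evaluation $\Vu\mapsto\Cb\Vu(\Vp_\ell)$ is a well-defined linear map into $\VR^2$; assembling these gives a matrix $\Cmat_h$ with $\tfrac1n\sum_{\ell=1}^n|\Cb\Vu(\Vp_\ell)|^2=\underline u^{\top}\big(\tfrac1n\Cmat_h^{\top}\Cmat_h\big)\underline u$. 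Finally, the boundary term is a continuous linear functional $\varphibf\mapsto\int_{\partial\Omega}\Vg\cdot\varphibf\,\dr s$ on $[\Ch(\Omega)]^2$ (it is bounded since $\Vg\in[\Ch(\partial\Omega)]^2$ and the trace operator $T:\Ch(\Omega)\to\Ch(\partial\Omega)$ is bounded, and in any case every linear functional is continuous on the finite-dimensional $[V(\Cx_h)]^2$), so it is represented by a vector $\underline b$ with $b_{i,k}=\int_{\partial\Omega}\Vg\cdot(\ksf(\cdot,\Vx_i)\Ve_k)\,\dr s$.

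With these ingredients the objective of \eqref{eq:min_CR_relaxed_pw_discrete} becomes $\underline u\mapsto\tfrac12\underline u^{\top}\Amat_h\underline u-\underline b^{\top}\underline u$ with $\Amat_h:=\tfrac1n\Cmat_h^{\top}\Cmat_h+\alpha\Gmat_h$. Since $\tfrac1n\Cmat_h^{\top}\Cmat_h$ is positive semidefinite and $\alpha\Gmat_h$ is positive definite ($\alpha>0$), the matrix $\Amat_h$ is symmetric positive definite, so the quadratic form is strictly convex and coercive and admits the unique minimiser $\underline u_h=\Amat_h^{-1}\underline b$; translated back, $\Vu_h\in[V(\Cx_h)]^2$ is the unique minimiser of \eqref{eq:min_CR_relaxed_pw_discrete}. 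The normal equations $\Amat_h\underline u_h=\underline b$ are exactly \eqref{eq:CR_saddle_point_pw} tested against the basis functions $\ksf(\cdot,\Vx_i)\Ve_k$, and by linearity this yields the stated identity for all $\varphibf\in[V(\Cx_h)]^2$; alternatively, and coordinate-free, one simply differentiates the functional in \eqref{eq:min_CR_relaxed_pw_discrete} at the minimiser in an arbitrary direction $\varphibf\in[V(\Cx_h)]^2$ and sets the derivative to zero. I do not anticipate any genuine obstacle here: the only points requiring (brief) justification are that $\|\cdot\|_{[\Ch]^2}$ restricts to a norm on $[V(\Cx_h)]^2$, which uses positive definiteness of $\ksf$, and that the boundary integral is a continuous linear functional; after that the claim follows from the direct method just as in Lemma~\ref{lem:existence_min_neumann}.
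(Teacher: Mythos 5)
Your proposal is correct and follows essentially the same route as the paper: both reduce \eqref{eq:min_CR_relaxed_pw_discrete} to the finite-dimensional linear system with matrix $\tfrac1n B^\top B+\alpha K$ (your $\tfrac1n\Cmat_h^\top\Cmat_h+\alpha\Gmat_h$) by expanding in the kernel basis, and conclude from the fact that this matrix is symmetric positive definite, the kernel Gram matrix being positive definite by positive definiteness of $\ksf$. Your write-up merely spells out the invertibility argument that the paper leaves as ``readily checked.''
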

\begin{proof}
	Testing  \eqref{eq:CR_saddle_point_pw} with  $\varphibf(\cdot) = \ksf(\Vx,\cdot)\Ve_k$, $k=1,2$  and using the reproducing kernel property of $\ksf$ we see that \eqref{eq:CR_saddle_point_pw} is equivalent to: find $\underline \Vu_h \in \VR^{2N}$ such that
	\begin{subequations}\label{eq:CR_saddle_point_matrix_pw}
		\begin{align}
            (B^\top B + \alpha K)\underline\Vu_h = F,
		\end{align}
	\end{subequations}
	where the matrices  $K$, $B$ and the vector $F$ are given by 
	\ben\label{eq:matrices_pw}
	B= \begin{pmatrix}
		-B_1 & B_2\\
		B_2 & B_1
	\end{pmatrix}, \qquad K= \begin{pmatrix}
	\tilde K & 0\\
	0 & \tilde K
\end{pmatrix}, \qquad F = \begin{pmatrix} F_1\\ F_2 \end{pmatrix}
\een
with
\begin{align}
B_1 &:= \partial_{x_1}^1\ksf(\Cx_h,\Cp_n),       &  B_2    & := \partial_{x_2}^1\ksf(\Cx_h,\Cp_n),        &  \tilde K & := \ksf(\Cx_h,\Cx_h) \\
F_1 & :=  (\Vg, \ksf(\Cx_h,\cdot)\Ve_1)_{[L^2]^2} ,  &  F_2    & := (\Vg, \ksf(\Cx_h,\cdot)\Ve_2)_{[L^2]^2}.      &           & 
\end{align}
It is readily checked that $B^\top B + \alpha K$ is invertible which finishes the proof. 
\end{proof}

\subsection{Weighted holomorphicity}
Similarly as in the previous section we introduce a weighting function, now it is a discrete function, $\mu:\Cp_n \to \VR$. Then a modified version of \eqref{eq:min_CR_relaxed_pw_discrete} reads:
\ben\label{eq:min_CR_relaxed_pw_discrete_weighted}
    \min_{ \Vu\in [\Ch(\Omega)]^2}   \frac12 \left( \frac1n\sum_{\ell=1}^n|\mu(\Vp_\ell)\Cb\Vu(\Vp_\ell)|^2 + \alpha \|\Vu\|_{[\Ch]^2}^2\right) - \int_{\partial\Omega} \Vg \cdot \Vu\dr s.
	\een 
The weight function $\mu$ determines how holomorphic the solution $\Vu$ will be at the points $\Vp_\ell$. It is readily checked that the corresponding Euler-Lagrange equations read:
\ben\label{eq:CR_saddle_point_pw_weighted}
    \frac{1}{n} \sum_{\ell=1}^n  \mu(\Vp_\ell)\Cb\Vu(\Vp_\ell)\cdot \mu(\Vp_\ell)\Cb\varphibf(\Vp_\ell) +    \alpha ( \Vu,\varphibf)_{[\Ch]^2}    = \int_{\partial \Omega} \Vg\cdot \varphibf\; \dr s
\een
for all $\varphibf\in [\Ch(\Omega)]^2$. 

\section{Application to shape optimisation}\label{sec:5}

\subsection{Optimisation procedure}
Let $\Omega_0\subset \VR^2$ be a given initial shape.
A common approach in shape optimisation is to view the set of admissible shapes as a manifold and to perform the following optimisation:
on $\Omega_0$ one calculates a descent direction $\Delta \Vf_0$ and then defines $\Omega_1 = (\id+\Delta\Vf_0)(\Omega)$.
At the next iteration, one can looks for a new descent direction $\Delta\Vf_1$, this time defined on $\Omega_1$ and sets $\Omega_2 = (\id + \Delta\Vf_1)(\Omega_1) = (\id+\Delta\Vf_1)\circ(\id+\Delta\Vf_0)(\Omega)$.
However, in order to formulate Quasi-Newton methods, gradients and step directions defined on previous domains need to be compared and hence a transport between tangent spaces has to be computed.

To avoid the mathematical and computational complexity of such a transport, one can reformulate the problem over a linear space, by defining it on the initial domain only:
let $\Omega=\Omega_0$ and let $H(\Omega)$ be a Hilbert space of vector fields defined on $\Omega$.
We consider
\begin{equation}
    \min_{\substack{\Vf\in H\\\|\Vf\|_{H}\le c}} J_\Omega(\Vf), \qquad J_\Omega(\Vf) = J((\id+\Vf)(\Omega)),
\end{equation}
where $c>0$ is some constant and  $J$ is a shape function.
This means, that the steps $\Delta \Vf_i$ taken by an optimisation algorithm are always defined on $\Omega$ and the deformed shapes are given by $\Omega_i = (\id + \Delta \Vf_0 + \ldots + \Delta\Vf_i) (\Omega)$.

We obtain descent directions by using the L-BFGS implementation of \cite{ridzal2014rapid}; this requires the calculation of gradients.
Denoting by $\partial J_\Omega(\Vf)$ the directional derivative of $J_\Omega$ at $\Vf$, we can calculate the gradient at $\Vf$ by solving
\begin{equation}
    (\nabla J_\Omega(\Vf), \Vv)_{H} = \partial J_\Omega(\Vf)(\Vv).
\end{equation}
The directional derivative of $J_\Omega$ is related to the classical shape derivative of $J$.
Let us set $\Omega_{\Vf} := (\id+\Vf)(\Omega)$. 
It follows from the definition that the directional derivative of $J_\Omega$  at $\Vf\in H$ in direction $\Vv\in H$ coincides with the shape derivative of $J$ at $\Omega_{\Vf}$ in direction $\Vv \circ (\id + \Vf)^{-1}$, that is, 
\begin{equation}
    \partial J_\Omega(\Vf)(\Vv) = \Dr J(\Omega_{\Vf})(\Vv \circ (\id + \Vf)^{-1}).
\end{equation}

\subsection{Regular mesh and finite elements space} 
We henceforth assume that $\Omega\subset \VR^2$ is a domain with a polygonal boundary $\partial \Omega$. 
Let  $\{\mathcal T_h\}_{h>0}$ denote a family of simplicial triangulations $\mathcal T_h=\{K\}$ consisting 
of triangles $K$ such that 
\ben
\overbar{\Omega} = \bigcup \limits_{K\in \mathcal T_h}  K, \quad \forall h >0.
\een
For every element $K\in \mathcal T_h$, $h(K)$ denotes the diameter of the smallest ball containing $K$ 
and $\rho(K)$ is the diameter of the largest ball contained in $K$. 
The maximal diameter of all elements is denoted by $h$, i.e., 
$h:=\textnormal{max} \{h(K) \  | \ K\in \mathcal T_h\}.$ 
In many finite element calculations a mesh consisting of isotropic elements is desirable.
In that case the ratio
\begin{equation}\label{eqn:mesh-quality}
    \eta(K) \defeq \frac{h(k)}{2 \rho(K)} \in [1, \infty)
\end{equation}
is a measure for the quality of the element $K$: $\eta(K)$ is $1$ for equilateral triangles and becomes large for long-and-thin triangles.

Lastly, we define Lagrange finite element functions of order $k\ge 1$ by 
\ben\label{eq:V_h_k}
V_h^k(\Omega) := \{v\in C(\overbar{\Omega}):\; v_{|K}\in \mathcal P^k(K), \;\forall  K\in \mathcal T_h \}.	
\een
For all our examples we use the \texttt{Firedrake} finite element library \cite{Rathgeber2016}.

\subsection{Efficient calculation of shape derivatives}

Consider now a space of finite element vector fields given by the basis $\{\phibf_{i}\}_i$ defined on $\Omega$.
Denoting the basis functions on the deformed domain $\Omega_{\Vf}$ by $\{\phibf_{\Omega_\Vf, i}\}_i$ we observe that $\phibf_{\Omega_\Vf, i} = \phibf_{i} \circ (\id+\Vf)^{-1}$ and hence
\begin{equation}
    \partial J_\Omega(\Vf)(\phibf_i) = \Dr J(\Omega_\Vf)(\phibf_{\Omega_\Vf, i}) \quad \text{ for all } i.
\end{equation}
This means that the derivative can be assembled efficiently on the deformed domain $\Omega_\Vf$ and then be used to calculate the gradient which lives on $\Omega$.

In order to calculate the shape derivative in the direction of kernels we follow the same procedure as in \cite{paganini2017higher}:
denote the basis vector fields of the RKHS by $\Vk_j$ and let 
$\Ci$ be the interpolation matrix between kernels and finite elements, i.e. 
$\sum_i I_{i,j} \phibf_i \approx \Vk_j$ in some sense and as a result
\begin{equation}
    [\partial J_\Omega(\Vf)(\Vk_j)]_j \approx \Ci^\top [\partial J_\Omega(\Vf)(\phibf_i)]_i = \Ci^\top [\Dr J(\Omega_{\Vf})(\phibf_{\Omega_\Vf, i})]_i.
\end{equation}

\section{Numerical experiments}\label{sec:6}

\subsection{Levelset example in finite elements}
We begin by considering a simple levelset example: the goal is to shrink a circle to a clover-like shape, as seen in Figure \ref{fig:circle-clover-domain}. Mathematically this is achieved by minimising the shape function $J(\Omega) = \int_{\Omega} f\dr\Vx$ where $f$ is given by 
\begin{equation}
    \begin{aligned}
        f(x,y) =& (\sqrt{(x - a)^2 + b y^2} - 1) (\sqrt{(x + a)^2 + b y^2} - 1) \\
               &(\sqrt{b  x^2 + (y - a)^2} - 1)  (\sqrt{b  x^2 + (y + a)^2} - 1) - \varepsilon,
    \end{aligned}
\end{equation}
where $a=4/5$, $b=2$ and $\varepsilon=0.001$ 

This example illustrates two challenges that mesh deformation methods often face.
Firstly, the initial mesh has to be significantly compressed; if this compression is not performed evenly one can quickly obtain overlapping mesh elements.
Secondly, the final mesh contains areas of high curvature; this often leads  to highly stretched finite elements, which can be disadvantageous when a PDE is solved on the mesh.
\begin{figure}[H]
	\centering
	\includegraphics[width=0.8\textwidth]{./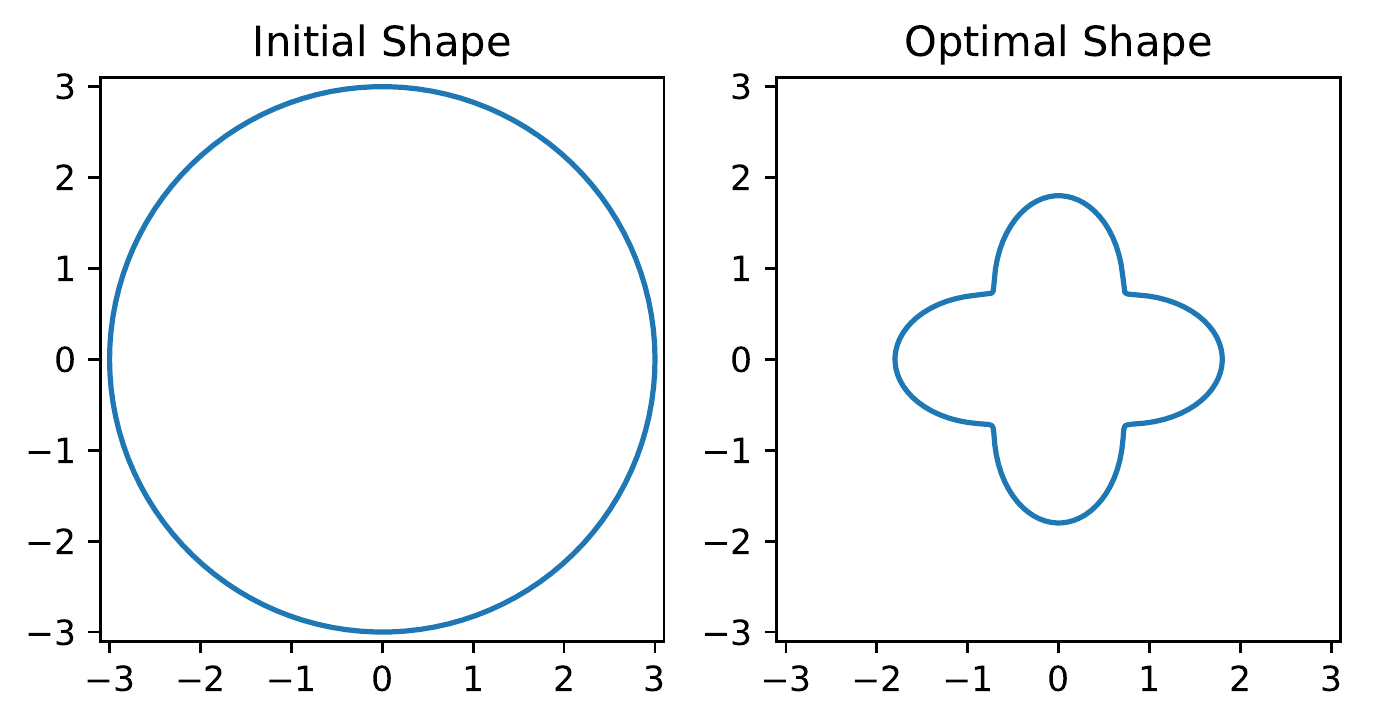}
	\caption{The initial shape (left) is a circle of radius 3; the optimal shape (right) is given by a clover-like shape.}
    \label{fig:circle-clover-domain}
\end{figure}

We compare four different inner products to calculate the shape gradients:
\begin{equation} \label{eq:four-inner-products-fem}
    \begin{aligned}
        (\Vu, \Vv)_{\mathring{H}^1} &= (\partial \Vu,\partial\Vv)_{[L^2]^{2\times2}},\\
        (\Vu, \Vv)_{\mathring{H}(\sym)} &= (\sym(\partial \Vu),\sym(\partial\Vv))_{[L^2]^{2\times2}},\\
        (\Vu, \Vv)_{\CR(\alpha)+\mathring{H}^1} &=\frac{1}{\alpha} (\Cb\Vu,\Cb\Vv)_{[L^2]^2} +(\partial \Vu,\partial\Vv)_{[L^2]^{2\times2}}, \\
        (\Vu, \Vv)_{\CR(\alpha)+\mathring{H}(\sym)} &=\frac{1}{\alpha} (\Cb\Vu,\Cb\Vv)_{[L^2]^2} + (\sym(\partial \Vu),\sym(\partial\Vv))_{[L^2]^{2\times2}}.
    \end{aligned}
\end{equation}

\subsubsection{Comparison of mesh quality for small $\alpha$}
We begin by choosing $\alpha=10^{-2}$; this leads to deformations that are close to perfectly conformal.
In Figure~\ref{fig:levelset-fem-optim-shapes} we show the optimal shape obtained from each of the four inner products in \eqref{eq:four-inner-products-fem}.
Using the first inner product, which corresponds to solving Laplace's equations with Neumann data given by the shape derivative, the optimisation algorithm fails after a few iterations as the mesh has degenerated (cf. the first row in Figure~\ref{fig:levelset-fem-optim-shapes}).
The inner-product that uses the symmetric part of the gradient performs significantly better and using it the L-BFGS algorithm converges to the expected optimal shape.
However, looking closer at the area of high curvature we can see that some of the elements have been significantly stretched (cf. the second row in Figure~\ref{fig:levelset-fem-optim-shapes}).
Adding the Cauchy-Riemann regularisation to each of the two inner products we can see that all triangles remain close to unilateral (cf. the third and fourth row in Figure~\ref{fig:levelset-fem-optim-shapes}).
The nearly conformal mappings achieve this by changing the size of the elements: elements are shrunk where large deformations are necessary and magnified elsewhere.

We can quantify these findings by considering a histogram plot of the element quality $\eta(K)=h(k)/(2\rho(K))$.
Figure \ref{fig:mesh-quality-levelset-histogram-fem} shows the element quality of the initial mesh as well as of the final meshes obtained from using the inner-products $(\cdot, \cdot)_{\mathring{H}(\sym)}$, $(\cdot, \cdot)_{\CR(\alpha)+\mathring{H}^1}$ and $(\cdot, \cdot)_{\CR(\alpha)+\mathring{H}(\sym)}$.
We omit the inner-product $(\cdot, \cdot)_{H^1}$ as in that case the mesh degenerates entirely.

When using $(\cdot, \cdot)_{\mathring{H}(\sym)}$, the mesh quality decreases overall and a significant number of elements has a ratio $\eta(K)$ of greater than $2$.
Contrary to that, when considering the nearly conformal mappings obtained from our proposed inner-products, the quality for each element stays essentially constant and no degradation is visible.
\begin{figure}[H]
    \centering
    \includegraphics[width=5cm]{./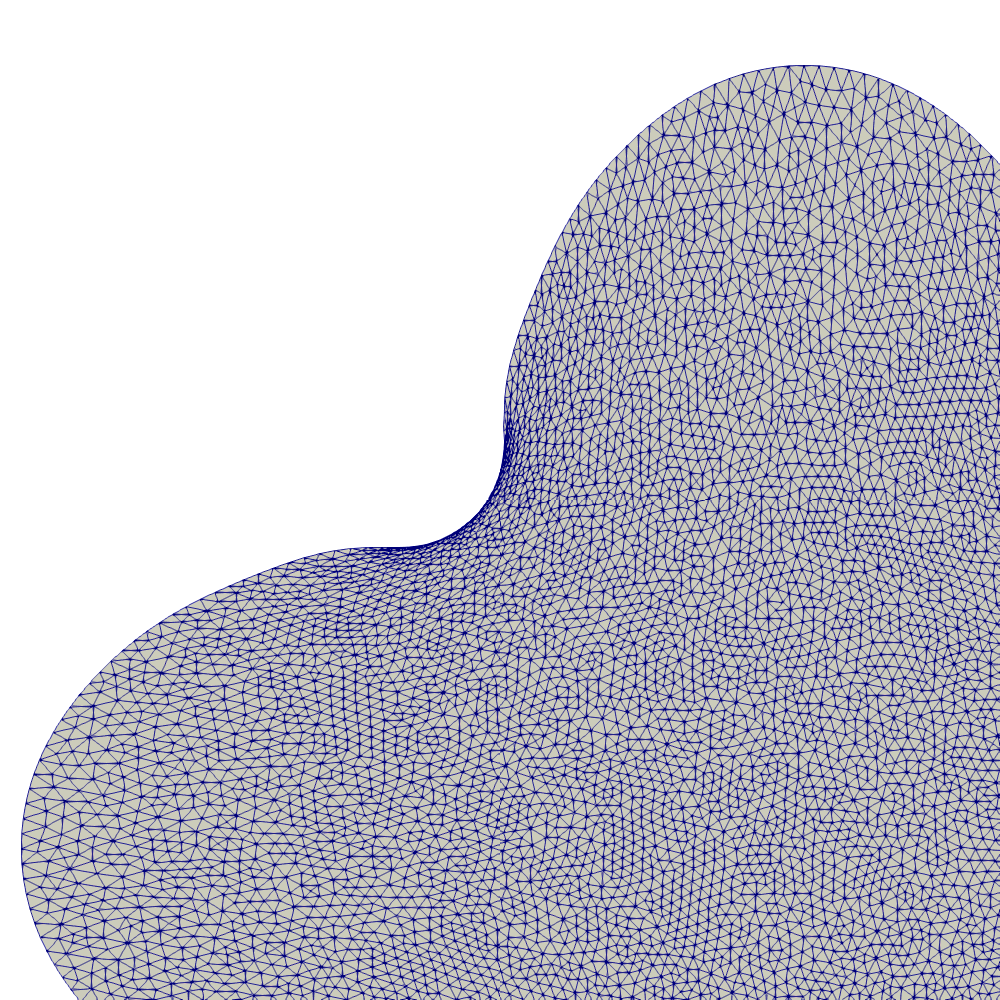}
    \includegraphics[width=5cm]{./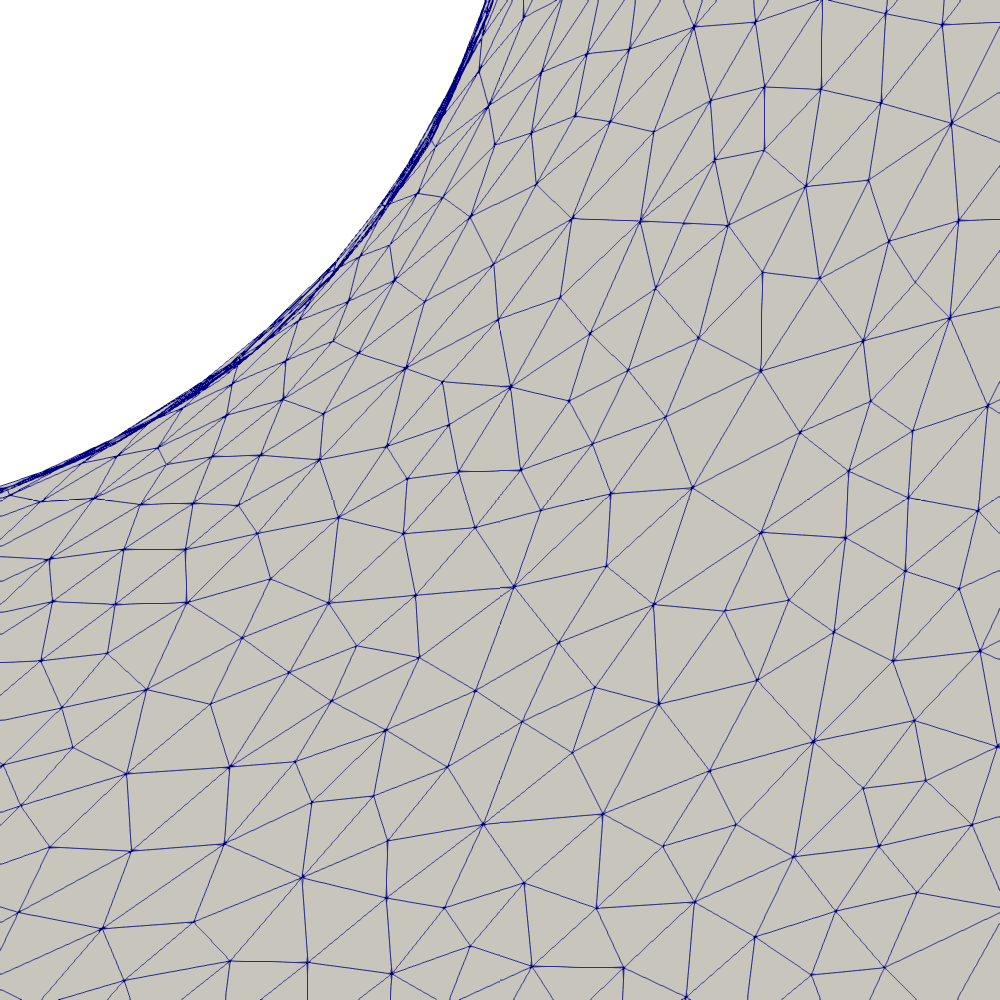}
    \includegraphics[width=5cm]{./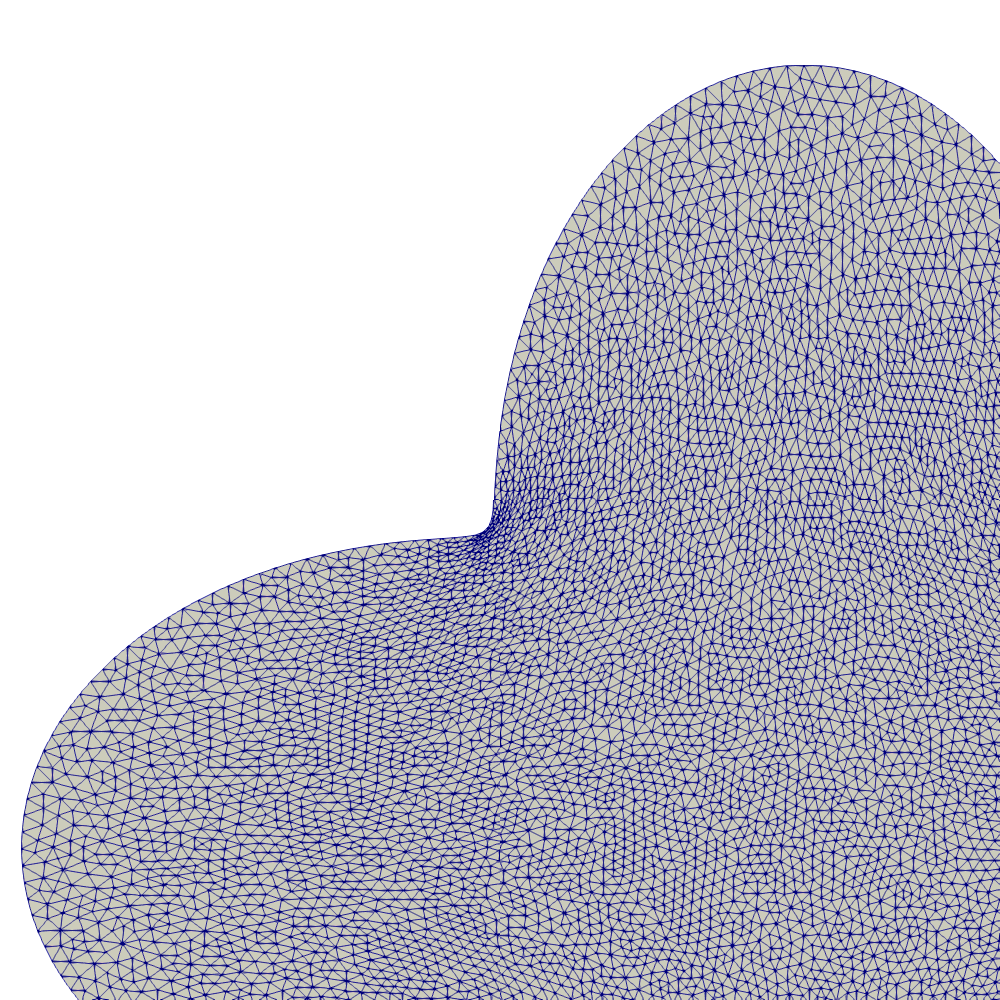}
    \includegraphics[width=5cm]{./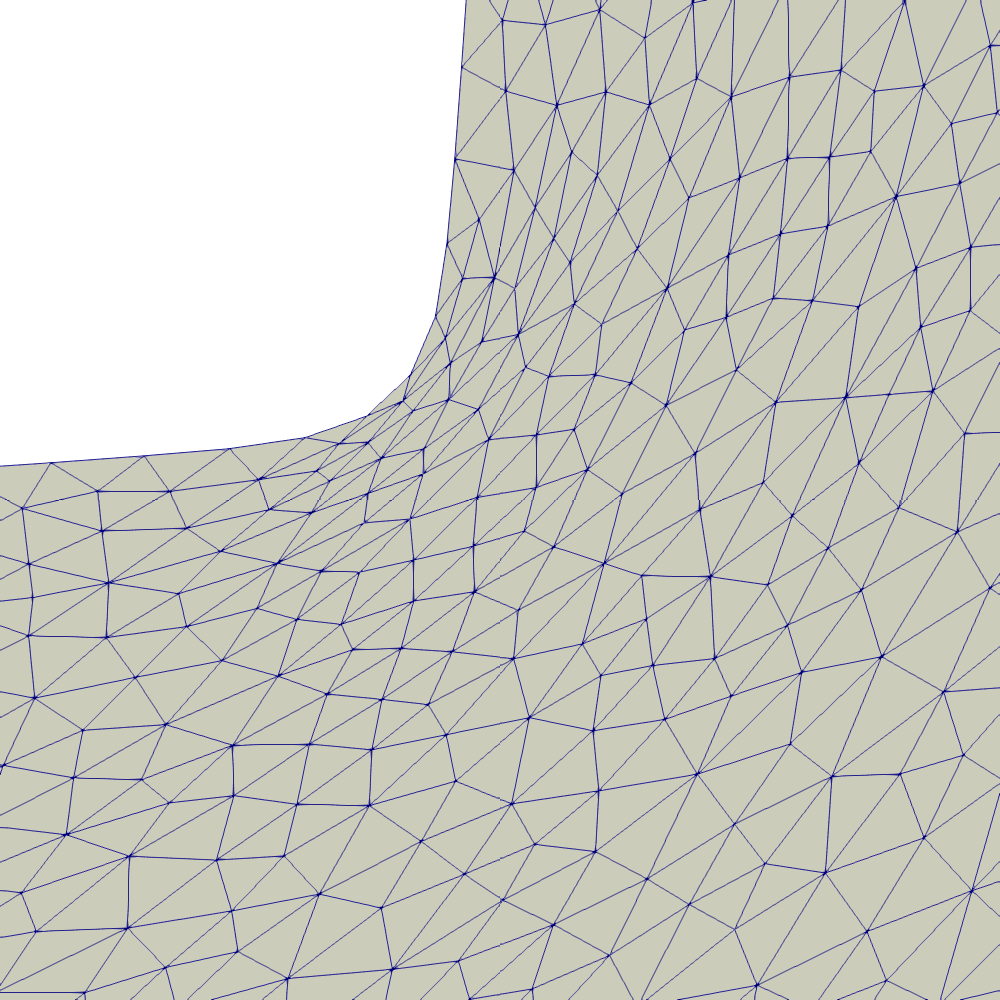}
    \includegraphics[width=5cm]{./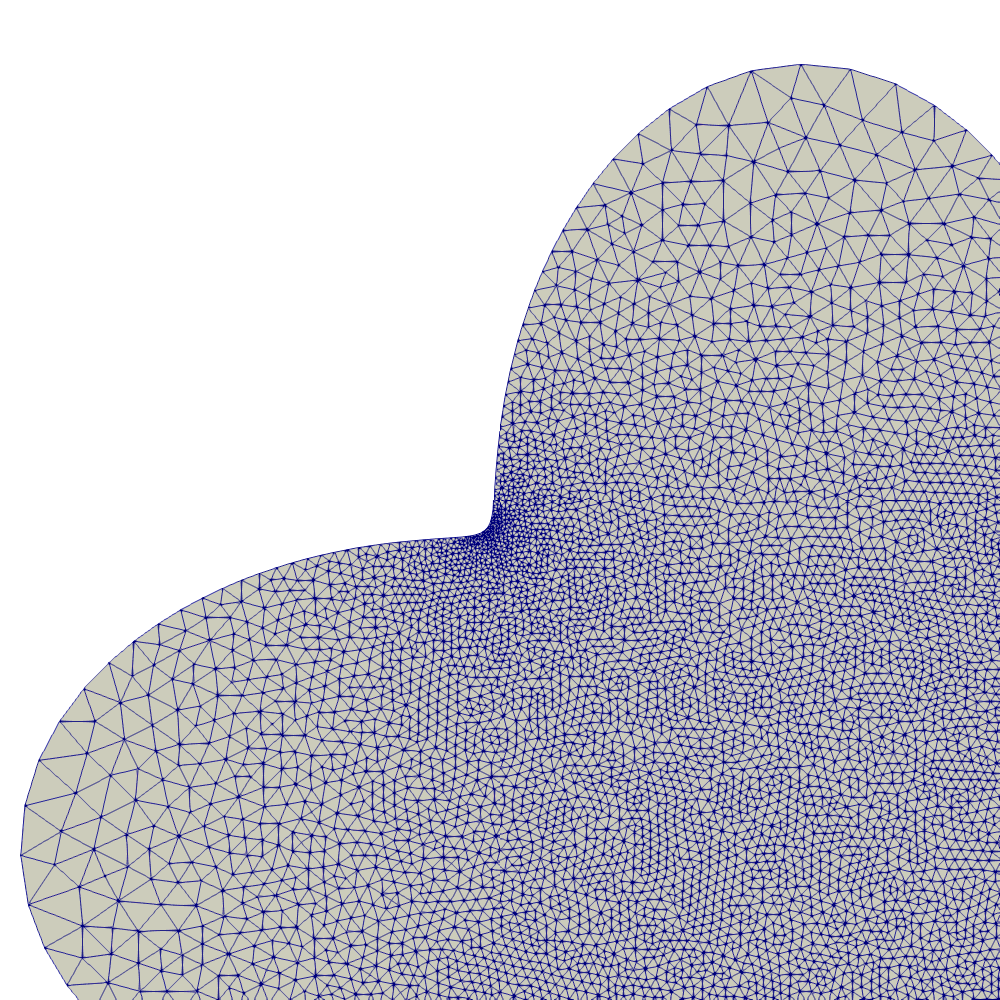}
    \includegraphics[width=5cm]{./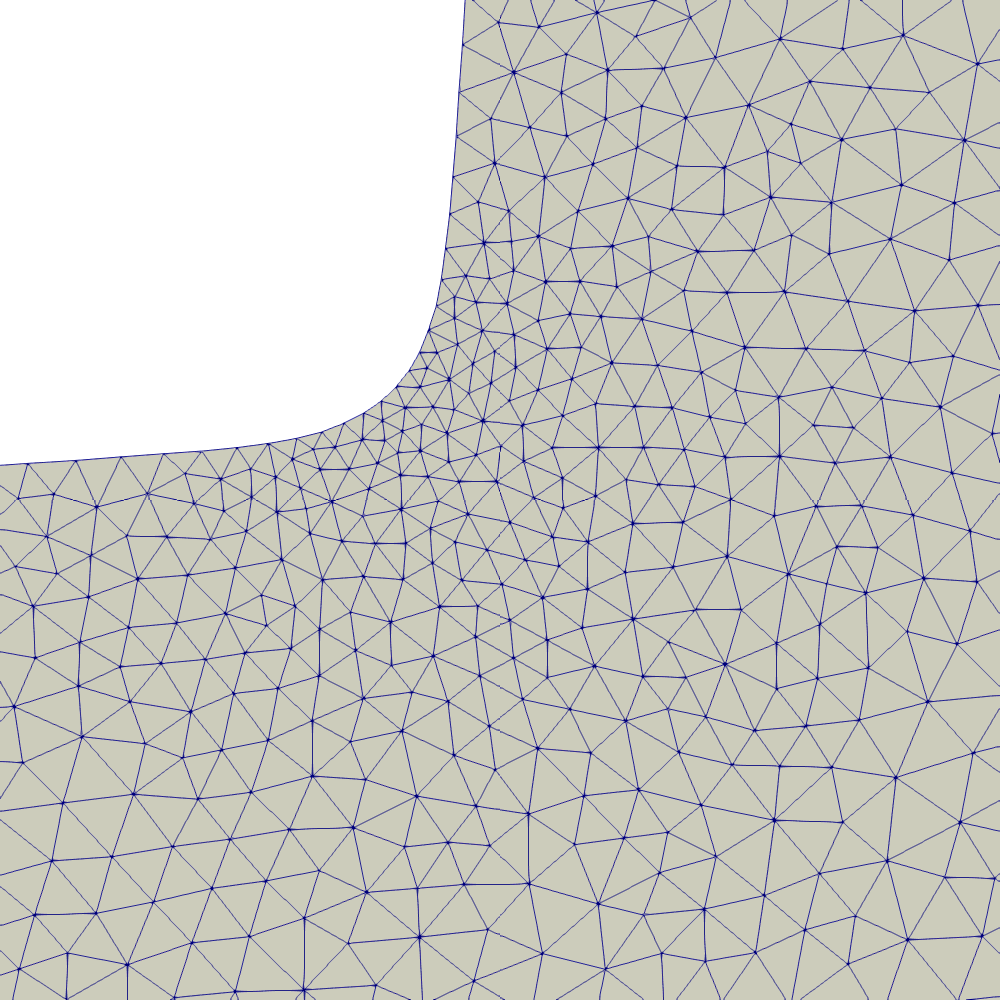}
    \includegraphics[width=5cm]{./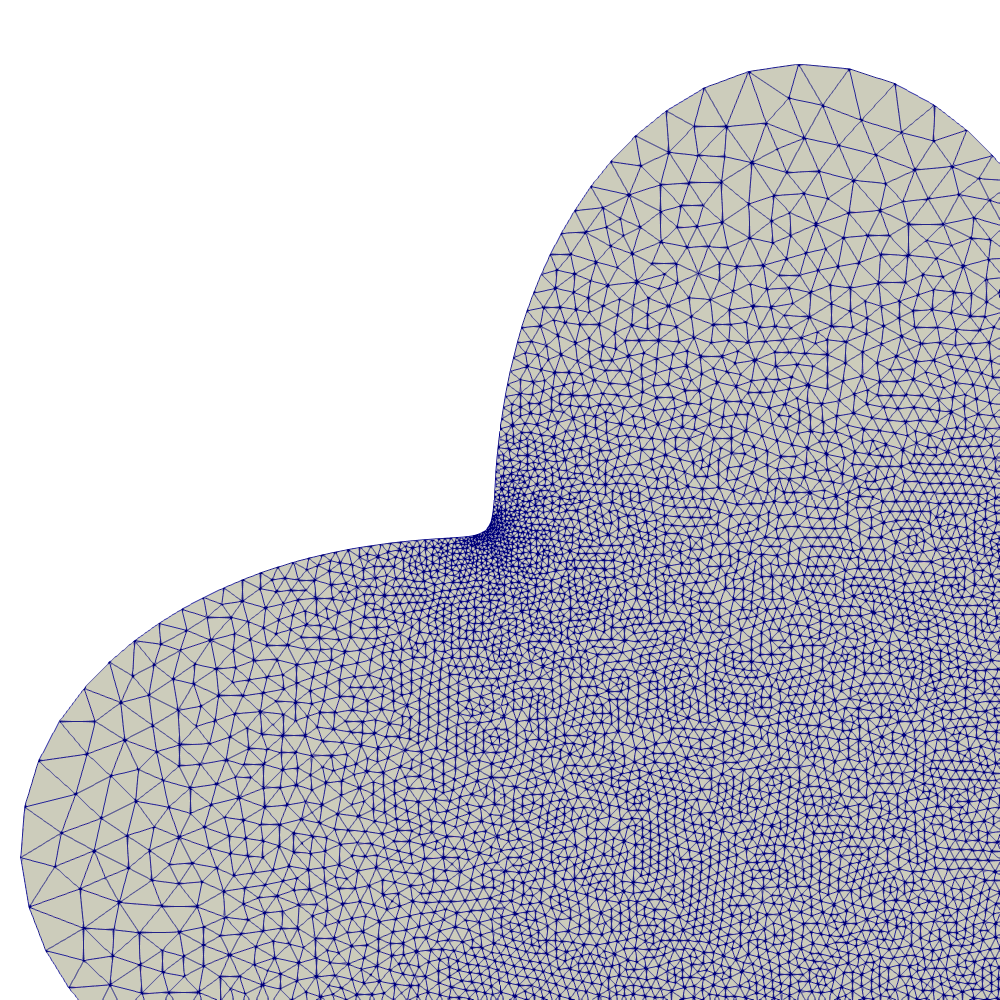}
    \includegraphics[width=5cm]{./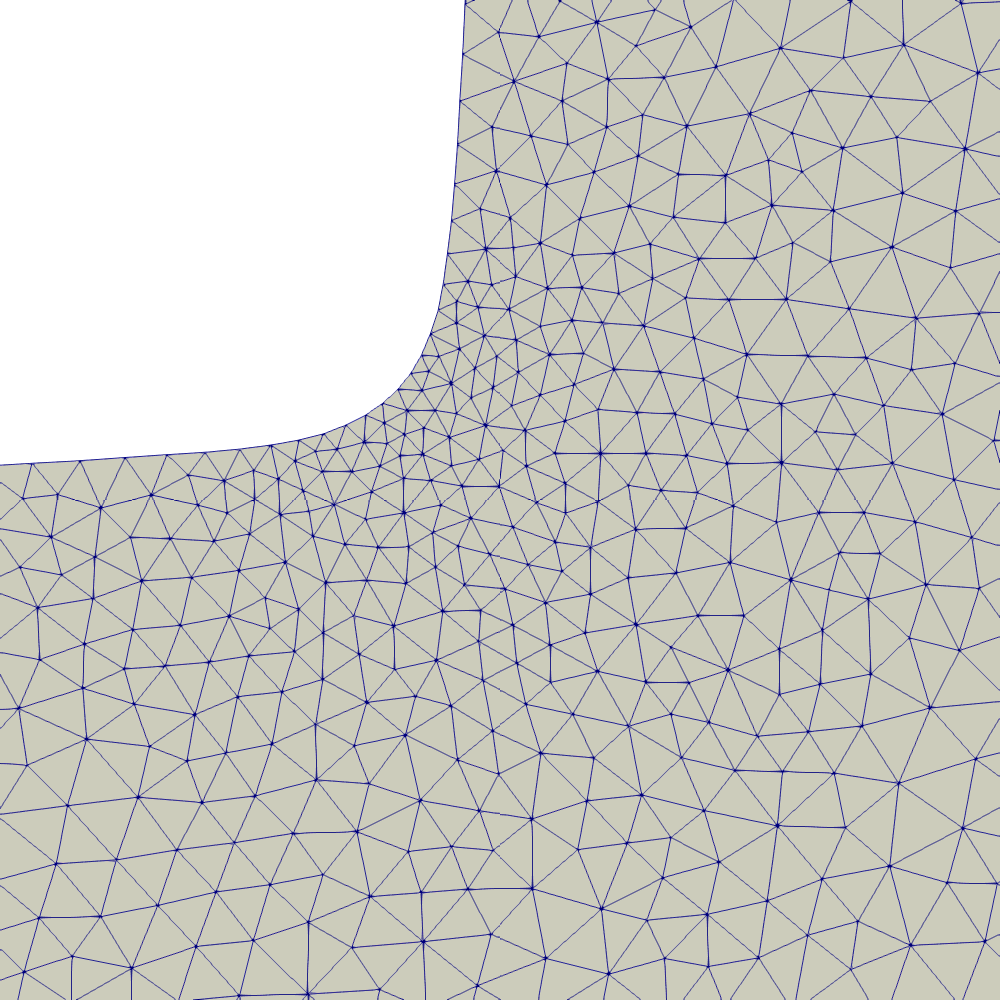}
    \caption{Optimal shapes obtained using the different inner-products. 
        First row: $(\cdot, \cdot)_{\mathring{H}^1}$ (last shape before the mesh degenerates shown).
        Second row: $(\cdot, \cdot)_{\mathring{H}(\sym)}$.
        Third row: $(\cdot, \cdot)_{\CR(10^{-2})+ \mathring{H}^1}$.
        Fourth row: $(\cdot, \cdot)_{\CR(10^{-2})+ \mathring{H}(\sym)}$.
    }
    \label{fig:levelset-fem-optim-shapes}
\end{figure}
\begin{figure}[H]
    \centering
    \includegraphics[width=6.2cm]{./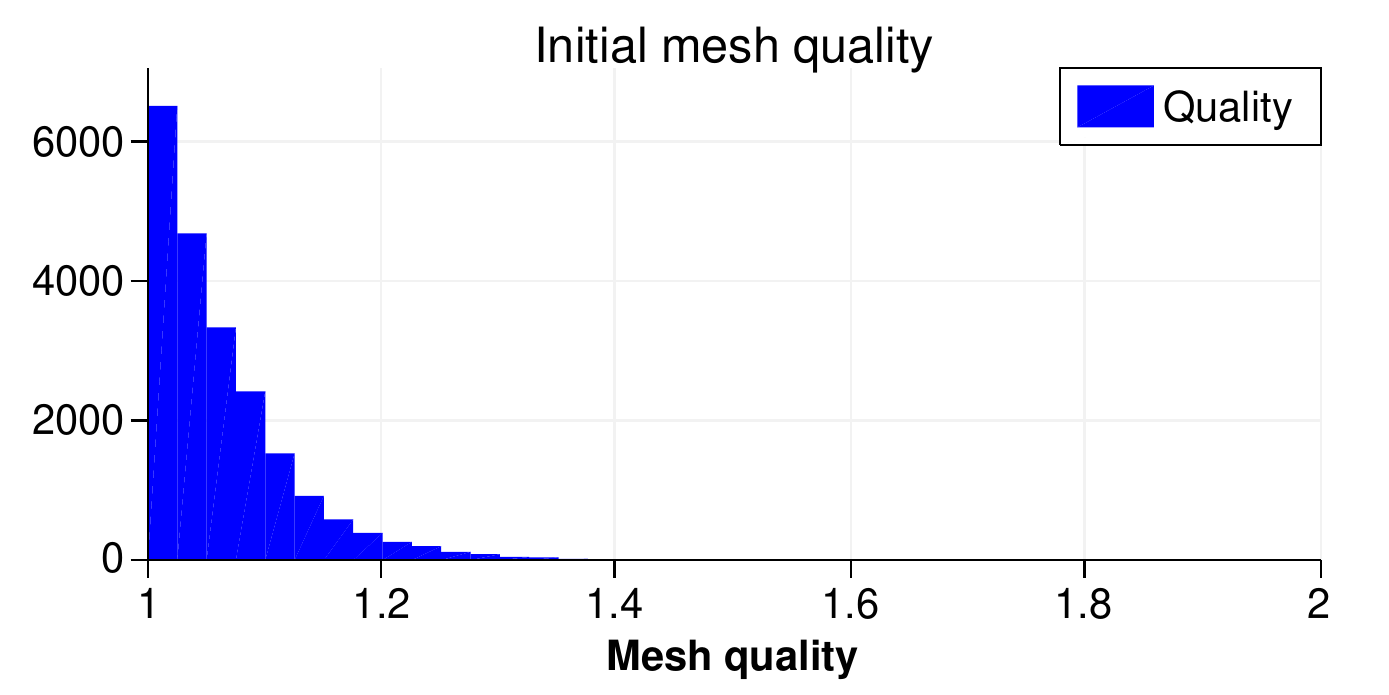}
    \includegraphics[width=6.2cm]{./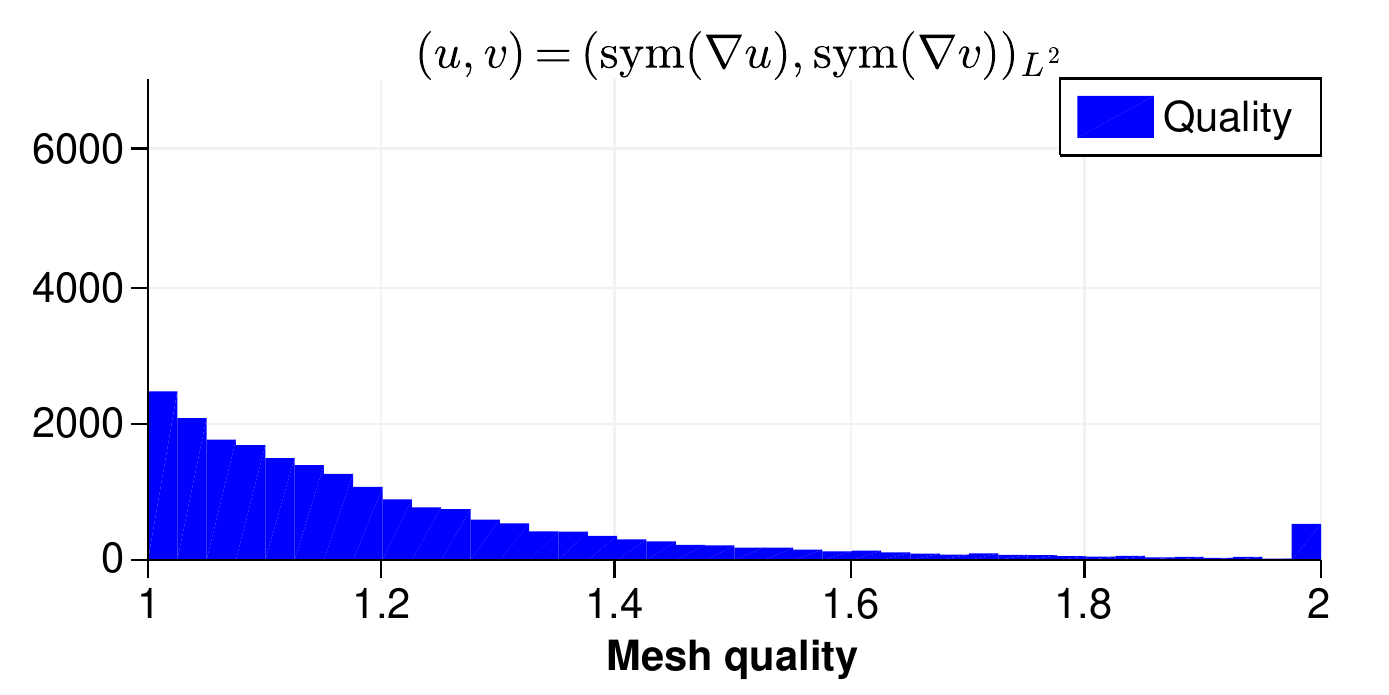}
    \includegraphics[width=6.2cm]{./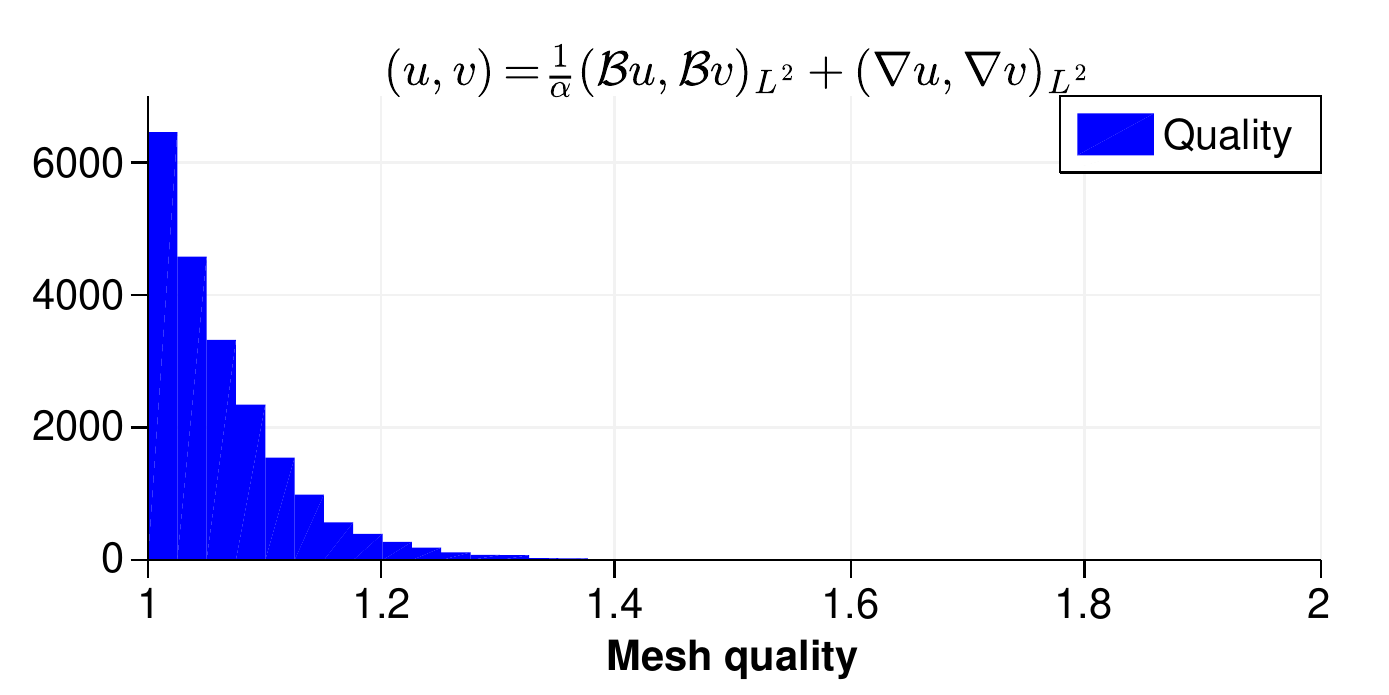}
    \includegraphics[width=6.2cm]{./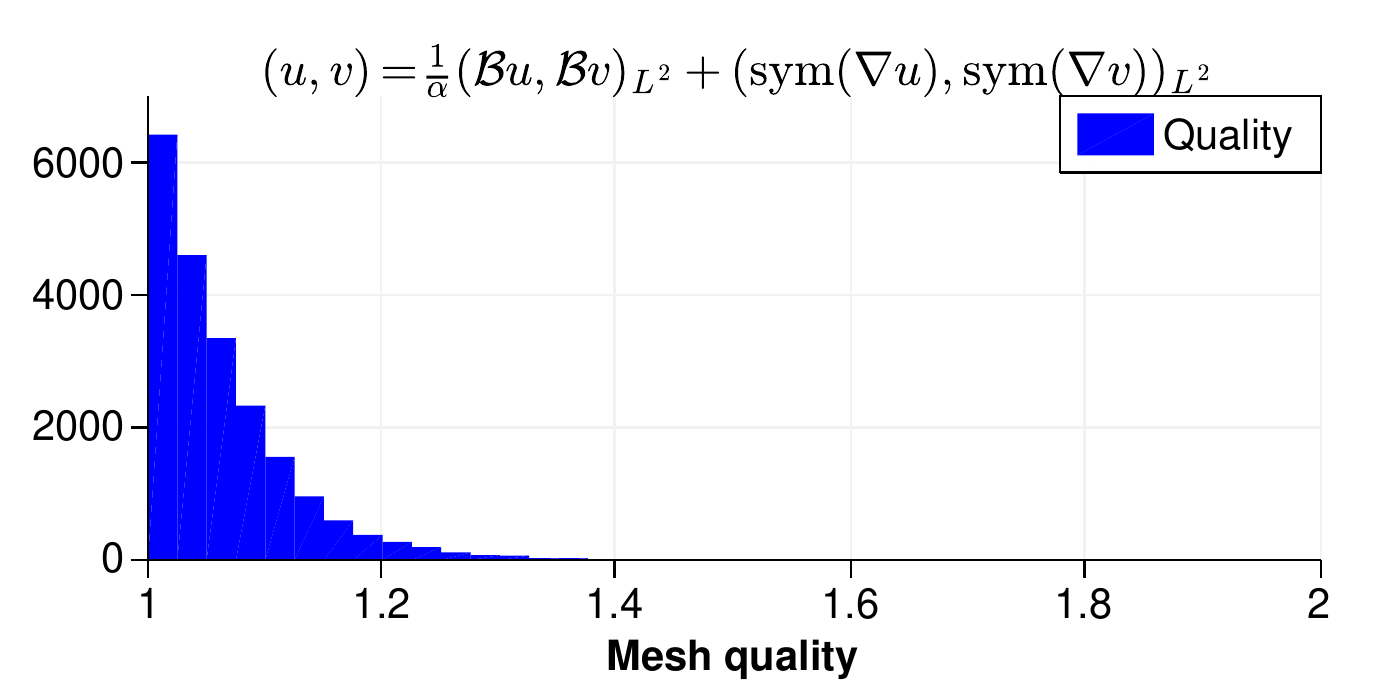}
    \caption{Mesh quality as measured via \eqref{eqn:mesh-quality}. 
        Top left: initial mesh.
        Top right: $(\cdot, \cdot)_{\mathring{H}(\sym)}$.
        Bottom left: $(\cdot, \cdot)_{\CR(10^{-2})+ \mathring{H}^1}$.
        Bottom right: $(\cdot, \cdot)_{\CR(10^{-2})+ \mathring{H}(\sym)}$.
    }
    \label{fig:mesh-quality-levelset-histogram-fem}

\end{figure}
\subsubsection{Behaviour for larger $\alpha$}
In order to achieve deformations that are nearly conformal, Proposition~\ref{prop:solution-decomposition} says that we should choose $\alpha$ very small. 
However, while this does indeed lead to very good meshes, there are two disadvantages to picking $\alpha$ too small.
First, the linear mapping associated with the Riesz map that is used becomes poorly conditioned and calculating the gradient becomes more computationally expensive.
However, this is usually not a problem as the bottleneck for most applications lies in the calculation of the state and the adjoint equation and the calculation of the gradient is comparatively cheap.
\begin{figure}[H]
    \centering
    \includegraphics[width=12cm]{./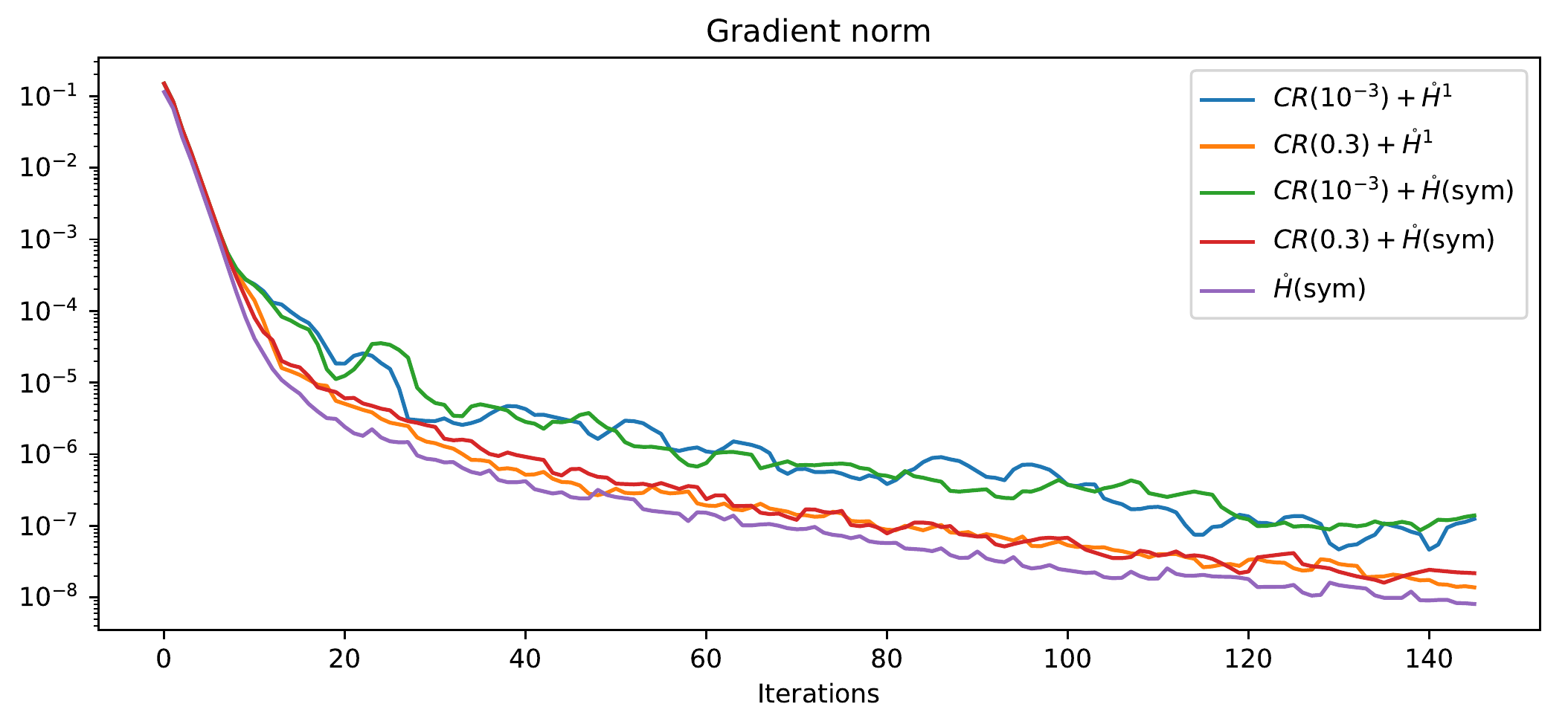}
    \caption{Convergence history of the L-BFGS algorithm with memory $m=5$. To improve readability we plot the moving average of the gradient over five iterations.}
    \label{fig:levelset-fem-convergence-speed}
\end{figure}

Second, restricting the optimisation to mappings that are almost entirely conformal means that the algorithm will need more steps to converge; this can be seen in Figure~\ref{fig:levelset-fem-convergence-speed}.
For $\alpha=10^{-3}$, the gradient is approximately  two orders of magnitude larger than without the added Cauchy-Riemann terms.
As $\alpha$ increases, this difference becomes smaller.
While for small $\alpha$ the two inner products $(\cdot, \cdot)_{\CR(\alpha)+\mathring{H}^1}$ and $(\cdot, \cdot)_{\CR(\alpha)+\mathring{H}(\sym)}$ performed almost identically in terms of mesh quality, we expect to see a difference as $\alpha$ becomes bigger.
In Figure~\ref{ref:levelset-fem-large-alpha} we show a close-up of the high curvature area for comparatively large $\alpha=0.3$.
The nearly conformal mappings using the inner-product $(\cdot, \cdot)_{\CR(\alpha)+\mathring{H}(\sym)}$ show an excellent mesh quality, while the ones based on $(\cdot, \cdot)_{\CR(\alpha)+\mathring{H}^1}$ lead to stretched elements.
As Figure~\ref{fig:levelset-fem-convergence-speed} shows no difference in speed for the two variants, we will use the inner-product based on the symmetric gradient from now on.
\begin{figure}[h]
    \centering
    \includegraphics[width=6cm]{./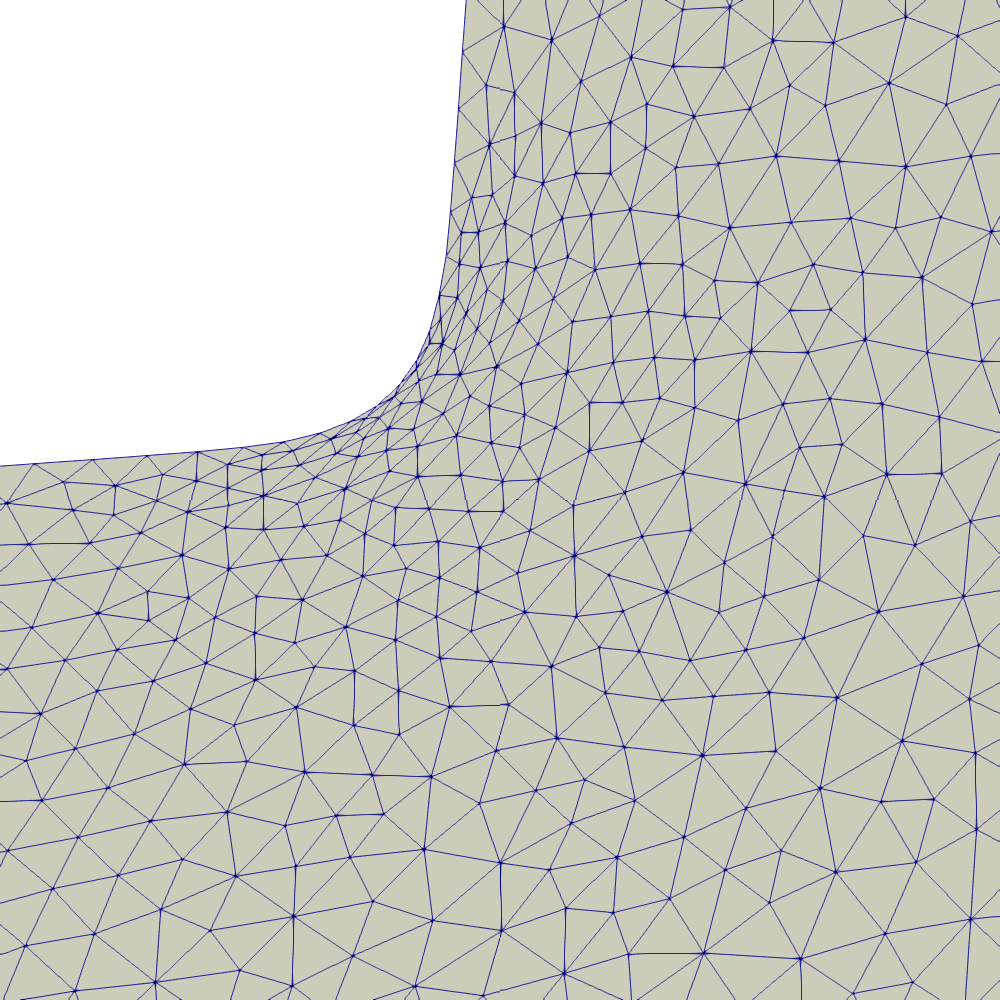}
    \includegraphics[width=6cm]{./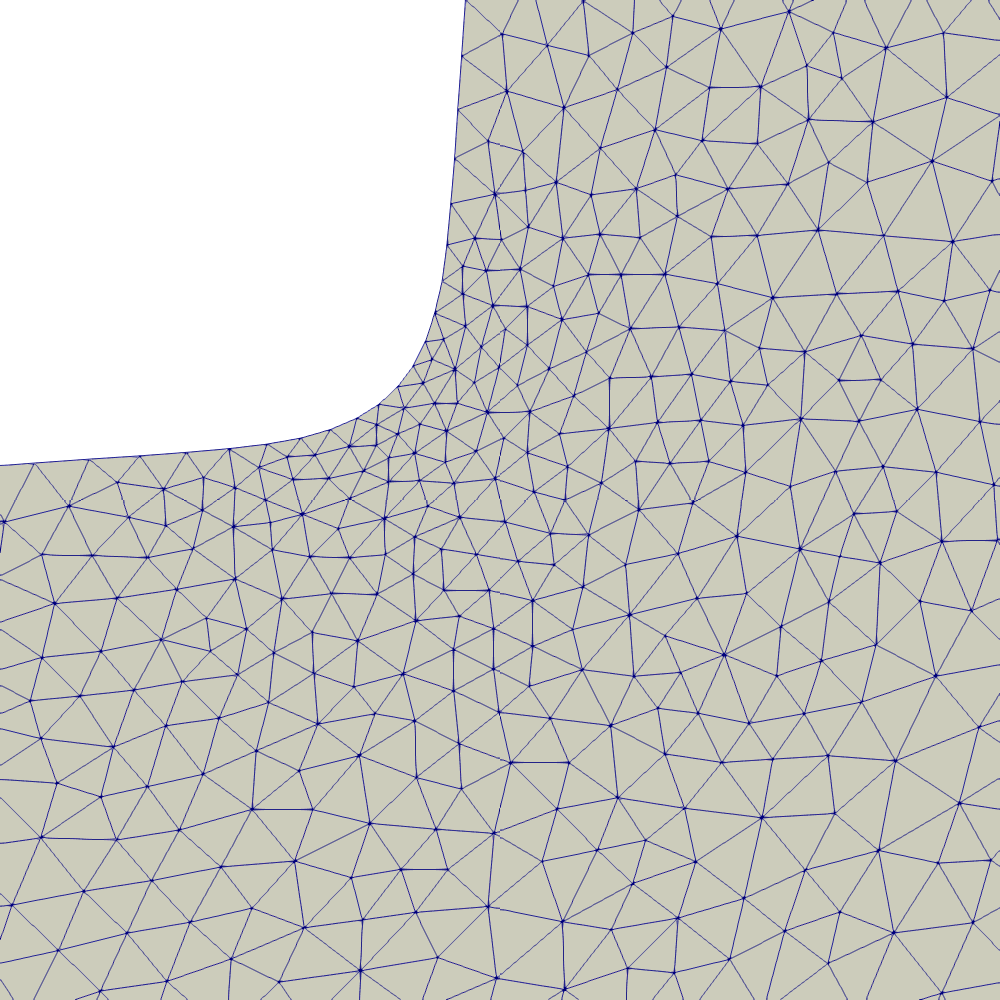}
    \caption{
        Close up of the obtained shape using $(\cdot, \cdot)_{\CR(\alpha)+\mathring{H}^1}$ (left) and $(\cdot, \cdot)_{\CR(\alpha)+\mathring{H}(\sym)}$ (right) for $\alpha=0.3$.
    }
    \label{ref:levelset-fem-large-alpha}
\end{figure}
\FloatBarrier
\subsection{Levelset example in RKHS}
We now consider the same shape optimisation problem but we discretise the deformations using the Wendland kernel in \eqref{eqn:wendland-kernel-definition}.
The levelset function is again discretised using finite element functions on a triangular mesh.
The vertices of this mesh are used as the points $p_\ell$ at which we enforce conformality in \eqref{eq:min_CR_relaxed_pw}.

Associating the vector $\Vu_h$ and $\Vv_h$ with functions $\Vu$ and $\Vv$ in $[\Ch(\Omega)]^2$, the inner-products that we compare are given by 
\begin{equation}
    \begin{aligned}
        (\Vu, \Vv)_{[\Ch]^2} &= \Vu_h^\top K \Vv_h,\\
        (\Vu, \Vv)_{\CR(\alpha)+ [\Ch]^2} &= \frac{1}{\alpha}\frac{1}{n}\sum_{k=1}^n \Cb \Vu(\Vp_k)\cdot \Cb\Vv(\Vp_k)  + (\Vu, \Vv)_{[\Ch]^2},
    \end{aligned}
\end{equation}
where $K$ is the matrix defined in \eqref{eq:matrices_pw}.

In Figure~\ref{fig:levelset-kernel-optim-shapes} we can see the optimal shapes obtained; as before we can see that adding the Cauchy-Riemann term to the inner-product leads to a significant improvement.
\begin{figure}
    \centering
    \includegraphics[width=5cm]{./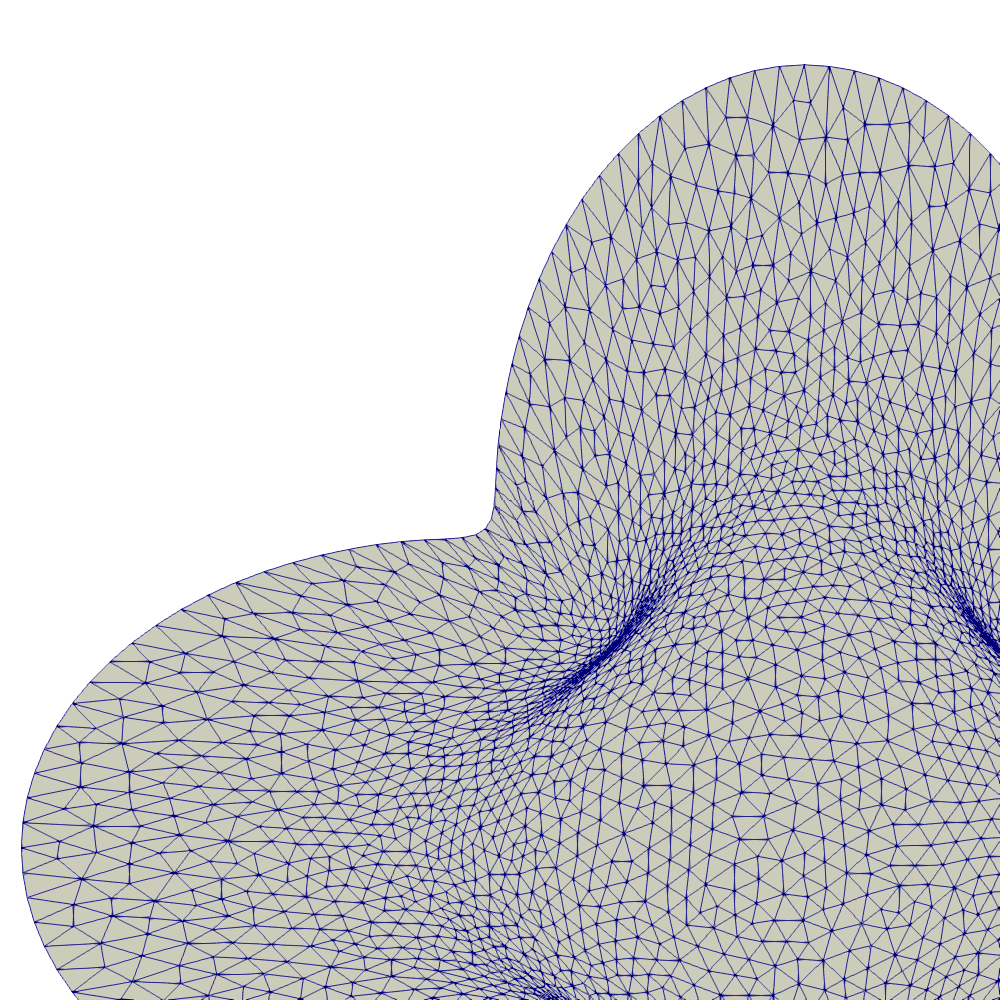}
    \includegraphics[width=5cm]{./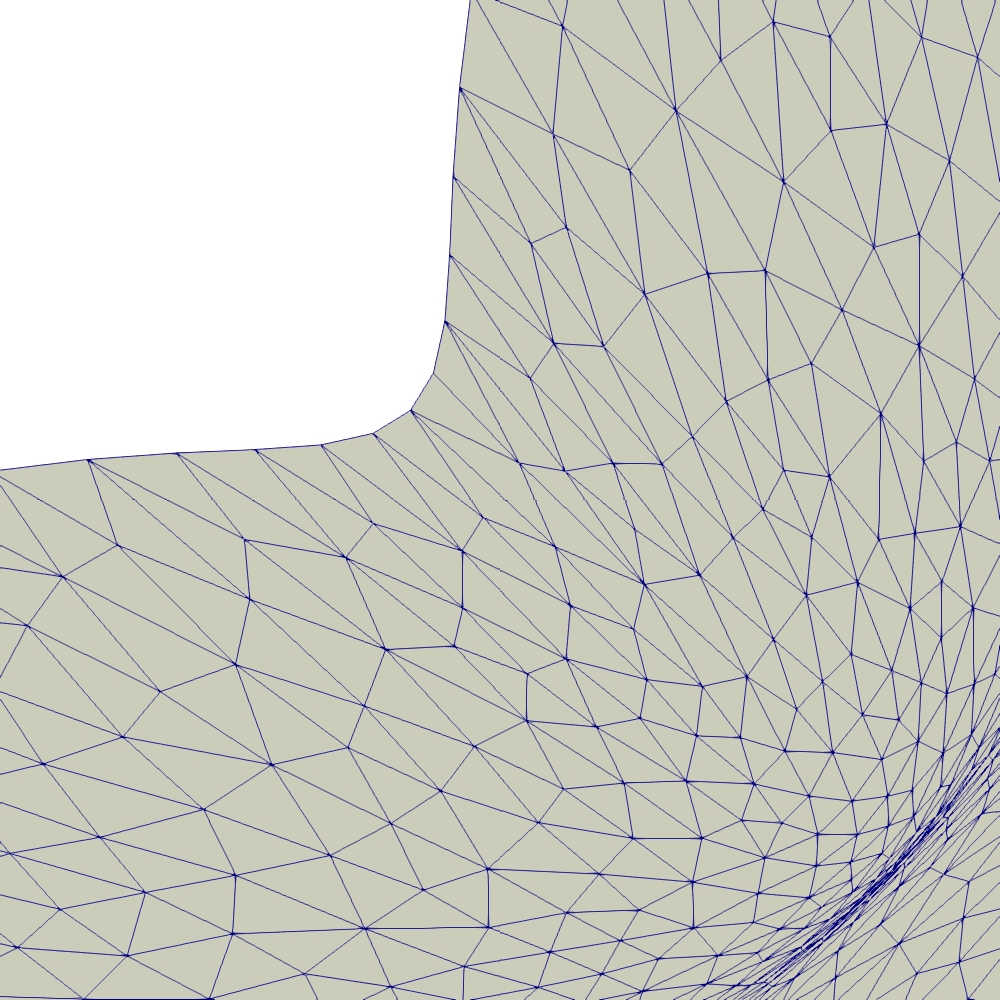}
    \includegraphics[width=5cm]{./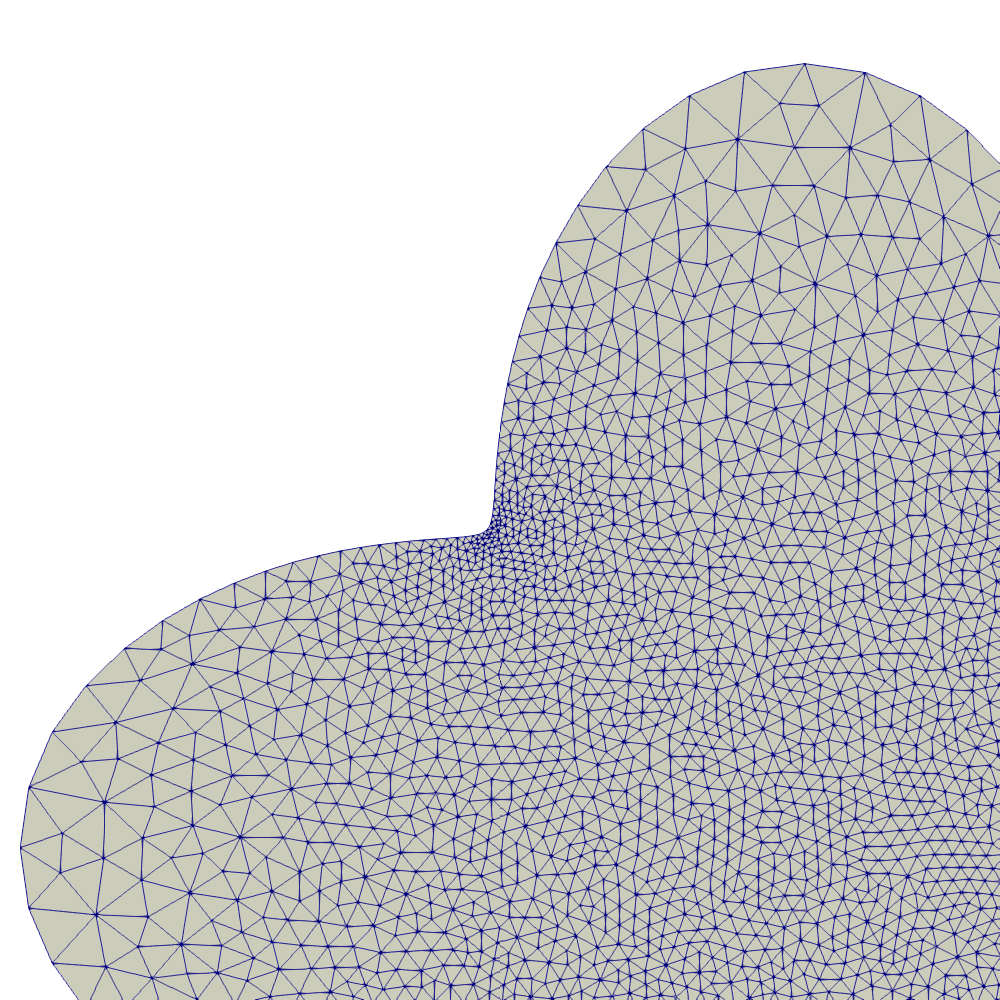}
    \includegraphics[width=5cm]{./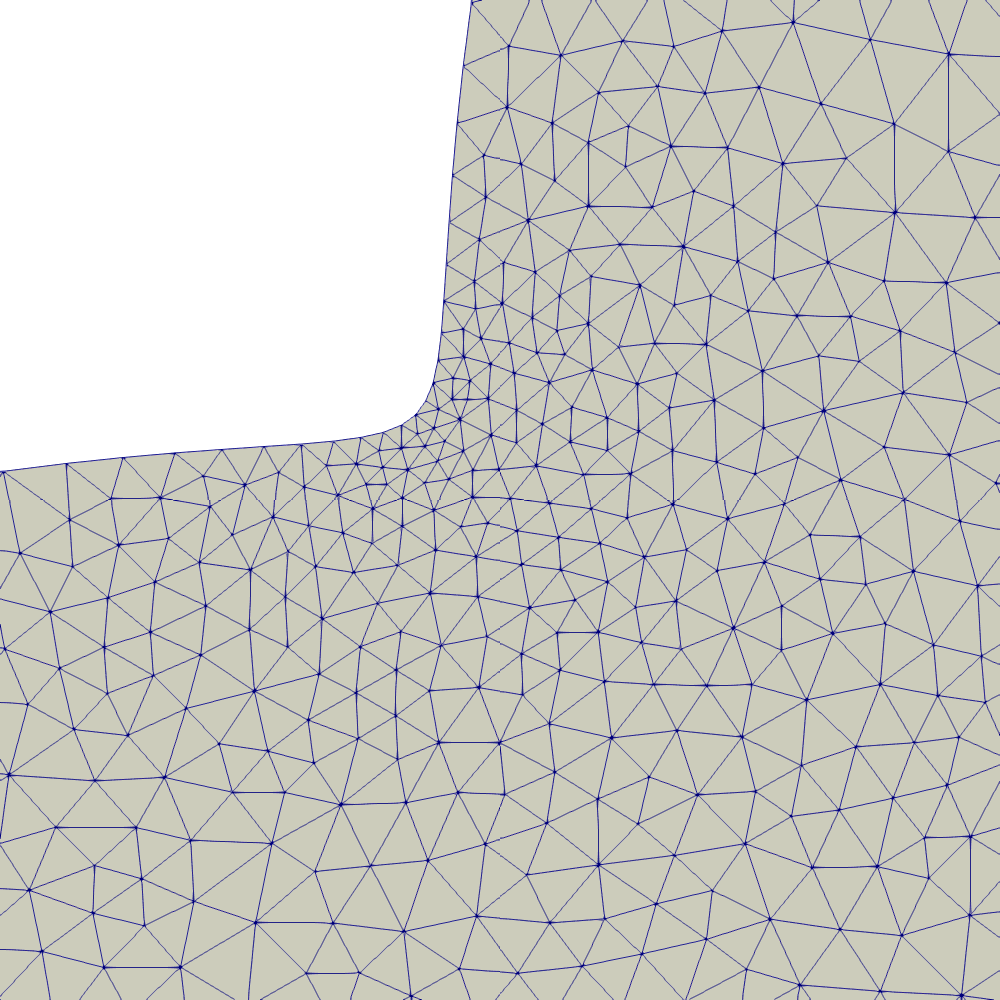}
    \caption{Optimal shapes obtained using the inner product  $(\cdot, \cdot)_{[\Ch]^2}$ (top row) and $(\cdot, \cdot)_{\CR(\alpha)+[\Ch]^2}$ for $\alpha=10^{-4}$ (bottom row).}
    \label{fig:levelset-kernel-optim-shapes}
\end{figure}
To quantify this we look at the distribution of element quality and we can see that the quality remains essentially unchanged for the new inner product, whereas it significantly degrades when using the inner product induced by the kernels alone, see Figure~\ref{fig:levelset-kernel-mesh-quality}.
\begin{figure}[H]
    \centering
    \includegraphics[width=6.2cm]{./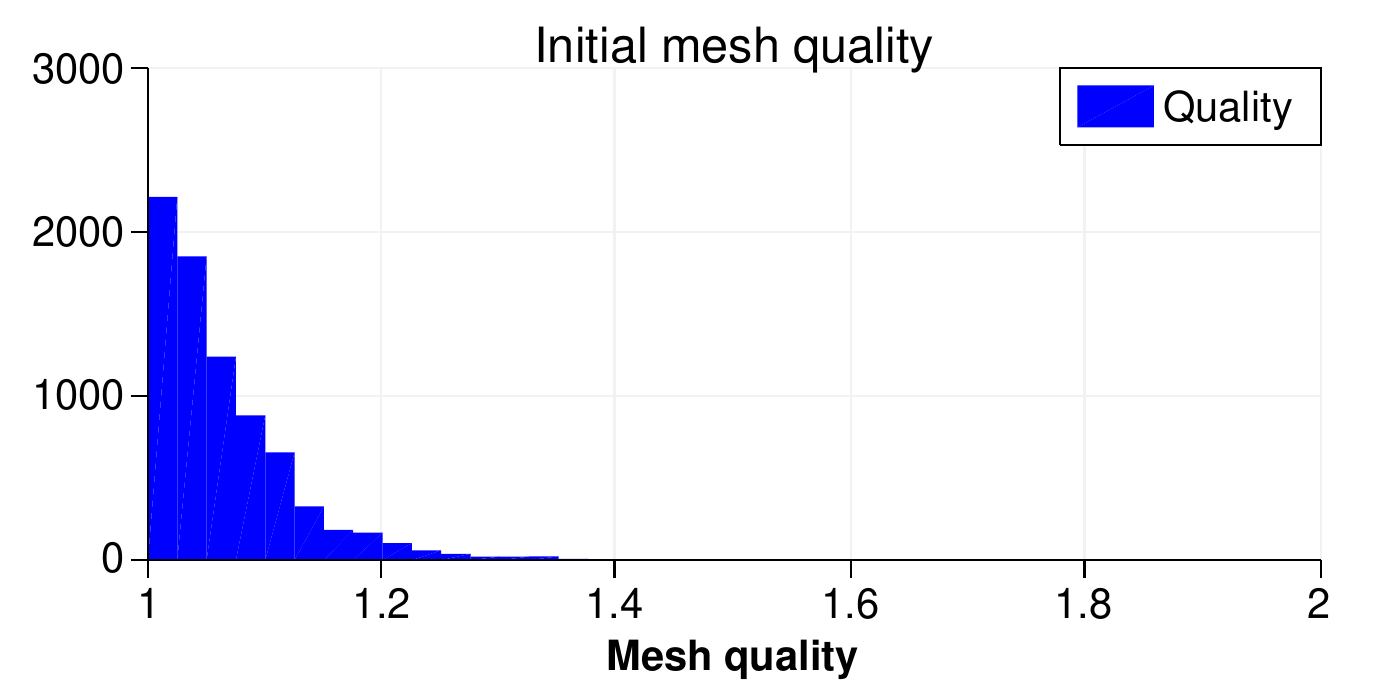}
    \includegraphics[width=6.2cm]{./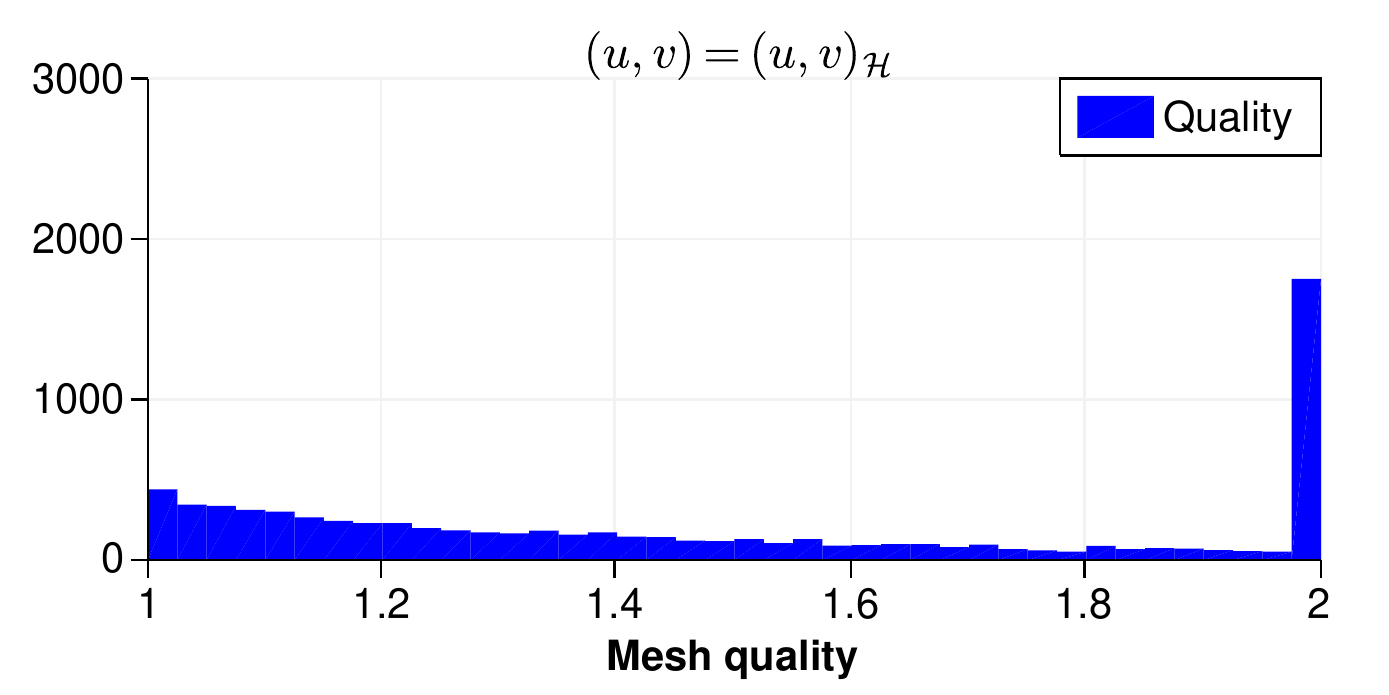}
    \includegraphics[width=6.2cm]{./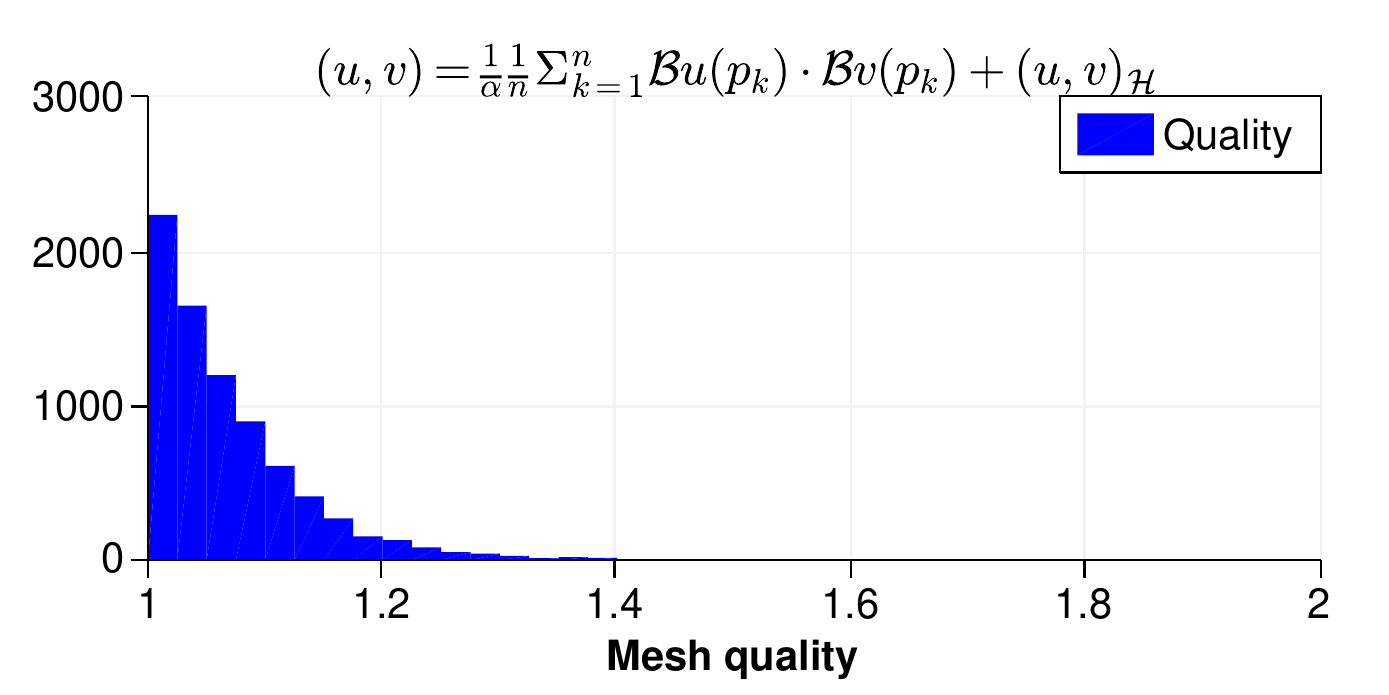}
    \caption{Mesh quality as measured via \eqref{eqn:mesh-quality}. 
        Top row, left: initial mesh.
        Top row, right: final mesh obtained using $(\cdot, \cdot)_{[\Ch]^2}$.
        Bottom row: final mesh obtained using $(\cdot, \cdot)_{\CR(\alpha)+[\Ch]^2}$ for $\alpha=10^{-4}$.
    }
    \label{fig:levelset-kernel-mesh-quality}
\end{figure}
\subsection{A negative example: annulus deformation}
We saw in the previous section that if a conformal mapping between the initial and the optimal shape exists, then our proposed inner-products yield excellent mesh quality.
However, the Riemannian mapping does not hold for domains that are not simply connected.
The simplest example is that of two annuli:
let $A(r,R) := \{ \Vz:\; r<|\Vz|<R\}$ denote an annulus of outer radius $R$ and inner radius  $r$.
Schottky \cite{schottky1877konforme} proved that the annulus $A(r,R)$ can be conformally mapped to the annulus $A(r', R')$ if and only if $\frac{R}{r} = \frac{R'}{r'}$.

In regard of mesh deformation, this is a somewhat sobering result as it implies, that two annuli that do not have the same radii-ratio can never be mapped to each other without loss of mesh quality. 
To illustrate this, we pick $R=R'=1$ and $r=\frac{1}{2}$.
Now for $r'\in(0,1)$ we define $f_{r'}(\Vx) = |\Vx-1||\Vx-r'|$ and the corresponding levelset function $J_{r'}(\Omega) \defeq \int_{\Omega} f_{r'}(\Vx)\dr \Vx$.
The annulus $A(r', R')$ is the global minimiser of this function.

We choose the inner-product given by 
\begin{equation}
    (\Vu, \Vv)_{\CR(10^{-2},\mu)+\mathring{H}(\sym)} = 10^2(\mu \Cb\Vu,\mu\Cb\Vv)_{[L^2]^2} + (\sym(\partial\Vu),\sym(\partial\Vv))_{[L^2]^{2\times 2}}
\end{equation}
for this example.
Figure~\ref{fig:annulus-optimal-shapes} show the initial shape as well as the optimal shapes for values $r'=0.7$ and $r'=0.85$.
We can see that as the inner radius is increased, the mesh elements are compressed more and more.

Computations using the standard $[H^1(\Omega)]^2$ or $\mathring{H}^1(\sym, \Omega)$ inner-products lead to the same observation, and in fact, in \cite{IWKOON11} it is proved that there exists a harmonic homeomorphism $A(r,R) \to A(r_*, R_*)$ if and only if $\frac{R_*}{r_*} \ge \frac12\left( \frac{R}{r}+ \frac{r}{R} \right)$. This shows that while for simply-connected domains large deformations can be performed without loss of mesh quality, once the topology of the domain changes the problem of mesh deformation becomes significantly harder.
\begin{figure}[H]
    \centering
    \includegraphics[width=0.32\textwidth]{./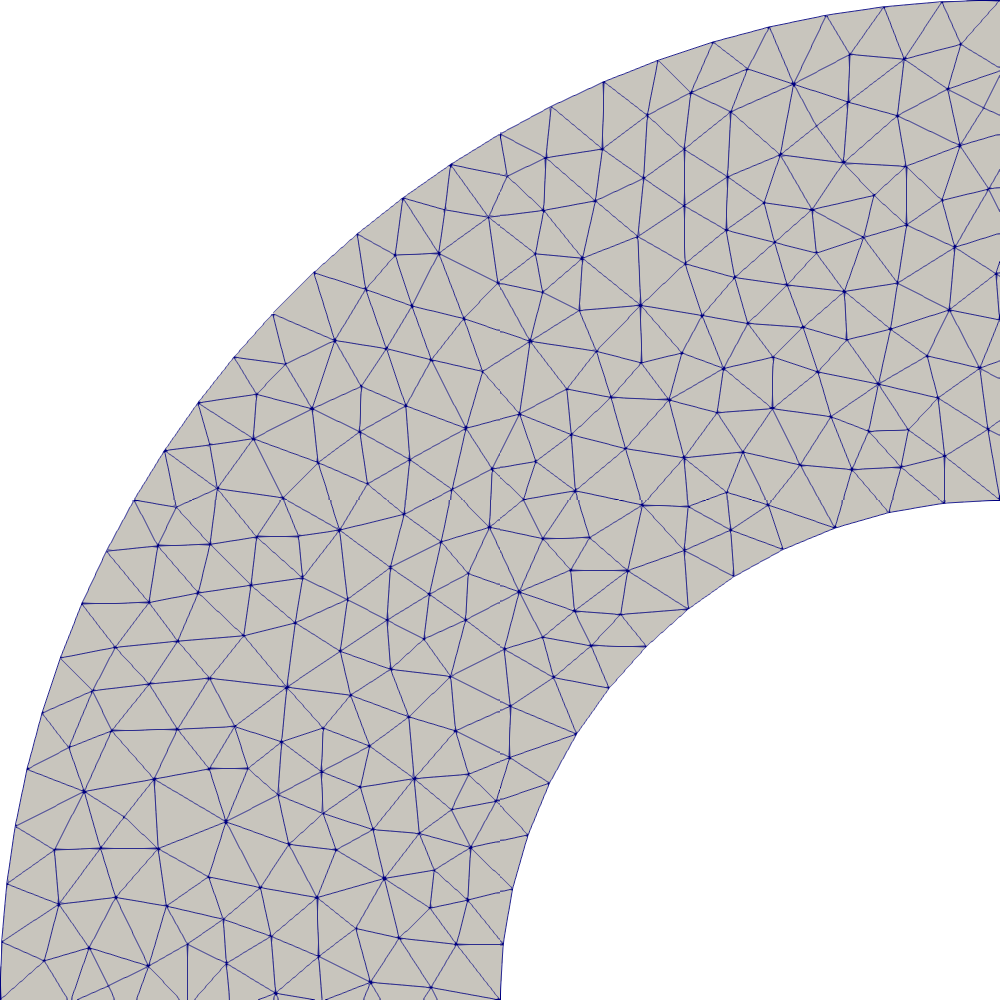}
    \includegraphics[width=0.32\textwidth]{./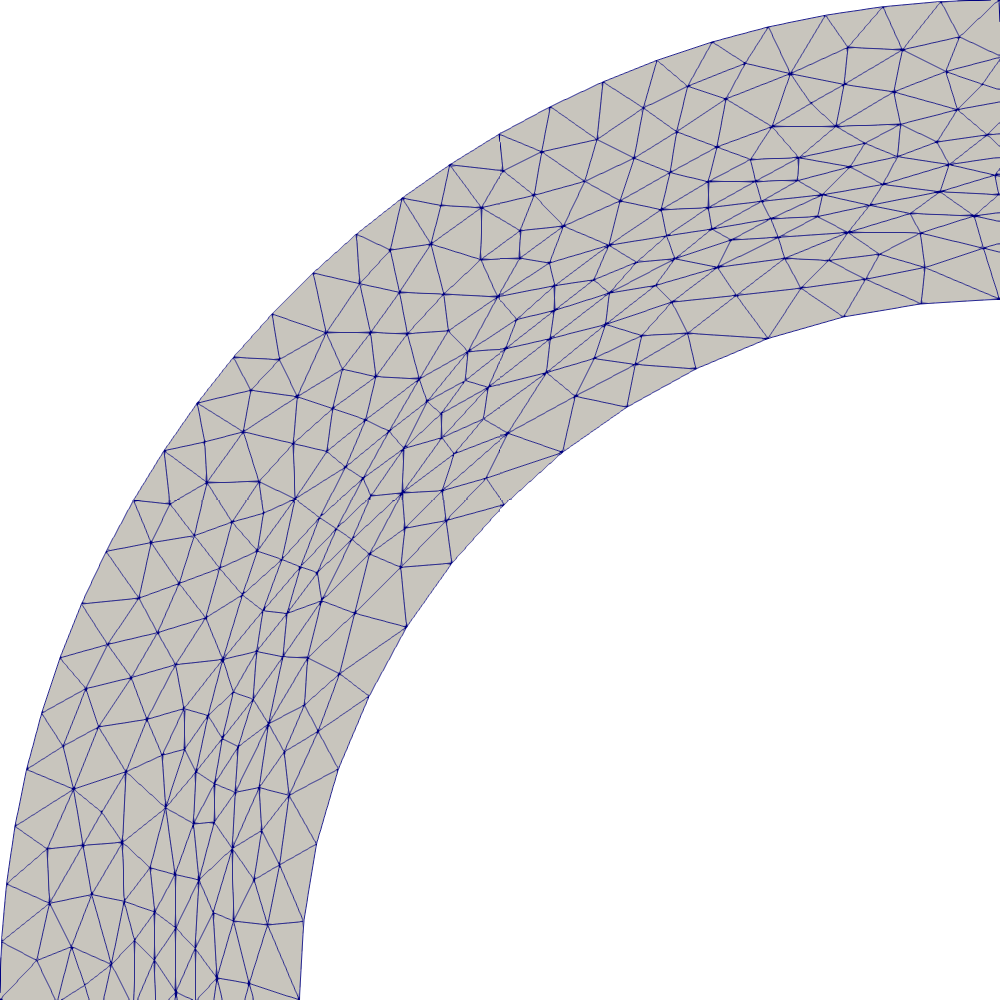}
    \includegraphics[width=0.32\textwidth]{./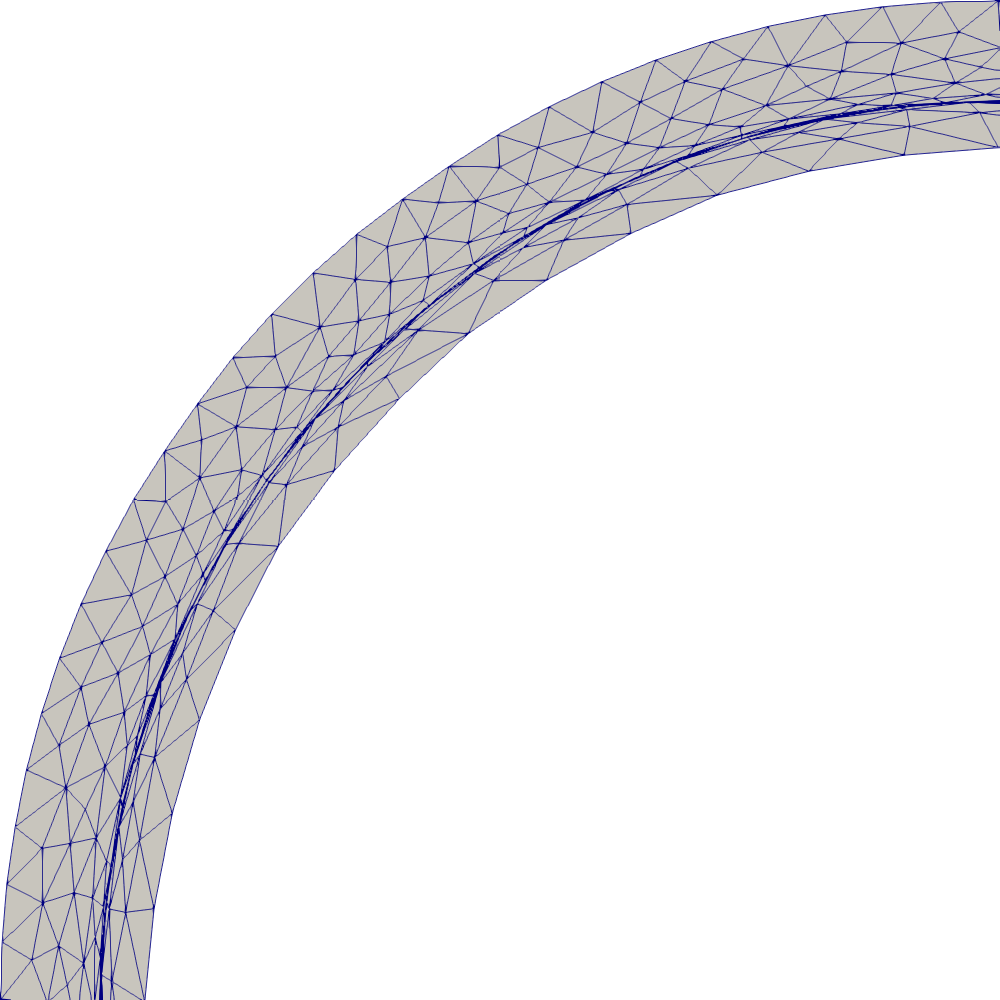}
    \caption{Left: initial shape $A(1, 1/2)$. Middle: $A(1, 0.7)$. Right: $A(1, 0.85)$. The two deformed annuli were obtained using the inner-product $(\cdot, \cdot)_{\CR(\alpha, \mu) + \mathring{H}(\sym)}$.}
    \label{fig:annulus-optimal-shapes}
\end{figure}

\subsection{Energy minimisation in slow flow}
We now consider a classical example in shape optimisation: the minimisation of dissipated energy in a fluid governed by Stokes' equations.
Let $D\subset\VR^2$ be a rectangular channel $(-3,+3)\times(-2,+2)$ with a circular obstacle at $(0,0)$ of radius $1/2$ as shown in Figure~\ref{fig:stokes-geometry}.
We denote the part of the channel without the obstacle by $\Omega := (-3,+3)\times(-2,+2) \setminus \overline{B_{1/2}(0)}$.
\begin{figure}[H]
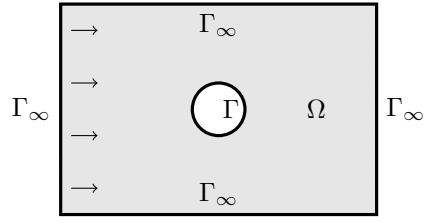

    \centering
    \includestandalone{./figures/domain-description-stokes}
    \caption{Computational domain for the BVP \eqref{eqn:bvp-stokes}.}
    \label{fig:stokes-geometry}
\end{figure}
Given a far-field velocity $\Vu_\infty$, the velocity field $\Vu$ and the pressure $p$ are then governed by the following boundary value problem:
\begin{equation}\label{eqn:bvp-stokes}
    \begin{aligned}
        - \Delta \Vu + \nabla p &=0 && \text{in } \Omega,\\
        \mathrm{div}(\Vu) &= 0 && \text{in } \Omega,\\
        \Vu &= \Vu_{\infty} && \text{on } \Gamma_\infty,\\
        \Vu &= 0 && \text{on } \Gamma.
    \end{aligned}
\end{equation}
We choose these boundary conditions over a more natural outflow condition for comparability with existing literature on energy minimisation in Stokes flow \cite{pironneau1974optimum, schulz2016computational}.
The corresponding weak form is given by: find $p\in L^2(\Omega)$ with $\int_{\Omega} p\dr x=0$ and $\Vu \in [H^1(\Omega)]^2$ with $\Vu\vert_{\Gamma_\infty} = \Vu_\infty$ and $\Vu\vert_{\Gamma} = 0$ such that 
\begin{equation}\label{eq:stokes_weak}
    \int_{\Omega} \partial \Vu : \partial \Vv - p\,\mathrm{div}\Vv + q\,\mathrm{div}\Vu \dr x = 0 
\end{equation}
for all $q\in L^2(\Omega)$ and $\Vv\in [H^1(\Omega)]^2$ with $\Vv\vert_{\Gamma_\infty\cup\Gamma}=0$.
The dissipated energy of \eqref{eq:stokes_weak} can then be calculated by the integral 
\begin{equation}\label{eq:cost_stokes}
    J(\Omega) =  \frac12 \int_{\Omega} \partial \Vu: \partial \Vu \dr x.
\end{equation}

\begin{proposition}
	The shape derivative of $J$ at $\Omega$ in direction $\VX\in [C^{0,1}(\overbar \Omega)]^2$, $\VX=0$ on $\Gamma_\infty$ is given by 
	\begin{equation}\label{eq:shape_derivative_stokes}
	\Dr J(\Omega)(\VX) = \int_{\Omega} \VS_1:\partial \VX \dr x, 
	\end{equation}
	where $(\Vu,p)$ solve \eqref{eq:stokes_weak} and $\VS_1$ is given by
	\begin{equation}\label{eq:formula_S1}
	\VS_1 = (\frac12\partial\Vu:\partial\Vu -p \Div(\Vu) ) \VI_2 + \partial\Vu^\top p - \partial \Vu^\top \partial \Vu.
	\end{equation}
\end{proposition}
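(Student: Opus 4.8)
The plan is to compute $\Dr J(\Omega)(\VX)$ by the perturbation-of-identity method: transport the state to the fixed domain $\Omega$, differentiate the transported energy, and then eliminate the material derivative of the state by testing the weak Stokes equation \eqref{eq:stokes_weak}. No nontrivial adjoint state is needed: since the cost is (half) the Dirichlet energy and $(\Vu,p)$ solves the symmetric saddle-point problem \eqref{eq:stokes_weak}, the adjoint velocity vanishes, which is precisely why the formula \eqref{eq:formula_S1} for $\VS_1$ involves only $\Vu$ and $p$.

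\emph{Transport.} For $|t|$ small, $\VT_t=\id+t\VX$ is a bi-Lipschitz perturbation of the identity fixing $\Gamma_\infty$, so the Stokes problem on $\Omega_t:=\VT_t(\Omega)$ is well posed with an inf-sup constant bounded below uniformly in $t$; let $(\Vu_t,p_t)$ be its solution and set $\Vu^t:=\Vu_t\circ\VT_t$, $p^t:=p_t\circ\VT_t$ on $\Omega$. Using $(\partial\Vu_t)\circ\VT_t=\partial\Vu^t\,(D\VT_t)^{-1}$ and the change of variables formula,
\[
  J(\Omega_t)=\tfrac12\int_\Omega\bigl(\partial\Vu^t(D\VT_t)^{-1}\bigr):\bigl(\partial\Vu^t(D\VT_t)^{-1}\bigr)\,\det(D\VT_t)\dr x .
\]
The transported weak form for $(\Vu^t,p^t)$ has coefficients depending smoothly on $t$ through $D\VT_t$ and $\det D\VT_t$, so the implicit function theorem (using the uniform inf-sup stability) provides a material derivative $\dot\Vu:=\frac{d}{dt}\Vu^t\big|_{t=0}\in[H^1(\Omega)]^2$; since $\Vu_t$ equals the fixed datum $\Vu_\infty$ on $\Gamma_\infty$ (where $\VX=0$) and vanishes on the moving obstacle boundary $\Gamma$, the trace of $\dot\Vu$ vanishes on all of $\partial\Omega$.

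\emph{Differentiate and reduce.} Using $\frac{d}{dt}(D\VT_t)^{-1}\big|_{t=0}=-\partial\VX$ and $\frac{d}{dt}\det(D\VT_t)\big|_{t=0}=\Div\VX$, differentiating the displayed identity at $t=0$ gives
\[
  \Dr J(\Omega)(\VX)=\int_\Omega\partial\Vu:\partial\dot\Vu\dr x-\int_\Omega\partial\Vu:(\partial\Vu\,\partial\VX)\dr x+\tfrac12\int_\Omega(\partial\Vu:\partial\Vu)\,\Div\VX\dr x .
\]
Because $\dot\Vu$ has zero boundary trace it is an admissible test function in the momentum part of \eqref{eq:stokes_weak}, whence $\int_\Omega\partial\Vu:\partial\dot\Vu\dr x=\int_\Omega p\,\Div\dot\Vu\dr x$. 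Differentiating the transported incompressibility constraint $\tr\bigl(\partial\Vu^t(D\VT_t)^{-1}\bigr)=0$ at $t=0$ yields $\Div\dot\Vu=\tr(\partial\Vu\,\partial\VX)=\partial\Vu^\top:\partial\VX$, so the first term becomes $\int_\Omega(p\,\partial\Vu^\top):\partial\VX\dr x$. Finally the elementary identities $\partial\Vu:(\partial\Vu\,\partial\VX)=(\partial\Vu^\top\partial\Vu):\partial\VX$ and $(\partial\Vu:\partial\Vu)\Div\VX=\bigl((\partial\Vu:\partial\Vu)\VI_2\bigr):\partial\VX$, together with $\Div\Vu=0$ (which permits the cosmetic insertion of $-p\,\Div\Vu\,\VI_2$ and the rewriting $p\,\partial\Vu^\top=\partial\Vu^\top p$), collect the three contributions into $\int_\Omega\VS_1:\partial\VX\dr x$ with $\VS_1$ as in \eqref{eq:formula_S1}.

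The main obstacle is the rigorous justification of the transport step, i.e. differentiability of $t\mapsto(\Vu^t,p^t)$ into $[H^1(\Omega)]^2\times L^2(\Omega)$; this rests on the uniform inf-sup/LBB stability of the transported Stokes system and on correctly handling the zero-mean normalisation of the pressure over the moving domain $\Omega_t$ (cleanest either in the pressure quotient space or with an additional scalar multiplier enforcing $\int_{\Omega_t}p_t\dr x=0$). Everything downstream is bookkeeping — tracking which boundary pieces move, and the index manipulations identifying $p\,\partial\Vu^\top$ and $\partial\Vu^\top\partial\Vu$. One may alternatively avoid material derivatives altogether and invoke the averaged-adjoint Lagrangian calculus of \cite{LaSt16}: the adjoint velocity being zero then makes the partial (shape) derivative of the Lagrangian immediate and reproduces the same $\VS_1$.
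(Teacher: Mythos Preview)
Your proof is correct, but it follows a somewhat different route than the paper. The paper sets up the transported weak form exactly as you do, then packages the energy and the constraint into a parametrised Lagrangian $G(t,\Vv,q)$ and invokes general results on differentiating such Lagrangians (the averaged-adjoint / C\'ea-type framework, citing \cite{sturm2,ITKUGU08,sturm4}) to conclude directly that $\Dr J(\Omega)(\VX)=\partial_t G(0,\Vu,p)$; the tensor $\VS_1$ is then read off from this partial derivative. This sidesteps the material derivative $\dot\Vu$ entirely and hence also the regularity issue you flag, at the cost of relying on an external theorem. Your argument, by contrast, is self-contained: you compute $\dot\Vu$'s contribution explicitly, eliminate it by testing the momentum equation and differentiating the transported incompressibility constraint, and then collect terms algebraically. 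This is more elementary and makes the self-adjointness of the problem (vanishing adjoint velocity) fully explicit, but it requires the differentiability of $t\mapsto(\Vu^t,p^t)$ in $[H^1(\Omega)]^2\times L^2(\Omega)$, which you rightly identify as the delicate step. You also correctly note at the end that the Lagrangian route is an alternative---that is in fact the route the paper takes.
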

\begin{proof}
Let $\VX\in [C^{0,1}(\overbar \Omega)]^2$ with $\VX=0$ on $\Gamma_{\infty}$ be a given vector field . We set $\VT_t := \id + t\VX$ and $\Omega_t := \VT_t(\Omega)$ and denote by $(\Vu_t, p_t)$ the solution of \eqref{eq:stokes_weak} where $\Omega$ is replaced by $\Omega_t$:
 $p_t\in L^2(\Omega_t)$, $\int_{\Omega_t} p_t \dr x=0$ and $\Vu_t \in [H^1(\Omega_t)]^2$ with $\Vu_t\vert_{\Gamma_\infty} = \Vu_\infty$ and $\Vu_t\vert_{\Gamma} = 0$ such that 
 \begin{equation}\label{eq:stokes_weak_per}
 \int_{\Omega_t} \partial \Vu_t : \partial \Vv - p_t\,\mathrm{div}\Vv + q\,\mathrm{div}\Vu_t \dr x = 0
 \end{equation}
for all $q\in L^2(\Omega_t)$ and $\Vv\in [H^1(\Omega_t)]^2$. Invoking a change of variables in \eqref{eq:stokes_weak_per} shows that $(\Vu^t,p^t) := (\Vu_t\circ \VT_t, p_t\circ \VT_t)$ satisfy
 \begin{equation}\label{eq:stokes_weak_per_ut}
 \int_{\Omega_t} \Det(\partial \VT_t)  \left(\partial \VT_t^{-1}\partial \Vu^t : \partial \VT_t^{-1}\partial \Vv - p^t\,\tr(\partial\Vv \partial \VT_t^{-1}) + q\,\tr(\partial \Vu^t \partial \VT_t^{-1})\right) \dr x = 0
 \end{equation}
 for all $q\in L^2(\Omega)$ and $\Vv\in [H^1(\Omega)]^2$. Here we used that $(\partial \Vv)\circ \VT_t = \partial (\Vv\circ \VT_t)(\partial \VT_t)^{-1}$ and 
 $\Div(\Vv)\circ \VT_t = \tr(\partial(\Vv\circ \VT_t)(\partial \VT_t)^{-1})$.
 Hence the parametrised Lagrangian associated with \eqref{eq:stokes_weak_per_ut} reads
 \begin{equation}
 \begin{split}
 G(t, \Vv, q) = & \frac12\int_{\Omega} \Det(\partial \VT_t) \partial \Vv(\partial \VT_t)^{-1}:\partial \Vv (\partial \VT_t)^{-1} \; \dr x\\
 & - \int_{\Omega} \Det(\partial \VT_t)  q \tr(\partial\Vv (\partial \VT_t)^{-1})\; \dr x.
 \end{split}
 \end{equation} 
Observe that taking the partial derivative of $G(t,\cdot,\cdot)$ gives the system \eqref{eq:stokes_weak_per_ut}.   Now using one of the many available methods to differentiate parametrised Lagrangians, see e.g. \cite{sturm2, ITKUGU08}, and \cite{sturm4} for an overview, we can show that
 \begin{equation}
 \Dr J (\Omega)(\VX) = \frac{d}{dt}G(t,\Vu^t,0)\Bigr|_{t=0} = \partial_t G(0, \Vu,p). 
 \end{equation}
 It can readily be checked that the derivative 
 $\partial_t G(0,\Vu, p)$ coincides indeed with the right hand side of \eqref{eq:shape_derivative_stokes}. 
 
\end{proof}

\begin{remark}
    As shown in \cite{LaSt16} assuming that $\Vu$ and $p$ are smooth enough, 
    we can retrieve the boundary form of the shape derivative from \eqref{eq:shape_derivative_stokes} by
    \ben
        \Dr J(\Omega)(\VX) = \int_{\Gamma} \VS_1\nubf \cdot \nubf (\VX\cdot \nubf) \dr s. 
    \een
Since $\Vu=0$ on $\Gamma$ the tangential gradient of $u_i$ vanishes on $\Gamma$ and hence 
we have $\nabla u_i\cdot \nabla u_i = \partial_{\nubf} u_i \partial_{\nubf} u_i$. Moreover 
we have $\Div^\tau(\Vu) := \Div(\Vu) - \partial \Vu\nubf \cdot \nubf=0$ on $\Gamma$. 
Hence recalling $\partial \Vu^\top \partial \Vu = \nabla u_1\otimes \nabla u_1 + \nabla u_2 \otimes \nabla u_2$, we obtain 
using \eqref{eq:formula_S1},
\ben
\begin{split}
    \VS_1\nubf \cdot \nubf & = \frac12 (\nabla u_1 \cdot \nabla u_1 + \nabla u_2 \cdot \nabla u_2)  - p \underbrace{\Div^\tau(\Vu)}_{=0} - (\nabla u_1\otimes \nabla u_1 + \nabla u_2 \otimes \nabla u_2)\nubf\cdot\nubf\\
& =-\frac12 ( (\partial_{\nubf} u_1)^2 + (\partial_{\nubf} u_2)^2).
\end{split}
\een
This boundary form coincides with the one stated in \cite[Prop.~2.13]{mohammadi2010applied}.
\end{remark}

We require the volume and the barycentre of the obstacle $\Omega_o := D\setminus\overbar\Omega$ to remain constant; these constraints are enforced using an augmented Lagrangian method.
For a more detailed treatment of the Stokes problem with these constraints we refer to \cite{schulz2016computational, paganini2017higher}.

The optimal shape for this problem is the well known ogive \cite{pironneau1974optimum} and depicted in Figure~\ref{fig:optimal-shape-stokes}.
Mesh deformation methods often struggle with this problem at the leading and trailing edge, as creating the sharp tips represents a significant deformation.
Due to this sharp corner we do not consider RKHS for this example, as these functions are differentiable and thus not appropriate here.
We define $\mu$ as before in \eqref{eqn:weighting-function-distance} and consider the inner-product
\begin{equation}
    (\Vu,\Vv)_{\CR(\alpha,\mu)+H(\sym)} \defeq \frac1\alpha(\mu\Cb\Vu,\mu\Cb\Vv)_{[L^2]^2} + (\sym(\partial\Vu), \sym(\partial\Vv))_{[L^2]^{2\times2}}.
\end{equation}
The deformations are discretised using P1 finite elements;
furthermore, we discretise the state equation using a P1 element for the pressure field and a mini-element for the velocity field \cite[p.~322]{girault2012finite}.

The obtained flow-fields for the initial and the optimal shape are presented in Figure~\ref{fig:optimal-shape-stokes} and the final mesh is shown in Figure~\ref{fig:optimal-shape-stokes-mesh}.
Due to Lemma~\ref{lem:non-existence-clamped-cr} we know that there is no conformal mapping between the initial and the optimal shape.
However, using the weighting function as proposed in \eqref{eqn:weighting-function-distance} we are able to distribute the non-conformality over the entire volume of the mesh and hence still obtain a very high mesh quality.
As in the levelset example, the nearly conformal mappings achieve this by shrinking the elements where the deformation is largest; this can be seen especially well near the two tips.
\begin{figure}[htbp]
    \centering
    \includegraphics[width=0.49\textwidth]{./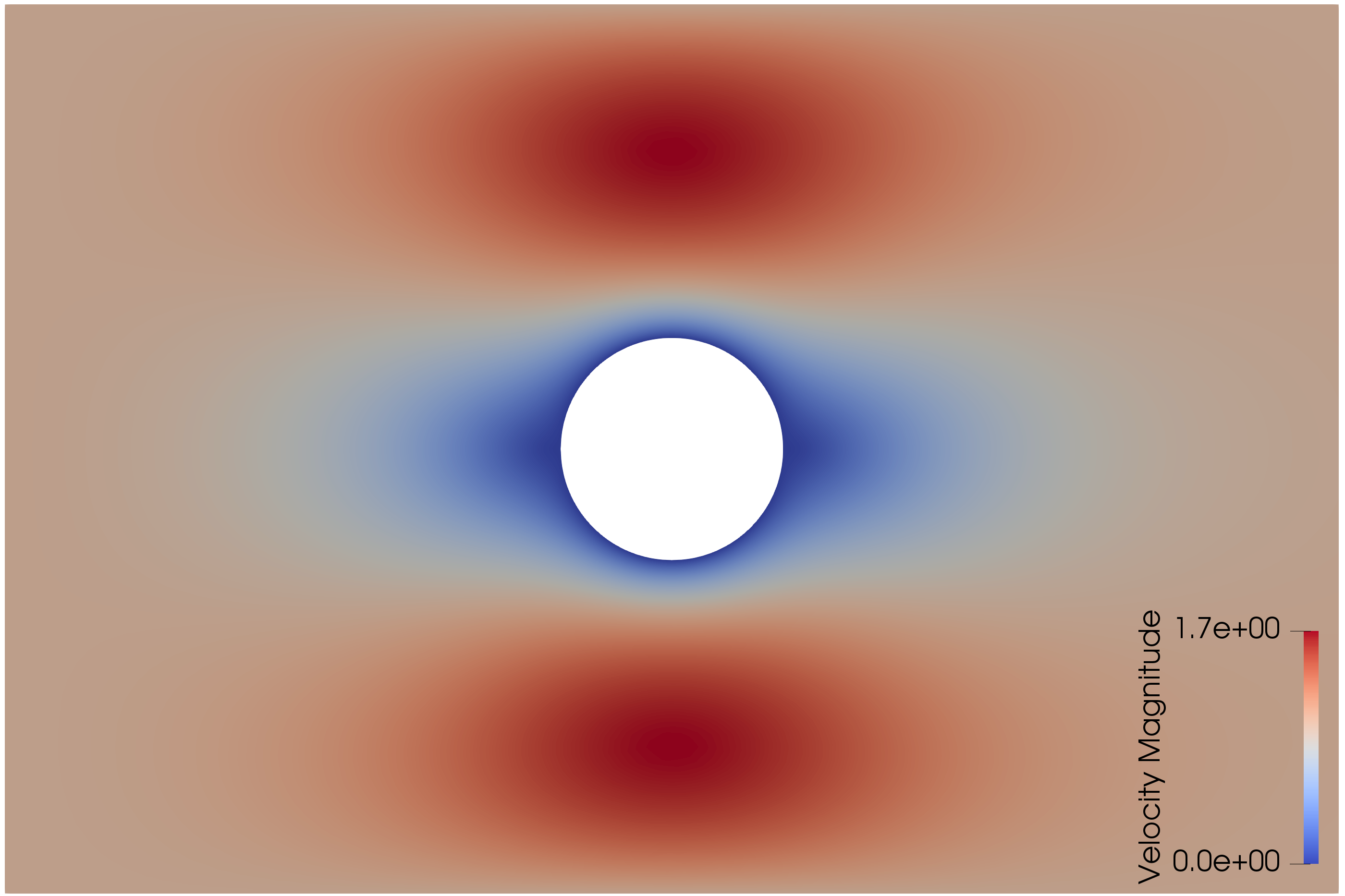}
    \includegraphics[width=0.49\textwidth]{./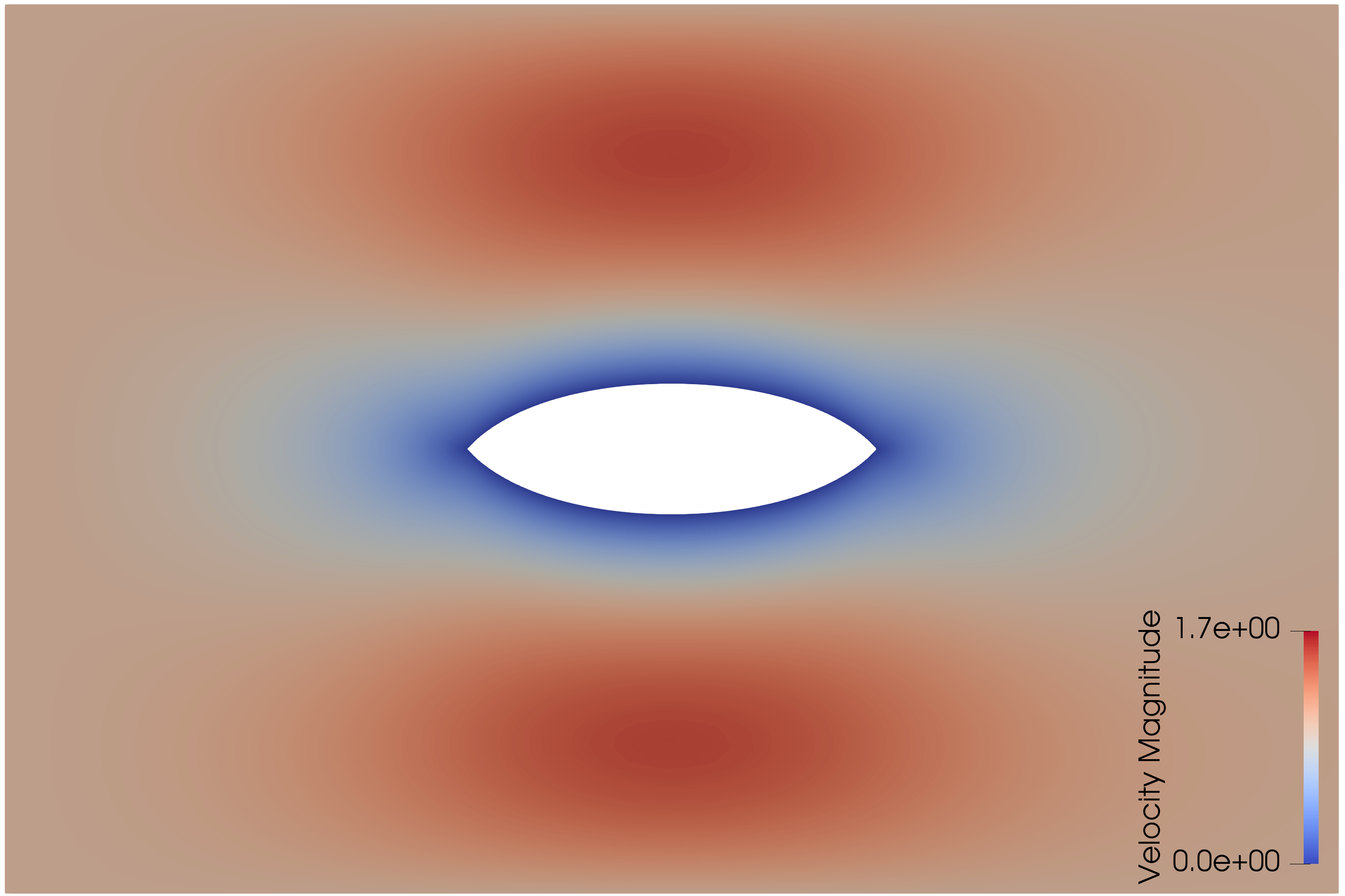}
    \caption{Initial shape and optimal shape for the Stokes energy minimisation problem subject to volume and barycentre constraints.}
    \label{fig:optimal-shape-stokes}
\end{figure}

\begin{figure}[htbp]
    \centering
    \includegraphics[width=0.49\textwidth]{./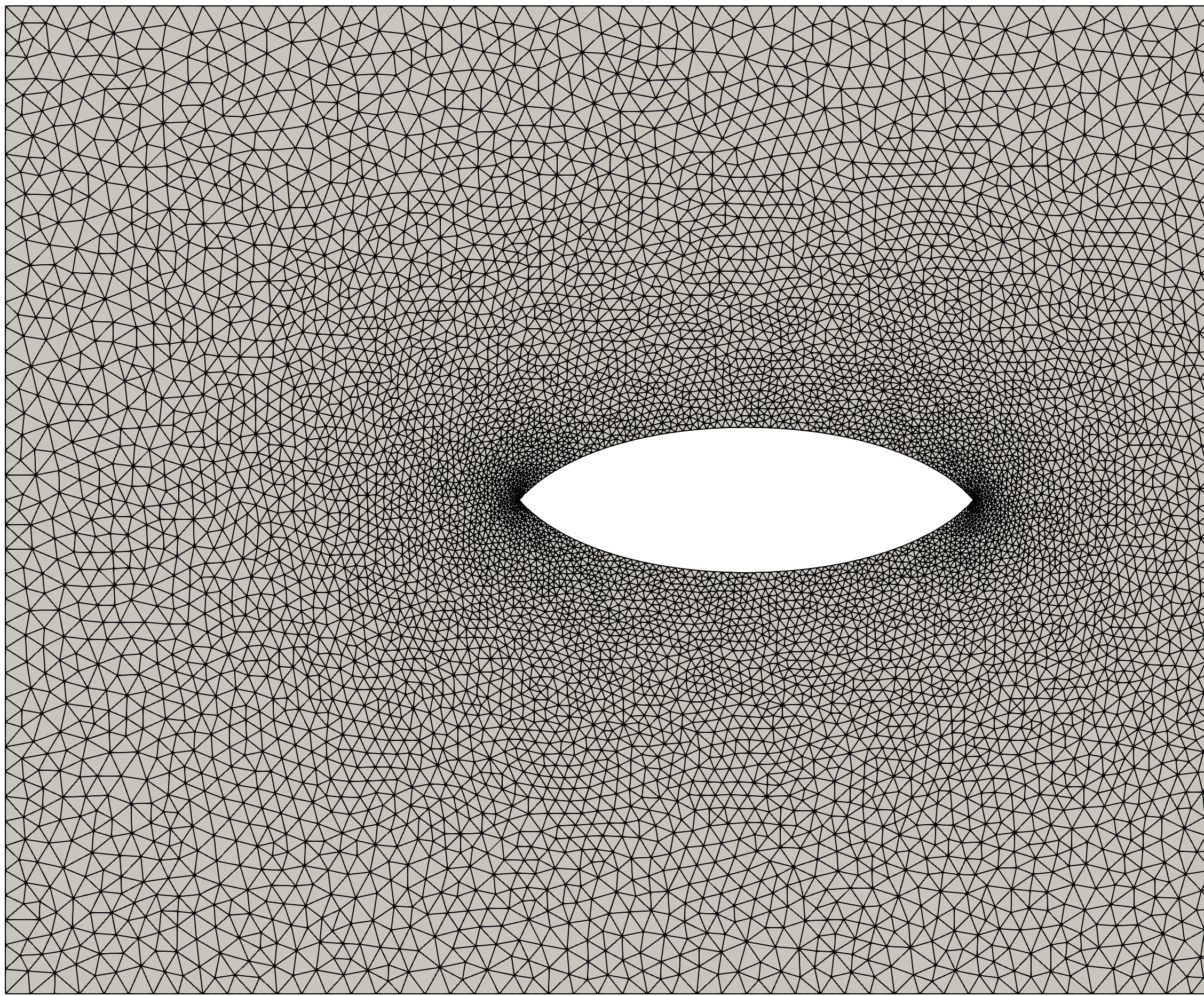}
    \includegraphics[width=0.49\textwidth]{./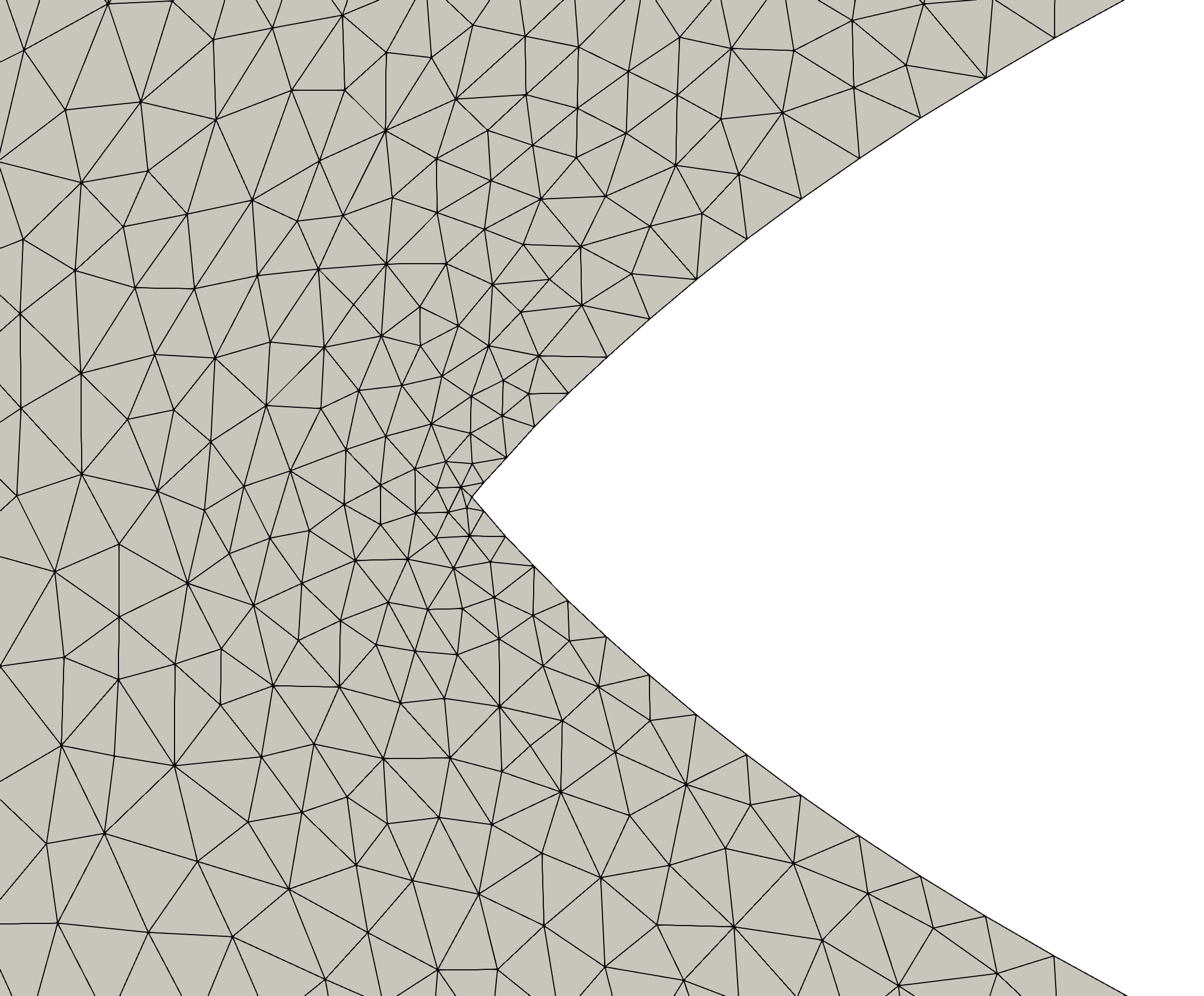}
    \caption{Mesh obtained using the inner-product $(\cdot,\cdot)_{\CR(10^{-3},\mu)+\mathring{H}(\sym)}$; Left: zoom out of the mesh; Right: close-up of the tip of the rugby shape.}
    \label{fig:optimal-shape-stokes-mesh}
\end{figure}
\FloatBarrier

\section{Summary and future work}

In this work we illustrate how to use conformal mappings for shape optimisation. 
Our approach is based on modifying the existing inner-product on a Hilbert space which yields shape gradients that are nearly holomorphic.
We do this for both the classical $[H^1(\Omega)]^2$ Sobolev space, as well as for reproducing kernel Hilbert spaces $[\Ch(\Omega)]^2$.

Our numerical examples suggest that if a conformal mapping between the initial and the optimal shape exists, then the gradients based on the new inner-products lead to deformations that have essentially no influence on the mesh quality.
We observe that in that case the proposed method significantly outperforms gradients purely based on the standard inner-product in $[H^1(\Omega)]^2$, elasticity based inner-products or the inner-product induced by a reproducing kernel.

In the case when there is no such mapping, we propose a weighting function which leads to deformations that emphasize mesh quality near the boundary.
We show this for the classical Stokes energy minimisation example.

We consider only two dimensional problems in this paper as the Cauchy-Riemann equations are an inherently two dimensional concept.
However, we briefly outline how one could attempt to extend the work presented here to higher dimensions.
In general, a mapping $\VT:\VR^d\to\VR^d$ is angle preserving if it satisfies  the system of  nonlinear equations \cite[Section 2.1]{iwaniec2001geometric}
\begin{equation}\label{eqn:conformal-higher-dim}
    \Dr\VT^\top\Dr\VT = (\mathrm{det}(\Dr \VT))^{2/d} \VI_d.
\end{equation}
One can show (see, e.g., \cite{iwaniec2001geometric})  that this equation reduces to the Cauchy-Riemann equations when $d=2$. 
For $d\ge 3$ however, this equation is highly non-linear and furthermore overdetermined, which in turn implies that the exact solutions for \eqref{eqn:conformal-higher-dim} are given by M\"obius transformations.
The powerful statement of the Riemannian mapping theorem hence does not extend to higher dimensions. 
We can however still hope to obtain nearly conformal mappings. 
This could be done by either adding a term of the form $\|\Dr\VT^\top\Dr\VT - (\mathrm{det}(\Dr \VT))^{2/d} \VI_d\|$ to the shape function or by linearizing \eqref{eqn:conformal-higher-dim} around $\VT=\id$ and then including it into the inner-product.
We leave this as work for future research.
\section*{Acknowledgement}
The authors thank Patrick Farrell for helpful discussions and his detailed comments on the manuscript.
\newpage

\section*{Appendix}
\subsection{Traces of functions in vvRKHS}	
Let $\Omega_1\subset \Omega_2\subset \VR^2$ be two sets. Suppose that $\ksf(\Vx,\Vy)=\phi(|\Vx-\Vy|)$ is a positive definite kernel, where $\phi\in C(\Omega_2\times \Omega_2)$. In \cite[Thm.~10.46, p.~169]{Wendlandbook} it is shown that there is an extension operator $E: \Ch(\Omega_1) \to \Ch(\Omega_2)$ having the property $\|Ef\|_{\Ch(\Omega_2)} = \|f\|_{\Ch(\Omega_1)}$. 

Conversely, in \cite[Thm.~10.47, p.~170]{Wendlandbook} it is shown that the restriction of $f\in \Ch(\Omega_2)$ to $\Omega_2$ belongs to $\Ch(\Omega_1)$ and that $\|f|_{\Omega_1}\|_{\Ch(\Omega_1)}\le \|f\|_{\Ch(\Omega_2)}$. 

With these two ingredient we define the trace operator as follows. 
\begin{definition}
The trace operator $T:\Ch(\Omega) \to \Ch(\partial\Omega)$ is defined by 
 $T:= R\circ E$, where $E:\Ch(\Omega) \to \Ch(\overbar\Omega)$ denotes the extension operator and $R:\Ch(\overbar\Omega) \to \Ch(\partial\Omega)$ denotes
 the restriction operator. 
\end{definition}

\begin{remark}
    Since the trace operator is continuous and linear it is also weakly continuous; see \cite[Thm.~2.10, p.~61]{BR2010}
\end{remark}

\subsection{Proof of Proposition~\ref{prop:solution-decomposition-RKHS}}

\begin{proof} 
    (i)    First observe that the first order optimality condition is equivalent to: find $\Vu_\alpha^{(1)}\in V$ and $\Vu_\alpha^{(2)}\in V^\bot$, such that
    \ben\label{eq:opt_cond_rkhs}
\begin{split}
  \frac1n \sum_{\ell=1}^n\Cb \Vu_\alpha^{(2)}(\Vp_\ell)\cdot \Cb\Vv^{(2)}(\Vp_\ell) & + \alpha(\Vu_\alpha^{(1)},\Vv^{(1)})_{[\Ch]^2} + \alpha(\Vu_\alpha^{(2)},\Vv^{(2)})_{[\Ch]^2}\\
  &= \alpha (\varphibf_{\Vg}^{(1)},\Vv^{(1)})_{[\Ch]^2} + \alpha (\varphibf_{\Vg}^{(2)},\Vv^{(2)})_{[\Ch]^2}
\end{split}
\een
for all $\Vv^{(1)}\in V$ and all  $\Vv^{(2)} \in V^\perp$.  Testing  \eqref{eq:opt_cond_rkhs} with $\Vv^{(2)}=0$, we get
\ben
(\Vu_\alpha^{(1)}-\varphibf_{\Vg}^{(1)},\Vv^{(1)})_{[\Ch]^2}=0 \quad \text{ for all } \Vv^{(1)}\in V.
\een
As a result $\Vu_\alpha^{(1)}-\varphibf_{\Vg}^{(1)}\in V^\bot\cap V = \{0\}$ and it follows $\Vu_\alpha^{(1)}=\varphibf_{\Vg}^{(1)}$. 

(ii)-(iii)
Using  $\Vv^{(1)}=0$ as test function in \eqref{eq:opt_cond_rkhs} and taking into account \eqref{eq:grad_RKHS}, we obtain \eqref{eq:u2-characterisation_H_RKHS}. Moreover, using $\Vv^{(2)}=\Vu_\alpha^{(2)}$ as test function in \eqref{eq:u2-characterisation_H_RKHS} and estimating the right hand side yields \eqref{eq:estimate_kernel}.

(iv) By (ii), for every null-sequence $(\alpha_n)$, there exists $\Vu_0^{(2)}\in V^\bot$ and a subsequence $(\alpha_{n_k})$ of $(\alpha_n)$ such that $\Vu_{\alpha_{n_k}}^{(2)} \rightharpoonup \Vu_0^{(2)}$ weakly in $[\Ch(\Omega)]^2$. Therefore using \eqref{eq:u2-characterisation_H_RKHS} we find 
\ben
\begin{split}
\frac1n\sum_{\ell=1}^n |\Cb\Vu_0^{(2)}(\Vp_\ell)|^2 & =  \lim_{k\to \infty} \frac1n\sum_{\ell=1}^n \Cb\Vu_\alpha^{(2)}(\Vp_\ell)\cdot \Cb\Vu^{(2)}_0(\Vp_\ell)\\
 &= \lim_{k\to \infty} - \alpha_{n_k} ( \Vu_{\alpha_{n_k}}^{(2)}, \Vu^{(2)}_0)_{[\Ch]^2} + \alpha_{n_k} \int_{\partial \Omega} \Vg\cdot \Vu^{(2)}_0 \dr s = 0.
\end{split}
\een
This shows that $\Cb\Vu_0^{(2)}(\Vp_\ell)=0$ for $\ell=1,\ldots, n$, which means $\Vu_0^{(2)}\in V$, but since $\Vu_0^{(2)}\in V^\bot$ we conclude $\Vu_0^{(2)}=0$. Since the null-sequence $(\alpha_n)$ was arbitrary we get $\Vu_\alpha^{(2)} \rightharpoonup 0$ in $[\Ch(\Omega)]^2$.  

Testing \eqref{eq:u2-characterisation_H_RKHS} with $\Vv^{(2)} = \Vu_\alpha^{(2)}$ we obtain 
\ben\label{eq:estimate_B_kernel}
\lim_{\alpha \to 0}\frac1n\sum_{\ell=1}^n |\Cb\Vu_\alpha^{(2)}(\Vp_\ell)|^2 =  \lim_{\alpha\to 0} - \alpha ( \Vu_{\alpha}^{(2)}, \Vu^{(2)}_\alpha)_{[\Ch]^2} + \alpha \int_{\partial \Omega} \Vg\cdot \Vu^{(2)}_\alpha \dr s = 0,
\een
since $\Vu_\alpha^{(2)}$ is bounded in $[\Ch(\Omega)]^2$. This shows that $ |\Cb\Vu_\alpha^{(2)}(\Vp_\ell)| \to 0$ for $\ell=1,\ldots, n$ as $\alpha\to0$. 

Testing again \eqref{eq:u2-characterisation_H_RKHS} with $\Vv^{(2)} = \Vu_\alpha^{(2)}$ and estimating the left hand side yields
\ben\label{eq:estimate_B_kernel-2}
\begin{split}
\alpha \frac1n\sum_{\ell=1}^n |\Cb\Vu_\alpha^{(2)}(\Vp_\ell)|^2 + \alpha\|\Vu_\alpha^{(2)}\|_{[\Ch]^2}^2 & \le \frac1n\sum_{\ell=1}^n |\Cb\Vu_\alpha^{(2)}(\Vp_\ell)|^2 + \alpha\|\Vu_\alpha^{(2)}\|_{[\Ch]^2}^2\\
& = \alpha  \int_{\partial \Omega} \Vg\cdot \Vu_\alpha^{(2)}\dr s
\
\end{split}
\een
which in view of \eqref{eq:estimate_B_kernel} shows that
\ben
 \lim_{\alpha \to 0}\|\Vu_\alpha^{(2)}\|_{[\Ch]^2}^2   \le  - \lim_{\alpha \to 0} \frac1n\sum_{\ell=1}^n |\Cb\Vu_\alpha^{(2)}(\Vp_\ell)|^2  +  \lim_{\alpha \to 0} \int_{\partial \Omega} \Vg\cdot \Vu_\alpha^{(2)}\dr s = 0.
\een
This shows the strong convergence of  $(\Vu_\alpha^{(2)})$ to zero and hence $\Vu_\alpha$ converges strongly to $\varphibf_\Vg^{(1)}$ as $\alpha \to0$.
It also readily follows from \eqref{eq:estimate_B_kernel-2} that 
$\lim_{\alpha \to0}\max_{\Vp\in \Cp_n} \frac{1}{\sqrt\alpha} |\Cb\Vu_\alpha^{(2)}(\Vp)| =0$. 

(v) 

Recall that $\Vu_\alpha^{(2)}$ is bounded in $V^\bot$ uniformly in $\alpha$ and hence for every sequence $(\alpha_n)$ with $\alpha_n\to \infty$,  we find $\overbar{\Vu}^{(2)}\in V^\bot$ and a subsequence $(\alpha_{n_k})$ of $(\alpha_n)$ such that $\Vu_{\alpha_{n_k}}^{(2)}\to \overbar{\Vu}$. 
Hence choosing $\alpha=\alpha_{n_k}$ in \eqref{eq:u2-characterisation_H_RKHS} and then passing to the limit $k\to\infty$ yields
\ben\label{eq:u_infty}
(\bar\Vu, \Vv^{(2)})_{[\Ch]^2} = \int_{\partial \Omega} \Vg\cdot \Vv^{(2)}\dr s
\een
for all $\Vv^{(2)} \in V^\bot$. Since \eqref{eq:u_infty} admits a unique solution we conclude
that $\bar \Vu = \varphibf_\Vg^{(2)}$. 
\end{proof}

\bibliographystyle{siamplain}
\bibliography{refs}

\begin{thebibliography}{10}

\bibitem{BE05}
{\sc H.~Begehr}, {\em {Boundary value problems in complex analysis I}},
  Bolet\'in de la Asociaci\'on Matem\'etica Venezolana, XII (2005).

\bibitem{BR2010}
{\sc H.~Brezis}, {\em Functional analysis, Sobolev spaces and partial
  differential equations}, Universitext, Springer New York, 2010.

\bibitem{CO78}
{\sc J.~Conway}, {\em Functions of one complex variable I}, Functions of one
  complex variable / John B. Conway, Springer, 1978.

\bibitem{DEZO11}
{\sc M.~C. Delfour and J.-P. Zol\'esio}, {\em Shapes and geometries: metrics,
  analysis, differential calculus, and optimization}, Society for Industrial
  and Applied Mathematics (SIAM), second~ed., 2011.

\bibitem{DemDem12}
{\sc F.~Demengel and G.~Demengel}, {\em Functional spaces for the theory of
  elliptic partial differential equations}, Universitext, Springer, London; EDP
  Sciences, Les Ulis, 2012.

\bibitem{dwight2009robust}
{\sc R.~P. Dwight}, {\em {Robust mesh deformation using the linear elasticity
  equations}}, Computational Fluid Dynamics 2006,  (2009), pp.~401--406.

\bibitem{EigelSturm17}
{\sc M.~Eigel and K.~Sturm}, {\em {Reproducing kernel Hilbert spaces and
  variable metric algorithms in PDE-constrained shape optimization}},
  Optimization Methods and Software,  (2017), pp.~1--29.

\bibitem{ESSI2007}
{\sc K.~Eppler, S.~Schmidt, V.~Schulz, and C.~Ilic}, {\em Preconditioning the
  pressure tracking in fluid dynamics by shape {H}essian information}, Journal
  of Optimization Theory and Applications, 141 (2009), pp.~513--531.

\bibitem{gamelin2003complex}
{\sc T.~Gamelin}, {\em Complex analysis}, Springer Science \& Business Media,
  2003.

\bibitem{girault2012finite}
{\sc V.~Girault and P.-A. Raviart}, {\em Finite element methods for
  Navier-Stokes equations: theory and algorithms}, vol.~5, Springer Science \&
  Business Media, 2012.

\bibitem{GraMor78}
{\sc J.~D. Gray and S.~A. Morris}, {\em When is a function that satisfies the
  {C}auchy-{R}iemann equations analytic?}, Amer. Math. Monthly, 85 (1978),
  pp.~246--256.

\bibitem{HE03}
{\sc B.~T. Helenbrook}, {\em Mesh deformation using the biharmonic operator},
  International Journal for Numerical Methods in Engineering, 56 (2003),
  pp.~1007--1021.

\bibitem{ITKUGU08}
{\sc {Ito, Kazufumi}, {Kunisch, Karl}, and {Peichl, Gunther H.}}, {\em
  Variational approach to shape derivatives}, ESAIM: Control, Optimisation and
  Calculus of Variations, 14 (2008), pp.~517--539.

\bibitem{IWKOON11}
{\sc T.~Iwaniec, L.~V. Kovalev, and J.~Onninen}, {\em The {N}itsche
  conjecture}, J. Amer. Math. Soc., 24 (2011), pp.~345--373.

\bibitem{iwaniec2001geometric}
{\sc T.~Iwaniec and G.~Martin}, {\em Geometric function theory and non-linear
  analysis}, Clarendon Press, 2001.

\bibitem{KAetal2016}
{\sc K.~G. K., P.-K.~E. M., S.~T., V.~E. de, G.~K. C., and O.~C.}, {\em Adjoint
  optimization for vehicle external aerodynamics}, International Journal of
  Automotive Engineering, 7 (2016), pp.~1--7.

\bibitem{KonOle88}
{\sc V.~A. Kondrat'ev and O.~A. Ole{\u i}nik}, {\em Boundary value problems for
  a system in elasticity theory in unbounded domains. {K}orn inequalities},
  Russian Math. Surveys, 43 (1988), pp.~65--119,
  \url{https://doi.org/10.1070/RM1988v043n05ABEH001945}.

\bibitem{LaSt16}
{\sc A.~Laurain and K.~Sturm}, {\em Distributed shape derivative via averaged
  adjoint method and applications}, ESAIM: M2AN, 50 (2016), pp.~1241--1267.

\bibitem{mohammadi2010applied}
{\sc B.~Mohammadi and O.~Pironneau}, {\em Applied shape optimization for
  fluids}, Oxford university press, 2010.

\bibitem{paganini2017higher}
{\sc A.~Paganini, F.~Wechsung, and P.~E. Farrell}, {\em {Higher-order moving
  mesh methods for PDE-constrained shape optimization}}, arXiv preprint
  arXiv:1706.03117,  (2017).

\bibitem{pironneau1974optimum}
{\sc O.~Pironneau}, {\em On optimum design in fluid mechanics}, Journal of
  Fluid Mechanics, 64 (1974), pp.~97--110.

\bibitem{Rathgeber2016}
{\sc F.~Rathgeber, D.~A. Ham, L.~Mitchell, M.~Lange, F.~Luporini, A.~T.~T.
  Mcrae, G.-T. Bercea, G.~R. Markall, and P.~H.~J. Kelly}, {\em Firedrake:
  Automating the finite element method by composing abstractions}, ACM
  Transactions on Mathematical Software, 43 (2016), pp.~24:1--24:27,
  \url{https://arxiv.org/abs/1501.01809}.

\bibitem{ridzal2014rapid}
{\sc D.~Ridzal and D.~P. Kouri}, {\em Rapid optimization library.}, tech.
  report, Sandia National Laboratories (SNL-NM), Albuquerque, NM (United
  States), 2014.

\bibitem{PhDSchmidtShape}
{\sc S.~Schmidt}, {\em Efficient large scale aerodynamic design based on shape
  calculus}, PhD thesis, University of Trier, Germany, 2010,
  \url{http://ubt.opus.hbz-nrw.de/frontdoor.php?source_opus=569&la=en}.

\bibitem{schmidt2014twostage}
{\sc S.~Schmidt}, {\em {A two stage CVT / Eikonal convection cesh deformation
  approach for large nodal deformations}},  (2014),
  \url{https://arxiv.org/abs/1411.7663}.

\bibitem{SCSCILGA2011}
{\sc S.~Schmidt, V.~Schulz, C.~Ilic, and N.~Gauger}, {\em Large scale
  aerodynamic shape optimization based on shape calculus}, in
  ECCOMAS-CFD\&Optimization, I.~H. Tuncer, ed., 2011-054, ISBN
  978-605-61427-4-1, 2011.

\bibitem{schottky1877konforme}
{\sc F.~Schottky}, {\em {Ueber die conforme Abbildung mehrfach
  zusammenhängender ebener Flächen}}, Journal für die reine und angewandte
  Mathematik, 83 (1877).

\bibitem{schulz2016computational}
{\sc V.~Schulz and M.~Siebenborn}, {\em {Computational comparison of surface
  metrics for PDE constrained shape optimization}}, {Computational Methods in
  Applied Mathematics}, 16 (2016), pp.~485--496.

\bibitem{SMER87}
{\sc R.~E. Smith and L.-E. Eriksson}, {\em Algebraic grid generation}, Computer
  Methods in Applied Mechanics and Engineering, 64 (1987), pp.~285 -- 300.

\bibitem{SOZO92}
{\sc J.~Soko{\l}owski and J.-P. Zol{\'e}sio}, {\em Introduction to shape
  optimization. Shape sensitivity analysis}, Springer, Berlin, 1992.

\bibitem{ST93}
{\sc S.~Steinberg}, {\em Fundamentals of grid generation}, Taylor \& Francis,
  1993.

\bibitem{sturm2}
{\sc K.~Sturm}, {\em Minimax {L}agrangian approach to the differentiability of
  nonlinear {PDE} constrained shape functions without saddle point assumption},
  SIAM Journal on Control and Optimization, 53 (2015), pp.~2017--2039.

\bibitem{sturm4}
{\sc K.~Sturm}, {\em Shape differentiability under non-linear {PDE}
  constraints}, in New trends in shape optimization, vol.~166 of International
  Series of Numerical Mathematics, Birkh\"auser/Springer, Cham, 2015,
  pp.~271--300.

\bibitem{Wendlandbook}
{\sc H.~Wendland}, {\em Scattered data approximation}, Cambridge University
  Press, Cambridge, 2005.

\bibitem{ZOHO89}
{\sc A.~Zochowski and P.~Holnicki}, {\em Interactive method of variational grid
  generation}, Journal of Computational and Applied Mathematics, 26 (1989),
  pp.~281 -- 287.

\end{thebibliography}

\end{document}